\newtheorem{theorem}{Theorem}[section]
\newtheorem{definition}[theorem]{Definition}
\newtheorem{lemma}[theorem]{Lemma}
\newtheorem{remark}[theorem]{Remark}
\newcommand{\ud}{\,\mathrm{d}}
\newcommand{\p}{\ensuremath{\partial}}
\newcommand{\n}{\ensuremath{\nonumber}}
\newcommand{\eps}{\ensuremath{\varepsilon}}
\newcommand{\bigO}{\mathcal{O}}
\newcommand{\mcf}{\mathcal{F}}
\def\mau{\mathbf{u}}
\def\dv{\mathrm{div}}
\let\f=\frac
\let\va=\varepsilon
\let\f=\frac
\def\dv{\mathrm{div}}
\def\curl{\mathop{\rm curl}\nolimits}
\begin{document}
\title{\textbf{Stability and related zero viscosity limit
of steady plane Poiseuille-Couette flows with no-slip boundary condition}}

\author{ {\bf Song $\text{Jiang}^\ddag$ \qquad\quad Chunhui $\text{Zhou}^\dag$\footnote{Corresponding author}}
\\[4mm]
\small $^\dag$ Department of Mathematics, Southeast University,
Nanjing, 210096, China.\\
\small Email: zhouchunhui@seu.edu.cn\\
\small $^\ddag$ Institute of Applied Physics and Computational Mathematics,\qquad\quad\quad\\
\small  Beijing, China. Email: jiang@iapcm.ac.cn}
\date{}
\maketitle

\begin{abstract}
We prove the existence and stability of smooth solutions to the steady Navier-Stokes equations near plane Poiseuille-Couette
flow. Consequently, we also provide the zero viscosity limit of the 2D steady Navier-Stokes equations to the steady Euler equations.
First, in the absence of any external force, we prove that there exist smooth solutions to the steady Navier-Stokes equations
which are stable under infinitesimal perturbations of plane Poiseuille-Couette flow. In particular, if the basic flow is the Couette flow,
then we can prove that the flow is stable for any finite perturbation $o(1)$. Moreover, we also show that for any smooth shear flow
satisfying (\ref{0.3}), if we put a proper external force to control the flow, then we can also obtain a smooth solution of
the steady Navier-Stokes equations which is stable for infinitesimal perturbation of the external force. Finally,
based on the same linear estimates, we establish the zero viscosity limit of all the solutions obtained above
to the solutions of the Euler equations.
\end{abstract}

\noindent {\bf Keywords:} Steady Navier-Stokes equations, stability of shear flows,
zero viscous limit, weak boundary layers, strong solutions.
\bigskip
\renewcommand{\theequation}{\thesection.\arabic{equation}}
\setcounter{equation}{0}

\section{Introduction}

Hydrodynamic stability has been recognized as one of the central problems in fluid mechanics. There have been plenty of works
including theoretical, experimental and numerical results on the stability characteristics of different classes of basic flows.
These include flows in channels, boundary layers, jets and shear layers.
Concerning the stability of viscous unsteady shear flows, the authors in  \cite{BGM,BGM1,CLWZ,CWZ,MZ,Romanov,WZ} show
that the  2D or 3D Couette flow and pipe Poiseuille flow are linear stable for infinitesimal perturbations at high Reynolds number.
In \cite{GGN1}, the authors prove that generic plane shear profiles other than the plane Couette
flow are linearly unstable for sufficiently large Reynolds number. More recently, the authors in \cite{LMZ} use a new energy method to prove the instability of plane Couette flow for some perturbation of size $\va^{\f12-\delta}$ with any small $\delta>0$.

In this paper, we shall study the stability of shear flows in the {\it steady setting}. First we prove that, in contrast with
the unsteady case, a steady plane Poiseuille flow is stable under infinitesimal perturbations in the absence of external forces.
In fact, we can also prove that the basic flows of plane Poiseuille-Couette family defined in (\ref{0.2}) are stable under infinitesimal perturbations. In particular, if the basic flow is Couette flow, then through a formal asymptotic expansion including the Euler correctors and weak boundary layer correctors, we can show that the flow is stable for any finite perturbation $o(1)$.
 Moreover, we also show that any shear flow $\mau_0=(\mu(y),0)$ satisfying (\ref{0.3}), which is a solution to the steady
 Navier-Stokes equations with an external force $\mathbf{f}_0=(-\mu'',0)$, is stable under an infinitesimal perturbation
 of the external force.
 Finally, based on the same linear estimates, we prove the zero viscosity limit of all the obtained solutions to the solutions of
  the Euler equations.

  Let $\mathbf{U}_0=(U(y),0)$ be the basic flow of Poiseuille-Couette family  with
  \begin{equation}U(y)=\alpha_1y+\alpha_2y(2-y),\  \ 0\leq y\leq2, \label{0.2}
  \end{equation}
where $ \alpha_1,\alpha_2\geq0$ are given constants and assumed always to satisfy
$\alpha_1+\alpha_2>0$. If $\alpha_1\equiv0$, then we get the plane Poiseuille flow,
while $\alpha_2\equiv0$, the flow is called the plane Couette flow.

For the characteristic quantities of the flow: the density $\rho^*$, the viscosity $\mu$, the characteristic velocity such as the free
stream velocity $V$, and the characteristic length $d$, we consider the dimensionless spatial coordinates $x=x^*/d,\ y=y^*/d$,
$(x,y)\in\Omega=(0,L)\times (0,2)$, the dimensionless velocity $\mau^\va=\mau^*/V$
 and the dimensionless normal stress  $P^\va=P^*/\rho^*V^2$. The steady incompressible Navier-Stokes equations can be written
 into the following dimensionless form:
\begin{equation} \label{main.NS}
\begin{cases}
\bold{u}^\va \cdot \nabla \bold{u}^\va + \nabla P^\va - \va \Delta \bold{u}^\va =\bold{f}^\va,\\
\nabla \cdot \bold{u}^\va = 0,
\end{cases}
\end{equation}
where $\mathbf{u}^\va=(u^\va,v^\va)$, $\va=1/Re$, $Re =\frac{\rho^* V^* d}{\mu}$ is the Reynolds number, $P^\va$ is the pressure,
and $\bold{f}^\va=(f_1^\va,f_2^\va)$ is the external force.

Besides, we consider an Euler flow:
\begin{align} \label{shear.euler}
\bold{u}^0 = (\mu(y), 0),
\end{align}
which,  for any sufficiently smooth function $\mu$, 
satisfies the stationary Euler equations:
\begin{equation} \label{main.euler}
\begin{cases}
\bold{u}^0 \cdot \nabla \bold{u}^0 + \nabla P^0 = 0 \\
\nabla \cdot \bold{u}^0 = 0 \\
\bold{u}^0 \cdot \bold{n}|_{y = 0, y = 2} = 0.
\end{cases}
\end{equation}
Here the pressure $P^0$ is a constant, $\bold{n}$ denotes the unit normal vector on $\{y = 0\}$ and $\{y = 2\}$, and the boundary condition $\bold{u}^0 \cdot \bold{n} = 0$ is known as the no-penetration condition.

We are interested in the asymptotic behavior of $\bold{u}^\eps$ as $\eps \rightarrow 0$ with no-slip boundary condition on $y=0$ and $y=2$.
In the presence of boundaries, if there is a mismatch between the no-slip boundary condition for the Navier-Stokes equations
and the no-penetration boundary condition satisfied by the Euler equations, then the limit as  $\eps \to 0$
would be rectified by the presence of the Prandtl boundary layer. There have been many works on the well-posedness
of the Prandtl boundary layer equations. Concerning the instability of the Prandtl boundary layer
in the non-steady setting, we can refer, for instance, to \cite{AWXY,GD,GMM,GM,G,GGN,MW,LY} and the references therein.
Considering the well-posedness of steady Prandtl boundary layers,
we refer to \cite{GM1,GI,GI1,GN,Iyer,IM,IM1} and the references therein.

  In this article, we shall study the situation when the Euler flow $\bold{u}^0 = (\mu(y), 0)$ itself is  a shear flow
  satisfying the no-slip boundary condition:
\begin{align}\label{0.6}
\mu(0) = 0,\ \mu(2)=2\alpha_1,
\end{align}
and is a small perturbation of the basic flow $\mathbf{U}_0$ (cf. (\ref{0.2})).

  In the 2D steady setting, under the assumption that $\mu''/\mu$ vanishes at high order at $y=0,2$, and
in the absence of external forces, the authors in \cite{IZ} investigate the inviscid limit of weak solutions to
the steady Navier-Stokes equations in a channel around the shear flows $(\mu(y),0)$ with homogeneous no-slip boundary condition
on the rigid wall and the stress-free condition on the outflow part of the boundary.
The weak solutions in \cite{IZ} can not be strong ones since one has no adequate regularity
near the corners of the domain due to the stress free boundary condition.
In \cite{GM1},  under the proper control
of the external forces, the authors establish the $H^1$ stability of shear flows of Prandtl type with periodic boundary condition in  $x$-direction.

 The aim of this paper is to study the existence and stability of smooth solutions to the steady Navier-Stokes equations around the plane
 Poiseuille-Couette family under the perturbations of both viscous stresses and external forces. Except for the no-slip boundary
 condition on the rigid walls $y=0$ and $y=2$, we assume further that the curl of the flow is well controlled
on the inflow and outflow parts of the boundary.

Before stating the main results, we introduce the notation used throughout this paper.
\\[1mm]
{\sc Notation:} \ Let $G$ be an open domain in $\mathbb{R}^N$.
We denote by $L^p(G)$ ($p\geq 1$) the Lebesgue spaces, by $W^{s,p}(G)$ ($p\geq 1$) the Sobolev spaces with $s$ being a real number,
by $H^k(G)$ ($k\in \mathbb{N}$) the Sobolev spaces $W^{k,p}(G)$ with $p=2$, and
by $C^k(G)$ (resp. $C^k(\overline{G})$) the space of $k$th-times continuously differentiable functions in $G$ (resp. $\overline{G}$).
 We use $|\cdot|$ to denote the Lebesgue norm at the boundary $\partial\Omega$ throughout this paper.
 $\|\cdot\|_{k,p}$ stands for the standard norm in $W^{k,p}(G)$ and $\|\cdot\|$ for the norm in $L^2(G)$.
 We also use $|\cdot|_{L^\infty}$ to denote $|\cdot |_{L^\infty(\Omega)}=\text{ess~sup}_{\Omega}|\cdot |$.
 The symbol $\lesssim$ means that the left side is less than the right side multiplied by some constant.
Let $a\in {\mathbb R}$ be a real number, then $a+$ means any real number a little bigger than $a$.

We also define a smooth cut-off function $\chi(t)\in C^\infty([0,\infty))$ satisfying $|\chi|\leq1,\ |\chi|_{C^4}\leq C$ and
\begin{equation}\label{cutoff}\chi(t)=\begin{cases}1,\ 0\leq t\leq 1/2,\\0,\ t\geq1,\end{cases}\end{equation}
here $C>0$ is a finite constant.

Next, we give the boundary condition considered in this paper.  Precisely, we impose the following boundary condition:
\begin{eqnarray}\label{boundary1}
&&\ v_x^\va|_{x=0}=a_1;\ u^\va|_{x=0}=\mu(y)+a_2;\ v^\va|_{x=L}=a_3;\ \partial_x\curl\mau^\va|_{x=L}=a_4;\nonumber\\
&&u^\va|_{y=0}=v^\va|_{y=0}=v^\va_y|_{y=0}=v^\va|_{y=2}=v^\va_y|_{y=2}=0,\ u^\va|_{y=2}=2\alpha_1,
\end{eqnarray}
where $a_i$ ($i=1,...,4$) are pre-scribed functions which are sufficiently small.
For simplicity, in the following we shall assume $a_i=0$, $i=1,...,4$.

To study the asymptotics as $\va\to 0$, we shall use the following ansatz of asymptotic expansion:
\begin{eqnarray}\label{0.exp}\begin{cases}
u^\va=u_s+\va^{M_0} u\\
v^\va=v_s+\va^{M_0} v\\
P^\va=P_s+\va^{M_0} P,\end{cases}
\end{eqnarray}
where  $\mau_s,P_s$ are given function to be determined later,  $\mau_s=(u_s,v_s)$ satisfying $\dv\,\mau_s=0$,
$M_0$ is a constant sufficiently large, such that we can control the nonlinear terms in the a priori estimates. In this paper we shall
take $M_0=\f{11}8+$. Putting (\ref{0.exp}) into (\ref{main.NS}), we obtain that  the remainder solutions $(u,v,P)$ satisfy the following system:
\begin{eqnarray}
\dv\mau=0,\label{0.4}\\
u_su_x+u_{sy}v+u_{sx}u+v_su_y-\va\Delta u+\partial_xP=N_1(u,v)+F_1 \\
u_sv_x+v_sv_y+v_{sx}u+vv_{sy}-\va\Delta v+\partial_yP=N_2(u,v)+F_2,\label{0.5}
\end{eqnarray}
where $\mau=(u,v),\ N_1=-\va^{M_0}( v u_y+ uu_x),\
N_2=-\va^{M_0} (vv_y+ uv_x)$, $\mcf=(\mcf_u,\mcf_v)$, $\mathbf{F}=(F_1,F_2)=\mcf+\va^{-M_0}\mathbf f^\va$ and
\begin{eqnarray}
&&\mathcal{F}_u=-\va^{-M_0}[u_s\partial_xu_s+v_s\partial_yu_s+\partial_xP_s-\eps\Delta u_s]\label{remain1}\\
&&\mathcal{F}_v=-\va^{-M_0}[u_s\partial_xv_s+v_s\partial_yv_s+\partial_yP_s-\eps\Delta v_s]\label{remain2}.
\end{eqnarray}

Finally, let us define the space $\mathcal{X}$:
\begin{eqnarray}
\mathcal{X}=&& \{\mau=(u,v)\in H^3(\Omega)\times  H^3(\Omega)\; |\;\dv\mau=0,\ \|\mau\|_{\mathcal{X}}<\infty,\ u|_{y=0}=v|_{y=0}=0,\nonumber\\
&&u|_{y=2}=v|_{y=2}=u|_{x=0}=v_{x}|_{x=0}=v|_{x=L}=v_{xx}|_{x=L}=0\},
\end{eqnarray}
where the norm $\|\cdot\|_{\mathcal{X}}$ is defined by
$$\|\mathbf{u}\|_{\mathcal{X}}:=\|\sqrt{u_s}\nabla v\|+\va^{\f12}\|\sqrt{u_s}\nabla^2q\|+\va^{\f32}\|\nabla^3\mathbf{u}\|+|u_sq_y(0,\cdot)|, $$
with $q = v/u_s$.

\subsection{Main theorems}

In this section we state the main results. First, we shall show that in the absence of any external force,
if $\mau_0=(\mu(y),0)$ is a small perturbation of the basic flow $\mathbf{U}_0$ within order $O(\va^{\f38+})$, and
\begin{equation}\mu(y)>0 \; \text{ for }\ 0<y<2,\quad  \mu'(0)>0, \label{euler.1}\end{equation}
then there exists a unique solution to the steady Navier-Stokes equations around $\mau_0$
which will converge to $\mathbf{U}_0$ as $\va\rightarrow0$. More precisely, if we denote
\begin{equation}
\mau^\va=\mau_0+\va^{1+\f38+\gamma}\mau,\label{0.1}
\end{equation}
where $\gamma>0$ is a sufficiently small constant, then our first main result reads as follows.
\begin{theorem}\label{main.1}
Assume that $\mathbf{f}^\va\equiv0$ in (\ref{main.NS}), $0<L\ll1$ is a given constant,
 $\mu\in C^4([0,2])$ satisfies (\ref{0.6}), (\ref{euler.1}), then there is a constant $\alpha_0>0$, such that if
 \begin{equation}|\mu (y)-U(y)|_{C^4([0,2])}\leq \alpha_0 \va^{\f38+\gamma},\label{0.13}\end{equation}
there exists a unique solution $(\mau^\va,P^\va)\in H^4(\Omega)\times H^3(\Omega)$ to the system (\ref{main.NS}) with the remainder solution
$\mau=(u,v)$ defined in (\ref{0.1}) satisfying
\begin{equation}
\|v\|_{L^2}+\va^{\f12}\|\nabla^2v\|_{L^2}+\va\|u_{yy}\|_{L^2}+\va^{\f32}\|\nabla^3\mau\|_{L^2}
+\va^{\f52}\|\nabla^4\mau\|_{L^2}\leq C\alpha_0,\label{th.0.1}
\end{equation}
and consequently
\begin{eqnarray}
&&\va^{-\f12}|u^\va-\mu|_{L^\infty(\Omega)}+\va^{-\gamma}\|u^\va-\mu\|_{H^2(\Omega)}\leq C\va^{\f38},\label{th.0.4}\\
&&\va^{-\f14}|v^\va|_{L^\infty(\Omega)}+\va^{-\gamma}\|v^\va\|_{H^2}\leq C\va^{\f78},\label{th.0.2}
\end{eqnarray}
where  the constant $C$ does not depend on $\va$.
\end{theorem}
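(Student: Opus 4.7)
The plan is to implement the asymptotic expansion (\ref{0.exp}) with a profile $\mathbf{u}_s$ close to $(\mu(y),0)$ and then solve the remainder system (\ref{0.4})--(\ref{0.5}) in the space $\mathcal{X}$ by a fixed-point argument. First I would choose $\mathbf{u}_s=(\mu(y),0)$ with pressure $P_s=\va\mu''(y)\,x$, so that $\mathrm{div}\,\mathbf{u}_s=0$, the convective term vanishes, and $-\va u_{s,yy}+\partial_xP_s=0$; then $\mathcal{F}_u\equiv 0$ and $\mathcal{F}_v=-\va^{1-M_0}\mu'''(y)\,x$. Because $U(y)$ is quadratic, the hypothesis (\ref{0.13}) gives $|\mu'''|_{L^\infty}\lesssim\alpha_0\va^{3/8+\gamma}$, which combined with $0<L\ll 1$ makes $\mathbf{F}$ small in the norms needed for the linear theory. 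If the residual $\mathcal{F}_v$ still needs further reduction, a linearized Euler corrector plus a weak Prandtl-type corrector can be inserted into $\mathbf{u}_s$ to restore the no-slip condition while keeping the residual of order $\alpha_0$.

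The core of the argument is the linear a priori estimate for
\begin{equation*}
\mathbf{u}_s\cdot\nabla\mathbf{u}+\mathbf{u}\cdot\nabla\mathbf{u}_s-\va\Delta\mathbf{u}+\nabla P=\mathbf{F},\qquad \mathrm{div}\,\mathbf{u}=0,
\end{equation*}
together with the boundary conditions inherited from (\ref{boundary1}). The key device is the substitution $q=v/u_s$, which is well defined since (\ref{euler.1}) gives $u_s=\mu>0$ in the interior with $\mu'(0)>0$ and, by symmetry, $\mu'(2)<0$. Reducing to the stream-function / vorticity form and testing the $v$-equation against a suitable multiple of $q$ produces the positivity estimate
\begin{equation*}
\|\sqrt{u_s}\nabla v\|^2+\va\|\sqrt{u_s}\nabla^2q\|^2+|u_sq_y(0,\cdot)|^2\lesssim \|\mathbf{F}\|_*^2+\text{(lower order)},
\end{equation*}
which controls the first three ingredients of $\|\mathbf{u}\|_{\mathcal{X}}$. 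The delicate point is the integration by parts near $y=0,2$, where $u_s$ degenerates linearly; this needs Hardy-type inequalities and careful tracking of the boundary trace, while the assumption $L\ll 1$ is used at several places to absorb lower-order Poincar\'e contributions. Having controlled $v$ and $q$, the remaining piece $\va^{3/2}\|\nabla^3\mathbf{u}\|$ comes from standard Stokes regularity applied to the full system, yielding the linear solvability bound $\|\mathbf{u}\|_{\mathcal{X}}\lesssim \|\mathbf{F}\|_*+(\text{data})$, and in particular $\|\mathbf{u}\|_{\mathcal{X}}\lesssim\alpha_0$ for the residual constructed in the first step.

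Once the linear theory is in hand, the full nonlinear system is solved by contraction in $\mathcal{X}$: for $\mathbf{w}\in\mathcal{X}$ with $\|\mathbf{w}\|_{\mathcal{X}}\le C\alpha_0$, define $T\mathbf{w}$ to be the solution of the linear problem with right-hand side $(N_1(\mathbf{w})+F_1,\,N_2(\mathbf{w})+F_2)$. The nonlinear terms carry the prefactor $\va^{M_0}=\va^{11/8+\gamma}$; combined with Sobolev embeddings $|\mathbf{w}|_{L^\infty}\lesssim\|\mathbf{w}\|_{\mathcal{X}}$ and the $\va$-weighted bounds built into $\mathcal{X}$, the threshold $M_0=11/8+\gamma$ is exactly what is needed to make $T$ invariant and contractive on the ball of radius $C\alpha_0$. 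Uniqueness comes from applying the same linear estimate to the difference of two solutions. The pointwise estimates (\ref{th.0.4})--(\ref{th.0.2}) are then read off from the relation $\mathbf{u}^\va-\mathbf{u}_0=\va^{11/8+\gamma}\mathbf{u}$ together with the $\mathcal{X}$-bound and one-dimensional Sobolev embeddings (which convert $\|\sqrt{u_s}\nabla v\|$ and $\va^{1/2}\|\sqrt{u_s}\nabla^2q\|$ into the claimed powers $\va^{1/4}$ and $\va^{1/2}$ at the $L^\infty$ level).

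The main obstacle is unquestionably the positivity estimate: building enough coercivity to dominate the non-selfadjoint terms $u_{sy}v$ and $u_{sx}u+v_{sx}u+vv_{sy}$ in (\ref{0.4})--(\ref{0.5}) while respecting the boundary degeneracy of the weight $u_s\sim y(2-y)$ and producing the exact $\va$-weights appearing in $\|\cdot\|_{\mathcal{X}}$. Once this estimate is established cleanly, and the smallness assumptions $L\ll 1$ and (\ref{0.13}) are invoked to absorb the lower-order and source contributions, the nonlinear closure and convergence rates are essentially a bookkeeping exercise against the scaling $M_0=11/8+\gamma$.
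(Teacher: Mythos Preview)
Your overall strategy---choose $\mathbf{u}_s=(\mu,0)$, derive linear $\mathcal{X}$-estimates via the quotient $q=v/u_s$, then close by contraction---is the same as the paper's. Two points deserve correction.

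First, your choice $P_s=\va\mu''(y)\,x$ gives $\mathcal F_v=-\va^{1-M_0}\mu'''(y)x$; the term $\va^{3}\|\mathbf F\|_{H^2}^2$ in the linear estimate then requires $\mu^{(5)}$, while the hypothesis is only $\mu\in C^4$. The paper instead takes $P_s=\va U''(y)x=-2\va\alpha_2 x$, which is constant in $y$, so $\mathcal F_v\equiv 0$ and $\mathcal F_u=\va^{1-M_0}(\mu''-U'')$; since $U'''\equiv 0$ the curl source $\va^{1-M_0}\mu'''$ is the same as yours, but now $\|\mathbf F\|_{H^2}$ only needs $(\mu-U)^{(4)}$, which is exactly what (\ref{0.13}) controls. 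Also, your claim ``by symmetry $\mu'(2)<0$'' is false in general: when $\alpha_1>0$ one has $\mu(2)=2\alpha_1>0$, so $u_s$ is non-degenerate at $y=2$ and $q_y(x,2)=0$; the sign argument $u_{sy}(2)\le 0$ is only used when $\alpha_1=0$. The paper treats both cases (see the discussion around (\ref{3.2.4})--(\ref{linear.2.8})).

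Second, and more substantively, the step you describe as ``standard Stokes regularity'' for $\va^{3/2}\|\nabla^3\mathbf u\|$ is not standard here and is in fact one of the paper's novelties. Because of the non-Dirichlet boundary conditions on $x=0,L$ and the lack of a good sign for $u_{sy}$ at $y=2$, the method of \cite{GI} (multiplying by $q_{yyy}$-type weights) fails to produce $v_{yyy}$. The paper instead (i) multiplies the curl equation by $\va^2 v_{xxx}$ to get $\va^3\|\nabla^2 v_x\|^2$, and (ii) passes to the biharmonic equation $\va\Delta^2 v=G$, homogenizes the compatibility condition $v_{xxx}|_{x=0}$ via an auxiliary elliptic problem, makes even/odd reflections across $x=0$ and $x=L$, and invokes $H^4$ regularity for the Dirichlet biharmonic problem on a smooth domain. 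This chain, plus interpolation $\va^{3/2}\|v\|_{H^3}\le \delta\va^{5/2}\|v\|_{H^4}+C\va^{1/2}\|v\|_{H^2}$, is what actually closes the $\mathcal X$-norm. Your outline should flag this as a second genuine obstacle alongside the positivity/coercivity estimate.
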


Next, in addition to (\ref{euler.1}), if we assume that $\mu''$ degenerates  rapidly,  when $ y\rightarrow0$,
i.e., we have to take $\alpha_2\equiv0$ and thus the basic flow is  Couette flow, then the flow is stable within any finite disturbance $o(1)$ of the basic flow.

We denote
\begin{equation}
\mau^\va=\mau_s+\va^{1+\f38+\gamma}\mau,\label{0.9}
\end{equation}
where $\mau_s$ is a known function defined in (\ref{asy:1:1:1}). Our second result reads as
\begin{theorem}\label{main.2}
In addition to the assumptions in Theorem \ref{main.1}, we also assume that $\alpha_2=0$ in (\ref{0.2}), $k>0$
is a suitably large integer, and
\begin{equation}|\f{\mu''}{\mu}|_{L^\infty} \ \text{is\ sufficiently \ small,}\ \ \ \label{0.8}  \end{equation}
then there is a constant $\alpha_0>0$, such that if
\begin{equation}|\f{\mu'''}{\mu}|_{C^{k}([0,2])}\leq \alpha_0,\label{th.0.3}\end{equation}
  there exists a unique solution $(\mau^\va,P^\va)\in H^4(\Omega)\times H^3(\Omega)$ to the system (\ref{main.NS}) satisfying
  \begin{equation}
  \|\mau^\va\|_{H^2}+\|P^\va\|_{H^1}\leq C\alpha_0,
  \end{equation}
  and
\begin{equation}
\|v\|_{L^2}+\va^{\f12}\|\nabla^2v\|_{L^2}+\va\|u_{yy}\|_{L^2}+\va^{\f32}\|\nabla^3\mau\|_{L^2}+\va^{\f52}\|\nabla^4\mau\|_{L^2}\leq C\alpha_0,
\end{equation}
where $\mau=(u,v)$ is the remainder solution defined in (\ref{0.9}).
Moreover, it holds that
\begin{eqnarray}
 |u^\va-\mu|_{L^\infty}+|v^\va|_{L^\infty}\leq  C\va ,\label{th.0.5}
\end{eqnarray}
where $C$ is a constant independent of  $\va$.
\end{theorem}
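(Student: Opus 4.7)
The overall strategy mirrors that of Theorem \ref{main.1}: build a sufficiently accurate approximate solution $\mathbf{u}_s$ so that the residual $\mathbf{F}=\mcf$ in (\ref{remain1})--(\ref{remain2}) is small, and then solve the remainder system (\ref{0.4})--(\ref{0.5}) by a contraction mapping in the space $\mathcal{X}$. The new difficulty is that $\mu$ is no longer $O(\va^{3/8+})$-close to $U$; the naive choice $\mathbf{u}_s=(\mu(y),0)$ leaves a viscous residual $\va\mu''$ which, divided by $\va^{M_0}$ in (\ref{remain1}), is far too large to be absorbed by the linear theory. A multi-term asymptotic expansion is therefore required, and this is precisely what (\ref{asy:1:1:1}) is meant to furnish.

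Accordingly, I would take an ansatz of the form
\begin{equation*}
\mathbf{u}_s = (\mu(y),0)+\sum_{j=1}^{N}\va^{j}\bigl(\mathbf{u}_{e,j}+\mathbf{u}_{p,j}\bigr),\qquad P_s=P_0+\sum_{j=1}^{N}\va^{j}P_j,
\end{equation*}
where the $\mathbf{u}_{e,j}$ are Euler correctors obtained by inverting the steady linearized Euler operator about $(\mu(y),0)$ with inductively determined forcings (at $j=1$ the forcing is $-\mu''$), and the $\mathbf{u}_{p,j}$ are \emph{weak} boundary-layer correctors localized by the cut-off $\chi$ from (\ref{cutoff}) near $y=0$ and $y=2$ in order to repair the mismatches the $\mathbf{u}_{e,j}$ create in the no-slip condition (\ref{0.6}). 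Because the Euler shear $(\mu,0)$ already satisfies no-slip, these layers enter at amplitude $O(\va)$ rather than $O(1)$, so no full Prandtl analysis is needed. Choosing $N$ large enough that $\va^{N}\ll\va^{1+3/8+\gamma}$ drives $\mathbf{F}$ below the threshold required by the linear $\mathcal{X}$-estimate, and at each order the boundary data in (\ref{boundary1}) can be arranged by adjusting the outflow traces of the $\mathbf{u}_{e,j}$.

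Given such an $\mathbf{u}_s$, the remainder system retains the form treated in Theorem \ref{main.1}, and since $(u_s,v_s)$ differs from $(\mu(y),0)$ only by $O(\va)$, the linear a priori estimate $\|\mathbf{u}\|_{\mathcal{X}}\lesssim\|\mathbf{F}\|_{*}$ established earlier in the paper applies with only cosmetic changes to the coefficients. The nonlinearities $N_1,N_2$ carry the prefactor $\va^{M_0}=\va^{11/8+}$, and an interpolation within $\mathcal{X}$ using the $\va^{3/2}\|\nabla^{3}\mathbf{u}\|$ ingredient together with Sobolev embedding yields $\|(N_1,N_2)\|_{*}\lesssim \va^{c}\|\mathbf{u}\|_{\mathcal{X}}^{2}$ for some $c>0$, so a standard Banach fixed point in a ball of radius $\alpha_0$ in $\mathcal{X}$ produces the unique solution $\mathbf{u}$. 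The $H^{2}$ bound on $\mathbf{u}^\va$ and the remainder estimates follow from this $\mathcal{X}$-bound; the $L^\infty$ convergence (\ref{th.0.5}) is then a consequence of Sobolev embedding combined with the explicit $O(\va)$ size of the correctors in $\mathbf{u}_s$.

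The main obstacle is the solvability and uniform control of the Euler correctors. The steady linearized Euler operator about $(\mu(y),0)$ is a Rayleigh-type problem whose Green's function degrades rapidly when $\mu''$ changes sign or becomes comparable to $\mu$, and in general it admits non-trivial neutral modes. The hypothesis $|\mu''/\mu|_{L^\infty}\ll 1$ is precisely what turns this operator into a small perturbation of the explicit Couette calculation, guaranteeing unique solvability at every order; the bound $|\mu'''/\mu|_{C^{k}}\leq\alpha_0$ with $k$ large then propagates enough regularity through the induction so that the residual $\mathbf{F}$ meets the regularity demands of the linear $\mathcal{X}$-estimate and so that the weak boundary-layer profiles can be built without losing powers of $\va$. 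This is also the reason the theorem insists on $\alpha_2=0$: a genuine Poiseuille component would reintroduce an $\mathcal{O}(1)$ value of $\mu''/\mu$ near the interior maximum of $\mu$, invalidating the inversion at each step of the asymptotic construction.
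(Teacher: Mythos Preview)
Your proposal follows the same route as the paper: build $\mathbf{u}_s$ from Euler correctors plus weak boundary-layer correctors (this is Section 2 and Theorem \ref{thm2.2}), feed the resulting small residual into the linear $\mathcal{X}$-estimate (Theorem \ref{th.linear1}), and close by contraction (Section 4, case (ii)). Your reading of hypothesis (\ref{0.8}) is also on target: in vorticity form the first Euler-corrector problem becomes $-\Delta v_e^1+(\mu''/\mu)v_e^1=\mu'''/\mu$ (equation (\ref{divide.1})), and smallness of the zeroth-order coefficient is exactly what makes this elliptic problem coercive at every stage of the hierarchy.

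There is, however, one concrete point where your ansatz would fail as written: the integer-power expansion $\sum_j\va^{j}(\mathbf{u}_{e,j}+\mathbf{u}_{p,j})$ does not close. Because $\mu(0)=0$ while $\mu(2)=2\alpha_1>0$, the boundary-layer equation near $y=0$ is \emph{degenerate}: the convection $\mu\,\p_x u_p\sim y\,\p_x u_p$ balances the diffusion $\p_{yy}u_p$ only on the scale $Y_-=y/\va^{1/3}$, not the nondegenerate Prandtl scale $Y_+=(2-y)/\va^{1/2}$ that is correct near $y=2$. The paper is therefore forced to run two parallel hierarchies in fractional powers $\va^{1+(i-1)/3}$ and $\va^{1+(i-1)/2}$ as displayed in (\ref{asy:1:1:1}), with separate Euler correctors $u_e^{i,\pm}$ carrying different boundary data at the two walls (systems (\ref{v2e.BVP}) and (\ref{v2e.BVP1}), together with the maximum-principle argument in Lemma \ref{lem2.2} ensuring they do not interfere). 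Once this two-scale structure is in place, the residual bound (\ref{2.2.8}) holds and the fixed-point argument proceeds exactly as you outline; the estimate of $(\curl\mathbf{F},q_x)$ in this case is (\ref{4.2}).
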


Finally,  if we put a suitable external force $\mathbf{f}^\va$
to ''rewind'' the flow, then we can obtain a unique smooth solution to the system (\ref{main.NS}) around any basic shear flow
$\mathbf{u}^0(y)=(\mu(y),0)$, satisfying
\begin{equation}
\mu'(0)>0,\ \mu(y)>0,\ \text{for}\ 0<y<2,\ \ |\mu|_{C^4}\leq K_0 \ \ \text{in}\ \Omega,\label{0.3}
\end{equation}
where $K_0>0$ is a fixed constant.
More precisely, setting $$\mathbf{g}^\va=\mathbf{f}^\va+\va (\mathbf{u}^0)'',\ \mau^\va=\mathbf{u}^0+\va^{1+\f{3}{8}+\gamma}\mau,$$
where $\mau=(u,v),\ \mathbf{f}^\va=(f_1^\va,f_2^\va),\ \mathbf{g}^\va=(g_1^\va,g_2^\va)$. Our last theorem reads as
\begin{theorem}\label{main.3}
Let $0<L\ll1$ be a given constant,  $\mu(y)$ is a smooth function satisfying (\ref{0.3}). Then, there exists a constant $\alpha_0>0$, such that if
\begin{equation}\|\mathbf{g}^\va\|_{H^2}\leq \alpha_0 \va^{\f{11}8+\gamma},\label{0.12}\end{equation}
 there exists a unique solution $(\mau^\va,P^\va)\in H^4(\Omega)\times H^3(\Omega)$ to the system (\ref{main.NS}), satisfying (\ref{th.0.1})-(\ref{th.0.2}).
\end{theorem}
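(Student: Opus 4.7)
The plan is to exploit the observation that the external force $\va(\mathbf{u}^0)''=(\va\mu''(y),0)$ is precisely what is needed to make the basic shear flow $\mathbf{u}^0=(\mu(y),0)$ an exact solution of the steady Navier--Stokes system. Choosing $\mau_s:=\mathbf{u}^0$, $v_s\equiv 0$ and $P_s$ constant in the ansatz (\ref{0.9}), a direct computation in (\ref{remain1})--(\ref{remain2}) gives $\mcf_u=\va^{1-M_0}\mu''(y)$, $\mcf_v=0$, and hence the total inhomogeneity in the remainder system (\ref{0.4})--(\ref{0.5}) reduces to
\[
\mathbf{F} \;=\; \mcf + \va^{-M_0}\mathbf{f}^\va \;=\; \va^{-M_0}\mathbf{g}^\va.
\]
With $M_0=\tfrac{11}{8}+\gamma$, the assumption (\ref{0.12}) gives $\|\mathbf{F}\|_{H^2}\lesssim \alpha_0$. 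No Euler or Prandtl correctors are needed here, since $\mau_s$ is \emph{exactly} a Navier--Stokes flow under the shifted forcing; this is a substantial simplification compared with the construction in Theorems \ref{main.1}--\ref{main.2}.

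The second step is to solve the remainder system (\ref{0.4})--(\ref{0.5}) in the space $\mathcal{X}$ with coefficients $u_s(y)=\mu(y)$, $v_s\equiv 0$, and inhomogeneity $\mathbf{F}$ of order $\bigO(1)$ in $H^2$. The hypothesis (\ref{0.3}) provides exactly the structural properties of $\mu$ that underlie the linear estimates in Theorems \ref{main.1}--\ref{main.2}: positivity of $\mu$ on $(0,2)$, positivity of $\mu'(0)$, and uniform $C^4$ control. Working with the variable $q=v/u_s$ and performing the same weighted energy estimates as in the proofs of the earlier theorems -- but now with $v_s\equiv 0$, which kills several transport cross-terms -- I expect to obtain the linear a priori bound
\[
\|\mau\|_{\mathcal{X}} \;\lesssim\; \|\mathbf{F}\|_{H^2} + \va^{M_0}\|\mau\|_{\mathcal{X}}^{2}.
\]
The nonlinear term $N_1,N_2$ is quadratic in $\mau$ and carries the gain $\va^{M_0}$; via Sobolev embedding and the definition of $\|\cdot\|_{\mathcal{X}}$ it is dominated by $\va^{M_0}\|\mau\|_{\mathcal{X}}^{2}$, which is absorbed once $\alpha_0$ is small. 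Existence and uniqueness then follow from a standard contraction argument in $\mathcal{X}$, and the $H^4\times H^3$ regularity of $(\mau^\va,P^\va)$ from elliptic/Stokes regularity applied to (\ref{main.NS}) once $\mau\in\mathcal{X}$ and $\mathbf{g}^\va\in H^2$ are known. The estimates (\ref{th.0.1})--(\ref{th.0.2}) follow directly from (\ref{0.9}) and the $\mathcal{X}$-bound.

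The hardest technical point, and the one that consumes most of the work, is checking that the linear estimates proved for perturbations of the explicit Poiseuille--Couette profile $U(y)$ carry over to an arbitrary shear $\mu$ satisfying only (\ref{0.3}). The issue is the term containing $\mu''$: unlike (\ref{0.13}), it is no longer small, so one cannot treat it as a perturbation of the basic estimates for $U$. The role of the smallness of $L$ is exactly to absorb this term into the dissipation through a Poincar\'{e}-type inequality in $x$, together with the weight $u_s=\mu$. The delicate boundary-term accounting near $y=0$, where $u_s$ vanishes but $\mu'(0)>0$ gives the needed nondegeneracy for the test function $q=v/u_s$, proceeds just as in the proofs of Theorems \ref{main.1}--\ref{main.2}. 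Once this uniform-in-$\va$ linear well-posedness is secured, the remainder of the argument is structurally identical to the previous theorems.
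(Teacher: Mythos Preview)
Your proposal is correct and follows the paper's approach exactly: take $\mau_s=\mathbf{u}^0$, $P_s$ constant so that $\mathbf{F}=\va^{-M_0}\mathbf{g}^\va$, apply the linear estimate of Theorem~\ref{th.linear1} (whose proof, as you correctly anticipate, uses only the structural hypothesis (\ref{0.3}) on $\mu$ together with smallness of $L$ to absorb the $u_{syy}$-terms), and close via contraction in $\mathcal{X}$. Two minor corrections worth noting: the nonlinear gain after the Sobolev losses in Lemmas~\ref{lem.2}--\ref{lem3.3} is only $\va^{\gamma/2}$, not $\va^{M_0}$ (cf.\ (\ref{4.5})); and the forcing does not enter the key second-order estimate simply as $\|\mathbf{F}\|_{H^2}$ but through the pairing $(\curl\mathbf{F},q_x)$, which the paper bounds by $C\alpha_0\|\mau\|_{\mathcal{X}}$ via an integration by parts in $x$ reducing it to a trace at $x=0$ controlled by $|u_sq_y(0,\cdot)|$ (see (\ref{4.3}))---this is precisely where the full $H^2$ hypothesis on $\mathbf{g}^\va$ is used.
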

\begin{remark}
The zero viscosity limit of the solutions in Theorems 1--3 to the solutions of the steady Euler equations follow immediately from
(\ref{th.0.4}), (\ref{th.0.2}) and (\ref{th.0.5}). In particular, we have
$$ \mau^\va\rightarrow\mau_0\ \text{ in }\ H^2(\Omega) $$
in Theorems 1 and 3, while in Theorem 2, we only have
\begin{eqnarray}\mau^\va\rightarrow\mau_0\ \text{ in } H^s(\Omega), \ 0<s<2.\label{0.10}\end{eqnarray}
However, if we further assume
$$|\f{\mu'''}{\mu}|_{C^{k}([0,2])}\leq \alpha_0\va^{0+},$$
then we can take $s=2$ in (\ref{0.10}).
\end{remark}
\begin{remark}
In Theorem \ref{main.2},  $\mau_0=(\mu,0)$ could be any shear flow with $\mu$ be a smooth function satisfying (\ref{euler.1}),
(\ref{0.8}) and (\ref{th.0.3}). In particular, if $\mu(0)=\mu(2)=0$, we need one more condition that $\mu''$ degenerates rapidly near $y=2$.
In this case, we can use the same scale $Y=\va^{-\f13}\tilde{y}$ ($\tilde{y}=y$ near $y=0$, $\tilde{y}=2-y$ near $y=2$)
in the construction of the weak boundary layer correctors in Section 2.
\end{remark}
 \begin{remark}In Theorem \ref{main.2}, although the remainders $(\mcf_u,\mcf_v)$ defined in (\ref{remain1})-(\ref{remain2})
 can be arbitrary small,  we can only obtain the uniform-in-$\va$ boundedness
 of $\|\mau^\va\|_{H^2}$ as a result of the weak boundary layer correctors  in the approximate solution $\mau_s$.
 \end{remark}
Let us give a few comments on our results. Considering solutions to the steady Navier-Stokes equations in the absence of external forces,
the class of strictly parallel flows $\mathbf{U}_0=(\mu(y),0)$ is limited, since $\mu(y)$ has to satisfy the equation of motion: $$\va\f{d^2\mu}{dy^2}=\f{dP}{dx}.$$
 This includes two important special cases:
 $${\it Plane\ Couette\ flow}:\qquad \mu(y)=y, \ P=\text{constant},\ 0<y<2,$$
 and
$${\it Plane\ Poiseuille\ flow}:\qquad \mu(y)=y(2-y), \ \f{dP}{dx}=\text{constant},\ 0<y<2.$$
We have therefore obtained a  family of strictly parallel flows, say the plane Poiseuille-Couette flow $\mathbf{U}_0=(U(y),0)$ with
  $U(y)$ defined in (\ref{0.2}).

Compared with \cite{GM1,GI}, where the authors studied the stability of the Prandtl expansions for the steady Navier-Stokes equations,
 the Euler flow $\bold{u}^0 = (\mu(y), 0)$  in this paper satisfies the no-slip boundary condition:
$\mu(0) = 0,\ \mu(2)=2\alpha_1$. Thus, there would be no strong boundary layers around the rigid walls $y=0$ and $y=2$.
In the absence of any external force, we could construct an Euler corrector to balance the perturbation of the internal viscous
stress of the fluid.  Due to the no-slip boundary condition $\mu(0) = 0$, the equation for the Euler corrector would have
a degenerate coefficient. It seems that for  Poiseuille-Couette flows with $\alpha_2>0$, we can not continue the expansion
to obtain arbitrary small remainders due to the eigenvalue problem of the Laplace operator. However,
if  $\alpha_2=0$, and $\mu''$ degenerate rapidly near $y=0$, then we can continue the expansion
to obtain arbitrary small remainders by using weak boundary layer correctors to rectify the mismatch between the no-slip boundary
 condition for the Navier-Stokes equations and the no-penetration boundary condition for Euler correctors.
 So, we can prove that the Couette flow is stable under finite small perturbation.

 Our proof is inspired by the method in \cite{GI}. Compared with the boundary layer profiles in \cite{GI},
 we would have one more boundary $y=2$ with no-slip boundary condition, and as mentioned above,
 the equations for the Euler correctors would have degenerate coefficients
 near the rigid walls. If the basic flow is a Couette flow, considering the weak boundary layer correctors  in the expansion,
 the dominating equation is a  linear parabolic equation with a degenerate coefficient $\mu(y)\sim y$ near the boundary $y=0$ and
  non-degenerate coefficients near $y=2$. So in the process of multi-scale analysis, we have to use different scales to balance
 the weak boundary layer terms, i.e., we utilize the scale $Y_- := y/\eps^{\frac{1}{3}}$ near $y=0$ and
 $Y_+ :=(2-y)/\eps^{\frac{1}{2}}$ near $y=2$. Consequently, the boundary conditions for the corresponding Euler correctors
 $\mau_e^{i,\pm}$ are also different near the rigid walls. Moreover, the linear estimates in \cite{GI} failed in obtaining the third order derivative $v_{yyy}$ as we do not have a good sign for $u_{sy}$ on the upper bound $y=2$.
 Instead, we shall use the theory of the Dirichlet boundary value problem for biharmonic equations as well as the Sobolev imbedding theory
 to close the estimates. Finally, let us describe the main steps in the proof of the  theorems.

 In Section 2, we give the construction of the asymptotic solution $\mau_s$ when $\alpha_2=0$ in the absence of any external force. First, we  use an  Euler corrector $\mau_e^1$ to offset the perturbation of the viscous stress of the flow.  Then  we use  weak boundary layer correctors to rectify  the mismatch between the no-slip condition for Navier-Stokes equations  and the no-penetration condition for Euler correctors.
Thanks to the condition  that $\mu''$ degenerates rapidly when $ y\rightarrow0$,
we can continue to make as many expansions as we want and the remainders $(\mcf_u,\mcf_v)$ can be arbitrary small.
For the weak boundary layer correctors  in the expansion, as mentioned above,
we use the scale $Y_- := y/\eps^{\frac{1}{3}}$ near $y=0$ and the scale $Y_+ :=(2-y)/\eps^{\frac{1}{2}}$ near $y=2$. Consequently, we have to construct the  weak boundary layer correctors $u_p^{i,\pm}$ separately.

In Section 3, we study the linearized system. First by introducing the stream function, we prove the existence of solutions to
the linearized system. Then, we derive the uniform-in-$\va$ estimates of solutions to the linearized system.
 We first use $v$ as a multiplier to act on the curl equation  to obtain  the $L^2$ norm of $v$
  bounded in order $O(1)$. Then, we use the multiplier $q_x= (v/{u_s})_x$ to act on the curl equation to get the estimates
 for second order derivatives. Broadly speaking, the multiplier $q_x$ works well in controlling the convection terms,
  but also brings complicated calculations in controlling the viscous terms.
  Next, as mentioned above, we can not use the method in \cite{GI} to obtain the estimates for the third order derivatives
  of the solutions as we do not have  good signs for $u_{sy}$ on the upper bound $y=2$.
  Instead, we shall employ the multiplier $v_{xxx}$ to act on the curl equation to obtain a bound for $\|\nabla^2v_x\|$
  which implies the boundedness of the convection terms in the biharmonic equation for $v$. Finally,
  by the compatibility condition from the curl equation, we can obtain the estimates
   of fourth order derivatives of $v$ by employing the theory of the Dirichlet boundary value problem for biharmonic equations.
  In this way, we can close the estimates by employing a careful bootstrap argument and the Sobolev imbedding theory.

Finally in Section 4, based on the linear estimates in Section 3, we prove the main theorems by the contract mapping principle.
We point out here that the only difference between the proofs of Theorems 1-3 lies in the remainders $\mcf=(\mcf_u,\mcf_v)$ in the linearized
system. Besides, the restriction on the exponent $M_0$ in the expansion (\ref{0.exp}) mainly comes from controlling the nonlinear
terms in Subsection 3.2.

\renewcommand{\theequation}{\thesection.\arabic{equation}}
\setcounter{equation}{0}

\section{Formal asymptotic expansion around the Couette flow}

The goal of this section is to construct approximate solutions $(\mau_s,P_s)$ in (\ref{0.exp}). In the case when $\alpha_2>0$ in the absence of any external force, or there is a suitable external force $\mathbf{f}^\va$  to control the flow,  we will  directly take $\mau_s=\mau^0$. So in this section we will only study the special case when $\alpha_2=0$ in the absence of any external force, i.e., the basic flow is Couette flow. Without loss of generality, we will assume $\alpha_1=1$.  Due to the fact that the Euler profile $\mau^0$  satisfies the no-slip boundary condition, there would be no strong boundary layers near the boundaries $y=0$ and $y=2$.

 First, we shall use an  Euler corrector $\mau_e^1$ to offset the perturbation of the viscous stress of the fluid.
Then  there would be, similar to the case of Prandtl boundary layers,  a mismatch between the no-slip condition for Navier-Stokes equations  and the no-penetration condition  for the Euler corrector. This mismatch can be rectified by the presence of the weak  boundary layer corrector. As the dominating equation for the weak boundary layer corrector  is a linear parabolic equation including a degenerate term near $y=0$, in the process of multi-scale analysis near the boundaries, we will broke the boundary layer profile into two components, one supported near $y = 0$ with  scale $Y_- := y/\eps^{\frac 1 3}$,  $\mau_p^{1,-}$, and one supported near $y = 2$ with scale $Y_+ := (2-y)/\eps^{\frac 1 2}$,  $\mau_p^{1,+}$. Thanks to the degeneration of $\mu''$ near $y=0$, we can continue the  construction of $\mau_e^{i,\pm}$ and $\mau_p^{i,\pm,}$  till the remainders defined in (\ref{remain1})-(\ref{remain2}) are small enough.

 In what follows,
 the Eulerian profiles are functions of $(x,y)$, whereas the boundary layer profiles are functions of $(x,Y)$, where
\begin{align}
Y =
\left\{
\begin{aligned} \label{Y:defn}
&Y_+ := \frac{2-y}{\eps^{\frac 1 2}} \text{ if } 1 \le y \le 2, \\
&Y_- := \frac{y}{\eps^{\frac 1 3}} \text{ if } 0 \le y \le 1.
\end{aligned}
\right.
\end{align}

 As we  have used different scales $Y_{\pm}$ in the construction of the weak boundary layer profiles, we will have to use different Euler correctors $\mau_e^{i,\pm}$ in the following expansion. The expansion will be continued till the remainders are small enough. More precisely,
 we  expand the solution in $\va$ as:
\begin{equation}\label{asy:1:1:1} \begin{cases}
u^\eps=\mu+\va u_e^1+\va u_p^{1,+}+\va u_p^{1,-}+ \sum_{i = 2}^{M} \eps^{1 + \frac{i-1}{2} } (u^{i,+}_e + u^{i,+}_p)\\
\qquad+ \sum_{i = 2}^{M} \eps^{1 + \frac{i-1}{3} } (u^{i,-}_e + u^{i,-}_p) +\eps^{M_0}u\\
\quad \triangleq u_s + \eps^{M_0}u\\
v^\eps=\va v_e^1+\va^{\f32} v_p^{1,+}+\va^{\f43} v_p^{1,-}+ \sum_{i = 2}^{M} \eps^{1 + \frac{i-1}{2} } (v^{i,+}_e + \va^{\f12}v^{i,+}_p)\\
\qquad+ \sum_{i = 2}^{M} \eps^{1 + \frac{i-1}{3} } (v^{i,-}_e + \va^{\f13}v^{i,-}_p)+\eps^{M_0}v \\
\quad \triangleq v_s + \eps^{M_0}v,\\
P^\eps =C+\va P_e^1+\sum_{i = 1}^5 \va^{1+\f{1+i}3}P^{i,a,-}_P+\sum_{i = 2}^{M} \eps^{1 + \frac{i-1}{3} } P^{i,-}_e +\sum_{i = 1}^3 \va^{1+\f{1+i}2}P^{i,a,+}_P\\
\qquad+\sum_{i = 2}^{M} \eps^{1 + \frac{i-1}{2} } P^{i,+}_e +\eps^{M_0}P \\
\quad \triangleq P_s + \eps^{M_0} P,\\
\end{cases}
\end{equation}
where $C$ is any constant, $M$ is an integer large enough, for example, $M=10$ would be enough.

We shall also introduce the notation
\begin{equation}\label{euler.0}\begin{cases}
u_s^E := \mu +\va u_e^1 +\sum_{i = 2}^{M} \eps^{1 + \frac{i-1}{3}} u^{i,-}_e +\sum_{i = 2}^{M} \eps^{1 + \frac{i-1}{2} } u^{i,+}_e, \\ v_s^E :=\va v_e^1+ \sum_{i = 2}^{M} \eps^{1 + \frac{i-1}{2} } v^{i,+}_e+\sum_{2 = 1}^{M} \eps^{1 + \frac{i -1}{3}} v^i_e.\end{cases}
\end{equation}

The main result of this section reads as:
\begin{theorem}\label{thm2.2}
Assume that $\mu$  is a smooth function satisfying conditions in Theorem \ref{main.2}, then we can construct a triple  $(u_s,v_s, P_s)$ with formula defined in (\ref{asy:1:1:1}) satisfying:
\begin{equation}\dv\mau_s=0\ \text{in}\ \Omega,\ u_s=v_s=0,\ \text{on}\ y=0;\ \ u_s=2\alpha_1,\ v_s=0,\ \text{on}\ y=2,\label{2.2.1}\end{equation}
and
\begin{eqnarray}
&&|\p_x u_s|+ |\p_y v_s| + |u_s - \mu|  \lesssim \min \{ O(\va^{\f23})y,O(\va^{\f12})(2-y), O(\eps) \},  \label{2.2.3}\\
&&\va^{-\f12}|\p_y u_s - \mu'| +|u_{syy}|+|u_{syyy}\chi(y)|+\va^{\f12}|u_{syyy}\chi(2-y)|\lesssim \alpha_0,  \label{2.2.4}\\
&&|\p_x^l v_s| \lesssim \min\{\eps y,\va(2-y)\} \text{ for } l \ge 0,\label{2.2.5}
\end{eqnarray}
where $\chi(y)$ is the cut-off function defined in (\ref{cutoff}). Moreover, the remainders defined in (\ref{remain1}) and (\ref{remain2}) satisfying
\begin{eqnarray}
&&\va^{M_0}\mathcal{F}_u = \mathcal{T}_{u,\eps^2} + \mathcal{F}_{u,\eps^{3}}\label{2.2.6}\\
&&\va^{M_0}\mathcal{F}_v = \mathcal{T}_{v,\eps^2} + \mathcal{F}_{v, \eps^{3}},\label{2.2.7}
\end{eqnarray}
with estimate:
\begin{align} \label{2.2.8}
\| \mathcal{T}_{u,\eps^2} \|_{H^2}+\| \mathcal{T}_{v,\eps^2}\|_{H^2}+\| \mathcal{F}_{u, \eps^3} \|_{H^2} + \| \mathcal{F}_{v, \eps^3} \|_{H^2} \lesssim \eps^2\alpha_0.
\end{align}

\end{theorem}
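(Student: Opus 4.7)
The plan is to build the expansion (\ref{asy:1:1:1}) inductively, alternating between Euler correctors $\mathbf{u}_e^{i,\pm}$ and weak boundary layer correctors $\mathbf{u}_p^{i,\pm}$, and then verify that the estimates (\ref{2.2.3})--(\ref{2.2.5}) follow from explicit bounds on each corrector, and that the residual (\ref{2.2.6})--(\ref{2.2.7}) collects only negligibly small contributions plus the two transport-type pieces $\mathcal{T}_{u,\va^2}, \mathcal{T}_{v,\va^2}$.

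First I would construct $(u_e^1, v_e^1, P_e^1)$ by substituting $\mathbf{u}^0+\va \mathbf{u}_e^1$ into (\ref{main.NS}) and keeping the $\va$-order terms: this gives the linear Euler system $\mu \p_x u_e^1 + \mu' v_e^1 + \p_x P_e^1 = \mu''$, $\mu \p_x v_e^1 + \p_y P_e^1 = 0$, $\dv\,\mathbf{u}_e^1=0$, with the no-penetration condition $v_e^1|_{y=0,2}=0$ and appropriate $x$-boundary data inherited from (\ref{boundary1}). Introducing a stream function $\psi_e^1$, this reduces to an elliptic equation with a lower-order term involving $\mu''/\mu$, which is solvable in $H^k$ for $L$ small enough thanks to the smallness hypotheses (\ref{0.8}) and (\ref{th.0.3}); the factor $\mu''/\mu$ being small is precisely what keeps the Euler source tame. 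The trace $u_e^1|_{y=0,2}$ is then generally non-zero, creating the first no-slip mismatch.

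Second, I would kill this mismatch with the weak boundary layer correctors. Near $y=0$ the convection coefficient $\mu(y)\sim y$ is degenerate, so the balance $\mu(y)\p_x u_p - \va \p_y^2 u_p = 0$ forces the scale $Y_- = y/\va^{1/3}$; near $y=2$ the coefficient $\mu(2)=2\alpha_1>0$ is non-degenerate, giving the standard Prandtl scale $Y_+=(2-y)/\va^{1/2}$. The profiles $u_p^{1,\pm}(x,Y_\pm)$ solve degenerate/regular parabolic equations on the half-plane with Dirichlet data matching $-u_e^1|_{y=0,2}$ and decay $u_p^{1,\pm}\to 0$ as $Y_\pm\to\infty$; existence, uniqueness and exponential decay follow from standard maximum-principle/energy arguments, while $v_p^{1,\pm}$ is recovered from $\dv=0$ (which accounts for the $\va^{1/2}$ and $\va^{1/3}$ prefactors in the ansatz). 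Then I would iterate: the new no-penetration error of $v_p^{1,\pm}$ on the wall is cancelled by a next Euler corrector $\mathbf{u}_e^{2,\pm}$, its tangential error by $\mathbf{u}_p^{2,\pm}$, and so on up to order $M=10$. The associated pressure correctors $P^{i,a,\pm}_P$ appearing in (\ref{asy:1:1:1}) arise, as is standard in Prandtl-type expansions, as secondary pieces needed to make the $\p_y P$ equation consistent at each order.

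Third, I would verify (\ref{2.2.3})--(\ref{2.2.5}) directly from the form of (\ref{asy:1:1:1}): each Eulerian term is $O(\va^{1+(i-1)/2})$ or $O(\va^{1+(i-1)/3})$, and each boundary layer term decays in $Y_\pm$, so after using $y\sim \va^{1/3}Y_-$ or $(2-y)\sim \va^{1/2} Y_+$ one recovers the asserted min-bounds; the estimate for $u_{syyy}\chi(y)$ picks up the worst scaling $\va^{1/3\cdot(-3)}\cdot\va^{4/3}=O(1)$ near $y=0$, while the factor $\va^{1/2}$ in front of $u_{syyy}\chi(2-y)$ compensates the three $\p_y$'s of the $\va^{1/2}$-layer near $y=2$. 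For the remainder estimate (\ref{2.2.8}), every term that is a pure error from a corrector constructed to kill the previous order is of size $\va^{1+M/2}$ or $\va^{1+M/3}$, hence $\lesssim \va^{M_0+2}$ once $M_0=11/8+$ and $M=10$; what survives at size $\va^{M_0+2}$ are the genuine transport-type residuals of the top-order profiles, which I collect into $\mathcal{T}_{u,\va^2}$ and $\mathcal{T}_{v,\va^2}$, and the rest goes into $\mathcal{F}_{u,\va^3},\mathcal{F}_{v,\va^3}$.

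The main obstacle I anticipate is the book-keeping at the degenerate wall $y=0$: the elliptic systems for $\mathbf{u}_e^{i,-}$ have coefficients with a $\mu''/\mu$-type singularity, so one must use (\ref{0.8}) and (\ref{th.0.3}) with $k$ large enough to propagate $C^{k}$-regularity through arbitrarily many iterations; simultaneously, commuting derivatives with the anisotropic scaling $Y_-=y/\va^{1/3}$ requires that $\mu'''$ (and higher derivatives) vanish fast enough near $y=0$ so that the source terms generated at order $i$ are truly $O(\va^{1+i/3})$ rather than merely formally so. All other steps are relatively mechanical applications of standard parabolic/elliptic theory on the channel, together with the matching identities $\va^{1/3}\p_{Y_-}=\p_y$ and $\va^{1/2}\p_{Y_+}=-\p_y$ used to translate decay in $Y_\pm$ into the min-bounds of (\ref{2.2.3})--(\ref{2.2.5}).
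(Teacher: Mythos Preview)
Your overall strategy matches the paper's: alternate Euler correctors (solving the elliptic problem with the $\mu''/\mu$ lower-order term) and weak boundary layer correctors at the two scales $Y_-=y/\va^{1/3}$ and $Y_+=(2-y)/\va^{1/2}$, iterate to order $M$, and read off the pointwise bounds from the explicit form of each piece. Two technical points, however, are missing or mis-stated.

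First, you do not address how the $+$ and $-$ hierarchies decouple. A priori, the Euler corrector $\mathbf{u}_e^{i,+}$ (built to repair a mismatch at $y=2$) could itself create a new tangential error at $y=0$, and vice versa, which would couple the two towers and mix the scales $\va^{1/2}$ and $\va^{1/3}$. The paper handles this by imposing the \emph{Neumann} condition $\p_y v_e^{i,+}|_{y=0}=0$ on the opposite wall (and $\p_y v_e^{i,-}|_{y=2}=0$), and then proving via a barrier/maximum-principle argument (Lemma~\ref{lem2.2}) that in fact $u_e^{i,+}=v_e^{i,+}=0$ on $y=0$ and $u_e^{i,-}=v_e^{i,-}=0$ on $y=2$. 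This step is not ``relatively mechanical'' and is what allows the iteration to run as two independent chains.

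Second, your identification of $\mathcal{T}_{u,\va^2},\mathcal{T}_{v,\va^2}$ as ``transport-type residuals of the top-order profiles'' is not what the paper does. In the paper $\mathcal{T}_{u,\va^2}=\mathcal{C}_{Euler,u}$ and $\mathcal{T}_{v,\va^2}=\mathcal{C}_{Euler,v}$ are the quadratic self-interaction and viscous residuals of the \emph{Euler} part $u_s^E$ of the expansion (see (\ref{euler.2})--(\ref{euler.3}) and (\ref{asy.euler.0})); these sit genuinely at $O(\va^2)$ and cannot be reduced by expanding further. The boundary-layer residuals, by contrast, can be pushed to $O(\va^N)$ for any $N$ by taking $M$ large enough and constitute $\mathcal{F}_{u,\va^3},\mathcal{F}_{v,\va^3}$. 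A minor related point: solvability of $-\Delta v_e^1+(\mu''/\mu)v_e^1=\mu'''/\mu$ comes from the smallness of $|\mu''/\mu|_{L^\infty}$ (hypothesis (\ref{0.8})), not from smallness of $L$.
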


\begin{remark}
In the case when $\alpha_2>0$ in the absence of any external force, as the Euler profile $\mau^0$ satisfies
the no-slip boundary condition, we have $v=0$ on $y=0$ and $y=2$. For this case we can still construct the first Euler
corrector $\mau_e^1$ and the corresponding boundary layer correctors similar to the case $\alpha_2=0$. However,
as the equations for the Euler corrector have degenerate coefficients near $y=0$, we can not continue the expansion
due to the eigenvalue problem of the Laplace operator. In fact, if $v_e\neq 0$ on $y=0$, since $\mu''\sim-2\alpha_2$,
we shall have $\mu''/\mu\in (-\infty,0)$ in $\Omega$. Thus, the equation (\ref{divide.1}) is not well-posed.
 So in this case, the first weak boundary layer corrector is also the finally one, which do not degenerate when $Y$ tends to infinity.
 As a result, we can not obtain smaller remainders. So, we have to take $\mau_s=\mau_0$ directly when $\alpha_2>0$.
\end{remark}

\subsection{Euler correctors}

In this subsection we will study the construction of Euler correctors. The equations satisfied by the first Euler corrector are obtained by collecting the $\bigO(\eps)$ order Euler terms from (\ref{remain1}) - (\ref{remain2}), and is now shown:
\begin{equation} \label{euler.4}
\begin{cases}
  \mu \p_x u^1_e + \mu' v^1_e + \p_x  P^1_e = \mu''(y) \\
  \mu \p_x v^1_e + \p_y P^1_e = 0, \\
  \p_x u^1_e + \p_y v^1_e = 0, \\
  \partial_xv^1_e|_{x = 0} = 0,  v^1_e|_{x = L} = 0, \quad  v^1_e|_{y = 0} = v^1_e|_{y = 2} = 0.
\end{cases}
\end{equation}

By going to the vorticity formulation, we arrive at the following problem:
\begin{align}
-\mu \Delta v^1_e + \mu'' v^1_e = \mu'''(y), \hspace{3 mm} \hspace{3 mm} u^1_e := -\int_0^x v^1_{ey}(s,y)ds.\label{cou.1}
\end{align}
Then we divide (\ref{cou.1}) by $\mu$ to obtain:
\begin{align} \label{divide.1}
-\Delta v^1_e + \frac{\mu''}{\mu} v^1_e = \frac{\mu'''}{\mu}.
\end{align}

Thanks to the degeneration of $\mu''$ near $y=0$, If we take $|\f{\mu''}{\mu}|_{L^\infty}$ small enough, equation (\ref{divide.1}) will be well-posed. Next we turn to the construction of the rest Euler correctors. Due to the different scales of the weak boundary layer correctors near $y=0$ and $y=2$, we will have to use different Euler correctors to rectify the velocity $v_p^{i,\pm}$ from weak boundary layers correctors.  The system satisfied by the rest Euler correctors are shown as $(i=2,...M)$:
\begin{equation} \label{v2e.BVP}
\begin{cases}
  \mu \p_x u^{i,+}_e + \mu' v^{i,+}_e + \p_x  P^{i,+}_e = 0  \\
  \mu \p_x v^{i,+}_e + \p_y P^{i,+}_e = 0, \\
  \p_x u^{i,+}_e + \p_y v^{i,+}_e = 0, \\
 \partial_x v^{i,+}_e|_{x = 0} =0,\  v^{i,+}_e|_{x = L}=0,\ v^{i,+}_e|_{y = 2} = -v^{i-1,+}_p|_{y=2}, \hspace{3 mm} \partial_yv^{i,+}_e|_{y = 0} =0.
\end{cases}
\end{equation}
and
\begin{equation} \label{v2e.BVP1}
\begin{cases}
  \mu \p_x u^{i,-}_e + \mu' v^{i,-}_e + \p_x  P^{i,-}_e = 0  \\
  \mu \p_x v^{i,-}_e + \p_y P^{i,-}_e = 0, \\
  \p_x u^{i,-}_e + \p_y v^{i,-}_e = 0, \\
 \partial_x v^{i,-}_e|_{x = 0} =0,\  v^{i,-}_e|_{x = L}=0,\ \partial_yv^{i,-}_e|_{y = 2} = 0, \hspace{3 mm} v^{i,-}_e|_{y = 0} =-v^{i-1,-}_p|_{y=0}.
\end{cases}
\end{equation}
Going to vorticity produces the homogeneous system:
\begin{align}
- \Delta v^{i,\pm}_e + \f{\mu''}{\mu} v^{i,\pm}_e =0,\hspace{3 mm} u^{i,\pm}_e := -\int_0^x v^{i,\pm}_{ey}(s,y)ds.\label{asy.7}
\end{align}

This procedure contributes the error terms to the remainders
\begin{align}\label{euler.2}
\mathcal{C}_{Euler,u} := & (u_s^E - \mu) \p_x (u_s^E - \mu) + v_s^E \p_y (u_s^E - \mu) - \eps \Delta (u_s^E - \mu) , \\\label{euler.3}
\mathcal{C}_{Euler,v} := & (u_s^E - \mu) \p_x v_s^E + v_s^E \p_y v_s^E - \eps \Delta v_s^E,
\end{align}
where $u_s^E,v_s^E$ are defined in (\ref{euler.0}). We can observe that $\mathcal{C}_{Euler,u}, \mathcal{C}_{Euler,v}$ are in order $O(\eps^2)$.

We point out  that as we put the boundary conditions $\partial_yv^{i,-}_e = 0$ on $y=2$, and $\partial_yv^{i,+}_e = 0$ on $y=0$ for $i=2,3,...,M$, by the divergence free condition and the maximum principle of elliptic equations we can prove that $u_e^{i,+}=v_e^{i,+}=0$ on $y=0$, i.e. the Euler correctors $\mau_e^{i,+}$ will not cause the mismatch of the velocity on $y=0$. On the other hand, we have  $u_e^{i,-}=v_e^{i,-}=0$ on $y=2$, i.e. the Euler correctors $\mau_e^{i,-}$ will not cause the mismatch of the velocity on $y=2$.

 Precisely, we have the following Lemma for Euler correctors:
\begin{lemma}\label{lem2.2}
Assume  that $\mu(y)$ satisfies the conditions in Theorem \ref{thm2.2},  then there exists a unique triple $(u_e^1,v_e^1,P_e^1)$ satisfying system (\ref{euler.4}), and unique triples
$(u^{i,\pm}_e,v^{i,\pm}_e,P^{i,\pm}_e)$, $i=2,...,M$, satisfying system   (\ref{v2e.BVP}) and (\ref{v2e.BVP1}). Besides, we have
\begin{equation}
u_e^{i,+}=v_e^{i,+}=0\ \text{on} \ y=0;\ \ u_e^{i,-}=v_e^{i,-}=0\ \text{on} \ y=2,\ i=2,...M,\label{asy.6}
\end{equation}
and
\begin{align}\label{asy.5}
\| \p_x^l \p_y^m \{ u^{1}_e, v^{1}_e ,P^{1}_e\}\|+\| \p_x^l \p_y^m \{ u^{i,\pm}_e, v^{i,\pm}_e ,P^{i,\pm}_e\}\|
\leq C(m,l)|\frac{\mu'''}{\mu}|_{C^k},
\end{align}
where $m,l\geq0$ are integers satisfying $m+l\leq k+2$.
\end{lemma}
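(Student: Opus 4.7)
The plan is to reduce each corrector system to a single scalar elliptic boundary value problem for the vertical velocity $v$, as already set up in the vorticity formulations (\ref{cou.1})--(\ref{divide.1}) and (\ref{asy.7}), and then invoke the standard variational theory. For the first Euler corrector, equation (\ref{divide.1}) reads
\[
-\Delta v_e^1 + \frac{\mu''}{\mu} v_e^1 = \frac{\mu'''}{\mu} \quad \text{in } \Omega,
\]
with Dirichlet data on $\{y=0\}\cup\{y=2\}\cup\{x=L\}$ and Neumann data on $\{x=0\}$. I would work in the closed subspace $V = \{w \in H^1(\Omega) : w|_{y=0,2}=0,\ w|_{x=L}=0\}$ and study the bilinear form $a(w,\phi) := \int_\Omega \nabla w\cdot\nabla\phi + \int_\Omega(\mu''/\mu)\,w\phi$. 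Poincar\'e's inequality on $V$, combined with the smallness assumption (\ref{0.8}), yields coercivity of $a$, so Lax--Milgram produces a unique weak solution. Iterated elliptic regularity, applied up to order $k+2$ thanks to the regularity of $\mu'''/\mu$, then delivers (\ref{asy.5}) for $v_e^1$; the horizontal velocity is recovered from the defining formula $u_e^1 := -\int_0^x v_{ey}^1\,ds$, and the pressure is obtained by integrating the momentum equations of (\ref{euler.4}).

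For the higher-order correctors with $i \ge 2$, the systems (\ref{v2e.BVP})--(\ref{v2e.BVP1}) collapse to the homogeneous equation (\ref{asy.7}) for the same elliptic operator, but now with inhomogeneous Dirichlet data inherited from the previously constructed weak boundary-layer profile $v_p^{i-1,\pm}|_{y=0,2}$. I would remove this data by a smooth lifting supported near the relevant wall, subtract to land on a homogeneous problem in the coercive class $V$, and apply Lax--Milgram once more. The bound (\ref{asy.5}) then follows from elliptic regularity, with the factor $|\mu'''/\mu|_{C^k}$ entering as the cumulative bound propagated inductively along the chain (\ref{euler.4})--(\ref{v2e.BVP1}).

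The subtlest step, and the one I expect to be the main obstacle, is the vanishing property (\ref{asy.6}). The horizontal part is immediate: evaluating $u_e^{i,+}(x,y) = -\int_0^x v_{ey}^{i,+}(s,y)\,ds$ at $y=0$ and invoking the Neumann condition $\partial_y v_e^{i,+}|_{y=0}=0$ gives $u_e^{i,+}|_{y=0}=0$, and the symmetric argument handles $u_e^{i,-}|_{y=2}=0$. The claim $v_e^{i,+}|_{y=0}=0$ is more delicate, since the BVP (\ref{v2e.BVP}) only prescribes Neumann data there. My strategy is to exploit the homogeneity of (\ref{asy.7}) and an even reflection of $v_e^{i,+}$ across $y=0$ to transport the problem to the doubled strip $(0,L)\times(-2,2)$, on which the Neumann condition is automatically absorbed and the coefficient $\mu''/\mu$ extends evenly; the weak maximum principle, applicable because of the smallness of $\mu''/\mu$ from (\ref{0.8}), together with the fact that only the single trace on $\{y=\pm 2\}$ carries nontrivial Dirichlet data, pins the value of $v_e^{i,+}$ on $y=0$ to zero. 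The symmetric treatment then yields $v_e^{i,-}|_{y=2}=0$. Once (\ref{asy.6}) is verified, feeding the improved boundary information back into the elliptic regularity estimate closes (\ref{asy.5}).
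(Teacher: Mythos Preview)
Your variational existence argument and the elliptic regularity chain for (\ref{asy.5}) are fine and line up with what the paper does. The derivation of $u_e^{i,+}|_{y=0}=0$ from the Neumann condition and the formula $u_e^{i,+}=-\int_0^x v_{ey}^{i,+}\,ds$ is also correct and is exactly the paper's first step.

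The genuine gap is in your argument for $v_e^{i,+}|_{y=0}=0$. An even reflection of $v_e^{i,+}$ across $y=0$ absorbs the Neumann condition, but an even function has \emph{no} constraint on its value at the axis of symmetry; it is the \emph{odd} reflection that forces vanishing there, and that one is incompatible with a nonzero Neumann-side trace. After your reflection, $y=0$ becomes an interior line of the doubled strip, and the weak maximum principle only traps $v_e^{i,+}$ between the extremal boundary values coming from $\{y=\pm 2\}$ and $\{x=L\}$; it gives no reason for the interior trace at $y=0$ to be zero. A one–dimensional model already shows this: the solution of $-u''=0$ on $(0,1)$ with $u'(0)=0$, $u(1)=1$ is $u\equiv 1$, which is even under reflection yet has $u(0)=1$.

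The paper takes a different route for this step. It keeps the problem on the original half–strip, performs only the even extension in $x$ (to dispose of the Neumann condition at $x=0$), cuts off in $y$ so that $\chi v_e^{i,+}$ vanishes on $\{y=2\}\cup\{x=\pm L\}$, and then runs a barrier comparison: with $\phi^\sigma=K(y+\sigma)^\alpha$, $0<\alpha<1$ and $K$ large but independent of $\sigma$, one has $\Delta\phi^\sigma<0$ and hence $\Delta(\chi v_e^{i,+}-\phi^\sigma)\ge 0$; the singular normal slope $\partial_y\phi^\sigma|_{y=0}=K\alpha\sigma^{\alpha-1}$ is used to rule out the boundary $\{y=0\}$ in the comparison, yielding $|\chi v_e^{i,+}|\le K(y+\sigma)^\alpha$ near $y=0$. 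Sending $\sigma\downarrow 0$ then gives $v_e^{i,+}(x,0)=0$. If you want to repair your argument, you should replace the even-reflection-in-$y$ step by a barrier/comparison argument of this type; the reflection idea alone cannot close (\ref{asy.6}).
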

\begin{proof}
First of all, under the assumption that $|\f{\mu''}{\mu}|_{L^{\infty}}$ is small enough, the existence of $(u_e^1,v_e^1,P_e^1)$, $(u^{i,\pm}_e,v^{i,\pm}_e,P^{i,\pm}_e)$ and estimates (\ref{asy.5}) can be easily obtained by theory of elliptic equations.

Then we need to prove $u_e^{i,+}=v_e^{i,+}=0$ on $y=0$ and $u_e^{i,-}=v_e^{i,-}=0$ on $y=2$.

We will first prove $u_e^{i,+}=v_e^{i,+}=0$ on $y=0$ by weak maximum principle of elliptic equations.

 First, by (\ref{asy.7}), we easily have
  $$u_e^{i,+}(x,0)=-\int_0^xv_{ey}^{i,+}dx=0.$$
Next, for fixed $\sigma>0$ small enough, since we have already known by (\ref{asy.5}) that $v_e^{i,+}$  is bounded, we can define a barrier function
$$\phi^\sigma=K(y+\sigma)^\alpha,\ 0<\alpha<1,$$
where $K$ is a constant satisfying
$$K\geq C(|\f{\mu''}{\mu}|_{L^\infty}+\alpha_0)$$
with $C$ being a sufficiently large constant.
Then we have
\begin{equation}\Delta \phi^\sigma=\alpha(\alpha-1)K(y+\sigma)^{\alpha-2}<0.\label{asy.1}\end{equation}
Now we cut $v_e^{i,+}$ by the cut-off function $\chi(y)$ defined in (\ref{cutoff}).
Thanks to the boundary conditions $\partial_xv_e^{i,+}|_{x=0}=0$, we can make an even extension of $\chi(y)v_e^{i,+}$ with respect to $x=0$ and the new function, still denoted by  $\chi(y)v_e^{i,+}$,  belong to $C^2(\Omega^*)\cap C^0(\bar \Omega^*)$, here $\Omega^*=(-L,L)\times(0,2)$.
Recalling that $K$ is large enough,  we have by (\ref{asy.7}) and (\ref{asy.1})
$$\Delta(\chi v_e^{i,+}-\phi^\sigma)\geq 0\ \text{ in}\ \Omega^*.$$
Besides, direct computation shows that $$ \partial_y(\chi v_e^{i,+}-\phi^\sigma)|_{y=0}<0.$$
So we can not obtain the maximum value of $\chi v_e^{i,+}-\phi^\sigma$ on $y=0$. More over, we also have
$$\ (\chi v_e^{i,+}-\phi^\sigma)|_{\{y=2\}\cup\{x=L\}\cup\{x=-L\}}<0.$$
 The weak maximum principle for elliptic equations implies that
\begin{equation*}\chi v_e^{i,+}-\phi^\sigma\leq0\ \text{ in}\ \Omega^*.\end{equation*}
If we replace $\chi v_e^{i,+}$ by $-\phi^\sigma$, and $\phi^\sigma$ by $-\chi v_e^{i,+}$ above, we can obtain
$$-\phi^\sigma\leq-\chi v_e^{i,+}\ \text{ in}\ \Omega^*.$$
Combing above, we have proved that for any $0<\alpha<1$
$$|v_e^{i,+}|\leq K(y+\sigma)^\alpha,\ \text{when} \ 0\leq y\leq \f12,$$
for any $\sigma>0$ small enough, which implies immediately $v_e^{i,+}=0$ on $y=0$. Similarly we can also prove $u_e^{i,-}=v_e^{i,-}=0$ on $ y=2$ and we complete the proof of the Lemma.
\end{proof}

\subsection{Weak boundary layers correctors}

 In this subsection, we will only give the construction for $\mau^{i,+}_p$ near $y=2$. The construction of  $\mau_p^{i,-}$ near the boundary $y=0$ is exactly the same as the process in \cite{IZ1}. For the completeness of the article, we would give the sketch of the construction of  $\mau_p^{i,-}$ in Appendix.

\subsubsection{ Construction of $\mau^{i,+}_p$}

As there would be no degeneration in the coefficients of the linear parabolic equations for the weak boundary layer correctors near $y=2$, we would use the scale $Y_+=\f{2-y}{\va^{\f12}}$ here.
Using $(\ref{euler.4})$, the leading $\bigO(\eps)$ order boundary layer terms from (\ref{remain1}) are:
$$\mu \p_x u^{1, +}_p+\partial_xP^{1,+}_p - \p_{Y_+Y_+}u^{1, +}_p = 0.$$
The leading $\bigO(\eps^{\f12})$ order boundary layer terms from (\ref{remain2}) is:
$$\p_{Y_+} P^{1,+}_p = 0.$$
It is easy to obtain that $P^{1,+}_p\equiv0$, and we consider the following system(as $\mu(2)=2$)
\begin{eqnarray} \label{pr.BVP.1}
\begin{cases}
2 \p_x u^{1,0, +}_p - \p_{Y_+Y_+}u^{1,0, +}_p = 0, \\
u^{1,0, +}_p|_{x = 0} =0 , \hspace{3 mm} u^{1,0, +}_p|_{Y_+ = 0} = -u^1_e|_{y = 2}, u^{1,0, +}_p|_{Y_+ \rightarrow \infty} = 0 \\
v^{1,0, +}_p = \int_{Y_+}^\infty \p_x u^{1,0, +}_p.
\end{cases}
\end{eqnarray}
 We now cut-off $\mau_p^{1,0,+}=(u_p^{1,0,+},v_p^{1,0,+})$ to obtain the first boundary layer corrector $\mau_p^{1,+}=(u_p^{1,+},v_p^{1,+})$ near $y=2$:
\begin{align} \label{cut.off.2}
u^{1,+}_p = \chi(\frac{\sqrt{\eps}Y_+}{a_0}) u^{1,0, +}_p - \frac{\sqrt{\eps}}{a_0} \chi'(\frac{\sqrt{\eps}Y_+}{a_0}) \int_0^x v_p^{1,0,+}, \hspace{2 mm} v^{1,+}_p := \chi(\frac{\sqrt{\eps}Y_+}{a_0}) v^{1,0, +}_p,
\end{align}
where $\chi$ is the cut-off function defined in (\ref{cutoff}), $a_0>0$ is a constant small enough.
After cutting off (\ref{cut.off.2}), we have the  contribution with $O(\va^{\f12})$ order to the next layer
\begin{eqnarray}
\mathcal{C}_{cut}^{1,+}=&&\f \mu{a_0}\eps^{\f12}\chi'v_p^{1,0,+}+3\f1{a_0}\eps^{\f12}\chi'\partial_{Y_+}u_p^{1,0,+}
+3\f1{a_0^2}\eps\chi''u_p^{1,0,+}\nonumber\\
&&-\f1{a_0^3}\eps^{\f32}\chi'''\int_0^xv_p^{1,0,+}(s,Y_+)ds.
\end{eqnarray}

We also have higher order terms that contribute to the error:
\begin{eqnarray}
\mathcal{C}^{1,+}_{quad} := &&\eps^2 (u^1_e + u^{1,+}_p) \p_x u^{1,+}_p + \eps^2 u^{1,+}_p \p_x u^1_e + \eps^{\frac 32} v^1_e u^{1,+}_{pY_+}  \nonumber\\
&&+ \eps^{\frac 52} v^{1,+}_p u^1_{ey}+ \eps^2 v^{1,+}_p u^{1,+}_{pY_+} - \eps^2 u^{1,+}_{pxx}.
\end{eqnarray}

Besides, due to the approximation of $\mu(y)$ by $\mu(2)=2$ in the support of the cut-off function $\chi(\frac{\eps^{\frac 12} Y_+}{a_0})$, we have another contribution with $O(\va^{\f12})$ order to the error defined by
\begin{align}
\mathcal{C}_{approx}^{1£¬+} := &  (2- \mu(y) )[\chi(\frac{2-y}{a_0}) \p_x u^{1,0,+}_p+\f1{a_0}\eps^{\f12}\chi'v_p^{1,0,+}].
\end{align}
For the higher order terms in the second momentum equation, we will use our auxiliary pressure to move it to the top equation. This is achieved by defining the first auxiliary pressure, $P^{1,a,+}_p$ to zero out the terms contributed from
\begin{eqnarray*}
&&\eps^{\frac 32} (\mu + \eps u^1_e) v^{1,+}_{px} + \eps u^{1,+}_p (\eps v^1_{ex} + \eps^{\frac 32} v^{1,+}_{px}) + \eps^2 v^1_e v^{1,+}_{pY_+} + \eps^{\frac 52} v^{1,+}_p v^{1,+}_{pY_+} + \eps^{\frac 52} v^{1,+}_p v^1_{ey} \\
&&- \eps^{\frac 32} v^{1,+}_{pY_+Y_+} - \eps^{\frac 52} v^{1,+}_{pxx} + \eps^{\frac 32} P^{1,a,+}_{pY_+} =0,
\end{eqnarray*}
which therefore motivates our definition of
\begin{align}  \n
- \eps^{\frac 32} P^{1,a,+}_{P} := & \int_{Y_+}^{\infty} \Big( \eps^{\frac 32} (\mu + \eps u^1_e) v^{1,+}_{px} + \eps u^{1,+}_p (\eps v^1_{ex} + \eps^{\frac 32} v^{1,+}_{px}) + \eps^2 v^1_e v^{1,+}_{pY_+}   \\
&+ \eps^{\frac 52} v^{1,+}_p v^{1,+}_{pY_+}+ \eps^{\frac 52} v^{1,+}_p v^1_{ey} - \eps^{\frac 32} v^{1,+}_{pY_+Y_+} - \eps^{\frac 52} v^{1,+}_{pxx} \Big) \ud Y'.
\end{align}
As a result, we can define the forcing for the next order weak boundary layer via
\begin{align}
F^{2,+} := \eps^{- \frac 32} \Big( -\eps^{\frac 32} \mathcal{C}^{1,+}_{cut} +  \eps^{\frac 32} \mathcal{C}^{1,+}_{approx} + \mathcal{C}^{1,+}_{quad} - \eps^{2} \p_x P^{1,a,+}_P \Big).
\end{align}

 Computing $F^{i,+}$ when  $i=3,...M$ are almost in the same manner. Since we will put the terms with $H^2$ norm less than  $\va^{\f74+\gamma}$  into the remainders $(\mcf_u,\mcf_v)$, we only need the auxiliary pressure $P_p^{i,a,+}$ for $i=1,2,3$. When $i>3$, we will take $P_p^{i,a,+}=0$. Please also notify that the interaction terms containing $u_e^{i,-}u_p^{j,+}$  in $F^{i,+}$ with different scale would not cause any trouble here. In fact, we only have $\va^{\f73}u_e^{2,-}u_p^{1,+}$ in $\mathcal{C}^{2,+}_{quad}$ and $\va^{\f73}v_e^{2,-}v_p^{1,+}$ in $P^{2,a,+}_P$. Both of the two terms containing a boundary layer corrector near $y=2$, which would decay rapidly when $Y_+\rightarrow\infty$. Other interaction terms with form $u_e^{i,-}u_p^{j,+}$ will be put into the remainders $(\mcf_u,\mcf_v)$

 We thus derive the general equation that we study for the boundary layer correctors when $1 \le i \le M - 1$ (that is, all but the last layer):
\begin{align*}
&2 \p_x u^{i,0, +}_p - \p_{Y_+Y_+}u^{i,0, +}_p = F^{i,+}, \\
&u^{i,0, +}_p|_{x = 0} =0 , \hspace{3 mm} u^{i,0, +}_p|_{Y_+ = 0} = -u^{i,+}_e|_{y = 2},\  u^{i,0, +}_p|_{Y_+ \rightarrow \infty} = 0 \\
&v^{i,0, +}_p = \int_{Y_+}^\infty \p_x u^{i,0, +}_p.
\end{align*}
Let us therefore consider the abstract problem (dropping indices):
\begin{subequations}
\begin{align} \label{A.8}
&2 \p_x u  - \p_{Y}^2 u = F, \qquad (x, Y) \in (0, L) \times (0, \infty) \\
&v := \int_Y^\infty \p_x u \ud Y', \\ \label{A.9}
&u|_{x = 0} = 0, \qquad u|_{Y = 0} = g(x), \qquad u|_{Y \rightarrow \infty} = 0.
\end{align}
\end{subequations}
For this abstract problem, we have the following Lemma:

\begin{lemma} \label{A.lemma3} Assume that $F(x,Y)$ decays rapidly at infinity, i.e., for any $m\geq0$, there is a constant $M>0$,
such that
\begin{align*}
\|(1+Y)^m \p_x^n \p_Y^{l}F \| \leq M \text{ for } 0 \le 2n+ l \le K,
\end{align*}
where $K>0$ is a constant sufficiently large. Then there exists a unique solution $(u, v)$ to \eqref{A.8}--\eqref{A.9}
that satisfies the following estimate:
\begin{equation}
\|(1+Y)^m \partial_x^n \partial_Y^{l} \{ u, v \}\| \leq C(m,n,l)(M+\|g\|_{H^{K+1}}) \text{ for any } 2n + l \leq K+2, \label{2.2.2}
\end{equation}
where the constant $C$ does not depend on $Y$.
\end{lemma}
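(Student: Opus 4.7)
The plan is to treat the problem as a standard parabolic equation with $x$ in the role of time on the half-line $Y\in(0,\infty)$, reduce to a homogeneous boundary condition, solve by Galerkin/semigroup methods, and then derive weighted-in-$Y$ energy estimates that can be iterated in the $x$-derivative.

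First I would homogenize the boundary condition at $Y=0$. Introduce the lift $\phi(x,Y):=g(x)e^{-Y}$ and set $\tilde u:=u-\phi$. Then $\tilde u$ solves $2\partial_x\tilde u-\partial_Y^2\tilde u=\tilde F:=F-2g'(x)e^{-Y}+g(x)e^{-Y}$, with $\tilde u|_{Y=0}=0$, $\tilde u|_{x=0}=-g(0)e^{-Y}$ and $\tilde u|_{Y\to\infty}=0$. Existence and uniqueness of $\tilde u$ in weighted Sobolev spaces on the parabolic half-strip $(0,L)\times(0,\infty)$ is classical: one can use the standard heat-kernel representation on the half-line (via odd reflection to enforce the Dirichlet condition) or a Galerkin scheme in the Hilbert space $L^2((1+Y)^{2m}\ud Y)$. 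Because $g\in H^{K+1}$ and $\tilde F$ enjoys the same weighted decay as $F$, the Cauchy data is compatible to any reasonable order.

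For the weighted energy estimates, the basic step is to multiply the equation by $(1+Y)^{2m}\tilde u$ and integrate over $(0,x_0)\times(0,\infty)$. The convective term yields $\partial_x\int(1+Y)^{2m}\tilde u^2$, while two integrations by parts of $\partial_Y^2\tilde u$ produce $\int(1+Y)^{2m}(\partial_Y\tilde u)^2$ plus a lower-order commutator $\int (1+Y)^{2m-1}\tilde u\,\partial_Y\tilde u$, which is absorbed by Cauchy--Schwarz. The boundary term at $Y=0$ vanishes because $\tilde u|_{Y=0}=0$. This gives
\begin{equation*}
\sup_{x\in[0,L]}\|(1+Y)^m\tilde u(x,\cdot)\|^2+\int_0^L\|(1+Y)^m\partial_Y\tilde u\|^2\ud x\lesssim \|\tilde F\|_{\text{wgt}}^2+\|g\|_{H^1}^2.
\end{equation*}
I would then iterate: differentiating the equation $n$ times in $x$ preserves its structure (with modified forcing $\partial_x^n\tilde F$ and modified initial data obtainable from the PDE), so the same energy identity gives bounds on $(1+Y)^m\partial_x^n\tilde u$ and $(1+Y)^m\partial_x^n\partial_Y\tilde u$. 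The pure $\partial_Y$-derivatives are recovered algebraically from the equation itself, since $\partial_Y^2\tilde u=2\partial_x\tilde u-\tilde F$, so each pair of $\partial_Y$ derivatives costs one $\partial_x$ derivative plus one derivative on $F$; this is precisely the parabolic scaling $2n+l\leq K+2$ in the statement.

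Finally, the estimates for $v$ follow from $v=\int_Y^\infty \partial_x u\,\ud Y'$. Weighted bounds on $v$ are inherited from weighted bounds on $\partial_x u$ using $(1+Y)^m|v(x,Y)|\leq (1+Y)^m\int_Y^\infty|\partial_x u(x,Y')|\,\ud Y'\lesssim \|(1+Y')^{m+1}\partial_x u(x,\cdot)\|$, and $\partial_Y v=-\partial_x u$ reduces higher $Y$-derivatives of $v$ to $Y$-derivatives of $\partial_x u$. The $x$-derivatives of $v$ are handled by interchanging $\partial_x$ with the integral against $Y$ and reusing the weighted estimates for $\partial_x^{n+1}u$.

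The main obstacle I anticipate is not the energy estimate itself but the bookkeeping of boundary/compatibility data at the corner $(x,Y)=(0,0)$ when propagating high-order regularity. To close the weighted estimate for $\partial_x^n\tilde u$ one needs the initial condition $\partial_x^n\tilde u|_{x=0}$, which must be generated from the PDE: $\partial_x^n\tilde u|_{x=0}=\frac{1}{2}\bigl(\partial_Y^2\partial_x^{n-1}\tilde u+\partial_x^{n-1}\tilde F\bigr)\big|_{x=0}$. Carrying out this recursion (together with the fact that the lift $\phi$ contributes polynomially in $g$ and its derivatives) is what produces the dependence on $\|g\|_{H^{K+1}}$ on the right-hand side of \eqref{2.2.2}. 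Once these compatibilities are handled, all remaining steps are standard weighted parabolic energy arguments with polynomial weights.
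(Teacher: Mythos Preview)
Your approach is correct and follows essentially the same strategy as the paper: homogenize the boundary datum, run weighted parabolic energy estimates, differentiate in $x$ and iterate, and recover the $\partial_Y$-derivatives from the equation. The execution differs only cosmetically: the paper lifts with a compactly supported cutoff $\chi(Y)$ (chosen so that $\int_0^\infty\chi=0$, which is convenient for $v$) rather than $e^{-Y}$, and the paper multiplies by $u_x$ rather than $u$, which yields $\sup_x\|(1+Y)^m u_Y\|^2+\|(1+Y)^m u_x\|^2$ directly and slightly streamlines the passage to $v$.

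One small technical point: your pointwise bound $(1+Y)^m|v|\lesssim\|(1+Y')^{m+1}\partial_x u(x,\cdot)\|_{L^2_Y}$ actually gives $(1+Y)^m|v|\lesssim(1+Y)^{-1/2}\|(1+Y')^{m+1}\partial_x u\|_{L^2_Y}$, and $(1+Y)^{-1/2}\notin L^2(0,\infty)$, so this does not close the $L^2$ estimate for $(1+Y)^m v$ as written. The fix is trivial---use weight $m+2$ (or $m+1+\delta$) on $\partial_x u$ instead of $m+1$; since your hypothesis gives weighted control of $u$ for every $m$, this costs nothing.
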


\begin{proof}  First, a standard homogenization enables us to consider the Dirichlet problem, $g = 0$, up to modifying the forcing $F$. Indeed, fixing a cut-off function $\chi(Y)$ so that $\chi(0) = 1, \chi(Y) = 0$, when $Y\geq2$, and $ \int_0^{\infty} \chi(Y') \ud Y' = 0$, we can consider the unknowns
\begin{align}
\tilde{u} := u - \chi(Y) g(x),\nonumber
\end{align}
which will satisfy the system
\begin{eqnarray}
&&2\p_x \tilde{u}  - \p_{Y}^2 \tilde{u} = \tilde{F}, \qquad (x, Y) \in (0, L) \times (0, \infty) \label{A.10}\\
&&\tilde{u}|_{x = 0} = 0, \qquad \tilde{u}|_{Y = 0} = 0, \qquad \tilde{u}|_{Y \rightarrow \infty} = 0.
\end{eqnarray}
Above, the modified forcing
\begin{align}
\tilde{F} := F -2\chi(Y) g'(x) +  g(x)\chi''(Y) .
\end{align}
We now drop the $\tilde{u}, \tilde{v}$ notation, and simply consider the homogenized problem above.

First we multiply (\ref{A.10}) by $u_x $ to obtain
\begin{align}
\frac{\p_x}{2} \int_0^\infty u_Y^2 \ud Y + \int_0^{\infty}  |\p_x u|^2 \ud Y  =  \int_0^{\infty} F u_x \ud Y.
\end{align}
Consequently, we have
\begin{eqnarray}
&&\int_\Omega u_x^2dYdx+\sup_{x\in[0,L]}\int_0^\infty u_Y^2dY\leq C\|F\|_{L^2(\Omega)}^2.
\end{eqnarray}
 From equation (\ref{A.10}), we have $u_x|_{x=0}=\f12 F(0,Y)$. Similarly, we can obtain from equation (\ref{A.10})
 $$|\partial_{x}^nu(0,\cdot)|_{L^2}\leq C(n)\|F\|_{H^{n+1}(\Omega)}.$$
 for any $n\geq0$.

 Now we take the derivative of equation (\ref{A.10}) with respect to $x$ for $n$ times.  Repeating the above process and using the resulting equation, we have
\begin{eqnarray}
\int_\Omega (\partial_x^nu_x)^2dYdx+\int_\Omega (\partial_x^{n}u_{YY})^2dYdx+\sup_{x\in[0,L]}\int_0^\infty (\partial_x^nu_Y)^2dY\leq C\|F\|_{H^{n+1}}^2.\nonumber
\end{eqnarray}
 To get weighted  estimates, we  multiply equation (\ref{A.10}) with $u_x (1+Y)^{2m}$ to have
 \begin{eqnarray}
 &&2\int_0^\infty(1+Y)^{2m}u_x^2dy+\f12\f{d}{dx}\int_0^\infty (1+Y)^{2m}(u_{Y})^2dy\nonumber\\
 =&&\int_0^\infty(1+Y)^{2m}u_xFdy-2m\int_0^\infty (1+Y)^{2m-1}u_{x}u_{Y}dy\nonumber\\
\leq &&C\|(1+Y)^mu_x(x,\cdot)\|_{L^2_y}\{\|(1+Y)^mF(x,\cdot)\|_{L^2_y}+\|(1+Y)^mu_Y(x,\cdot)\|_{L^2_y}\}.\nonumber
 \end{eqnarray}
 Consequently
 \begin{eqnarray}
 \|(1+Y)^mu_x\|_{L^2(\Omega)}+\sup_{x\in[0,L]}\|(1+Y)^mu_Y(x,\cdot)\|_{L^2_y}
\leq C(m)\|(1+Y)^mF\|_{L^2}.
 \end{eqnarray}
 Similarly, we can obtain
  \begin{eqnarray}
&& \|(1+Y)^m\partial_x^nu_x\|_{L^2(\Omega)}+\|(1+Y)^m\partial_x^nu_{YY}\|_{L^2(\Omega)}+\sup_{x\in[0,L]}\|(1+Y)^m\partial_x^nu_Y(x,\cdot)\|_{L^2_y}\nonumber\\
&&\leq C(m,n)(M+\|g\|_{H^{K+1}}).
 \end{eqnarray}
Finally, for any $m,n,l\geq0$, we use the differential operator $\partial_x^n\partial_Y^{2l}$ to act on equation (\ref{A.10}), and then  multiply the resulting equation with $(1+Y)^{2m}\partial_x^n\partial_Y^{2l}u_x$,  to get
\begin{eqnarray}
&& \|(1+Y)^m\partial_x^n\partial_Y^{2l}u_x\|_{L^2(\Omega)}+\|(1+Y)^m\partial_x^n\partial_Y^{2l+2}u\|_{L^2(\Omega)}\nonumber\\
&&+\sup_{x\in[0,L]}\|(1+Y)^m\partial_x^n\partial_Y^{2l+1}u(x,\cdot)\|_{L^2_y}\nonumber\\
\leq&& C(m,n,l)(M+\|g\|_{H^{K+1}}).
 \end{eqnarray}
 Then (\ref{2.2.2}) follows immediately.
\end{proof}

For $i = M$, we consider now
\begin{subequations}
\begin{align} \label{A.11}
&2\p_x u  - \p_{Y}^2 u = F, \qquad (x, Y) \in (0, L) \times (0, \infty) \\
&v := - \int_0^Y \p_x u \ud Y', \\ \label{A.12}
&u|_{x = 0} = 0, \qquad u|_{Y = 0} = g(x), \qquad \partial_Yu|_{Y \rightarrow \infty} = 0.
\end{align}
\end{subequations}
Compared to \eqref{A.8} -- \eqref{A.9}, the main difference here is $v(x,0)=0$. We would have the following lemma
\begin{lemma}\label{A.lemma4}
Assume  that $F(x,Y)$ decays rapidly at infinity,  i.e., for any $m\geq0$, there is a constant $M>0$,
such that
\begin{align*}
\|(1+Y)^m \p_x^n \p_Y^{l}F \| \leq M\ \text{ for } 0 \le 2n+ l \le K.
\end{align*}
Then, there exists a unique solution $(u, v)$ of \eqref{A.11}--\eqref{A.12}, satisfying the following estimates:
\begin{align*}
&\|(1+Y)^m \p_x^n \p_Y^{l} u\| \le  C(m,n,l)(M+\|g\|_{H^{K+1}})\ \text{ for any } 0 \le 2n + l \le K+2, \\
&\|(1+Y)^m \p_x^n \p_Y^{l} v_Y\| \le  C(m,n,l)(M+\|g\|_{H^{K+1}})\ \text{ for any } 0 \le 2n + l \le K+1, \\
&\|\p_x^n \{  \frac{v}{Y} \}\| \le   C(m,n,l)(M+\|g\|_{H^{K+1}})\ \text{ for any } 0 \le 2n \le K+2.
\end{align*}
\end{lemma}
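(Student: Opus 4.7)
The plan is to mimic the argument for Lemma~\ref{A.lemma3}, modifying it to accommodate the two structural differences: the boundary condition at infinity is now $\partial_Y u \to 0$ (instead of $u \to 0$), and $v(x, 0) = 0$ (instead of $v(x, \infty) = 0$). First I would homogenize by setting $\tilde u := u - \chi(Y) g(x)$ for a cut-off $\chi \in C_c^\infty([0, \infty))$ with $\chi(0) = 1$ and $\chi \equiv 0$ on $[2, \infty)$. Unlike in Lemma~\ref{A.lemma3}, no integral constraint on $\chi$ is needed here, since $v$ is defined as the integral starting from $Y = 0$. Compatibility at the corner gives $g(0) = u(0,0) = 0$, so $\tilde u$ solves $2 \tilde u_x - \tilde u_{YY} = \tilde F$ with homogeneous data $\tilde u|_{x = 0} = \tilde u|_{Y = 0} = 0$, $\partial_Y \tilde u \to 0$ at infinity, where $\tilde F := F - 2 \chi(Y) g'(x) + \chi''(Y) g(x)$ has weighted norms controlled by $M + \|g\|_{H^{K+1}}$.

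Next I would test with $(1+Y)^{2m} \tilde u_x$: the boundary term at $Y = 0$ vanishes because $\tilde u(x, 0) \equiv 0$ forces $\tilde u_x(x, 0) = 0$, and the term at $Y = \infty$ vanishes by the decay hypothesis. The extra cross term $2m \int (1+Y)^{2m-1} \tilde u_Y \tilde u_x\,dY$ is absorbed via Cauchy--Schwarz and an induction on $m$, yielding
\begin{equation*}
\|(1+Y)^m \tilde u_x\|_{L^2(\Omega)} + \sup_{x \in [0, L]} \|(1+Y)^m \tilde u_Y(x, \cdot)\|_{L^2_Y} \lesssim M + \|g\|_{H^{K+1}}.
\end{equation*}
Differentiating the equation in $x$ and iterating gives the analogous bound on $\partial_x^{n+1} \tilde u$, and applying $\partial_Y^{2l}$ to the equation together with the relation $\tilde u_{YY} = 2 \tilde u_x - \tilde F$ trades even $Y$-derivatives for $x$-derivatives. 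Odd-order $Y$-derivatives then follow from the associated supremum-in-$x$ estimates. Re-adding $\chi(Y) g(x)$ produces the first asserted bound on $u$ in the range $2n + l \le K + 2$.

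The two estimates for $v$ follow quickly. Since $v_Y = -u_x$, the bound on $\|(1+Y)^m \partial_x^n \partial_Y^l v_Y\|$ for $2n + l \le K + 1$ is precisely the previously obtained bound on $\|(1+Y)^m \partial_x^{n+1} \partial_Y^l u\|$, which lies in the admissible range $2n + l \le K + 2$. For the quotient, the identity $v(x, Y) = -\int_0^Y u_x(x, Y')\,dY'$ together with $v(x, 0) = 0$ puts us exactly in the setting of Hardy's inequality
\begin{equation*}
\Bigl\| \tfrac{1}{Y} \int_0^Y f(Y')\,dY' \Bigr\|_{L^2_Y} \le 2 \|f\|_{L^2_Y},
\end{equation*}
applied to $f = \partial_x^{n+1} u$, which yields $\|\partial_x^n (v/Y)\|_{L^2_Y} \le 2 \|\partial_x^{n+1} u\|_{L^2_Y}$; integrating in $x$ and invoking the $u$-bound closes the third estimate.

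The main obstacle is justifying rigorously the vanishing of the boundary term at $Y = \infty$ in every integration by parts, since the bare assumption $\partial_Y u \to 0$ does not by itself guarantee the integrability of $u$ and $u_x$ needed for the weighted $L^2$ norms to be a priori finite. I would handle this by first performing all the energy estimates with an additional smooth cut-off $\zeta_R(Y)$ supported in $[0, 2R]$ and equal to $1$ on $[0, R]$, showing that the resulting bounds are uniform in $R$ thanks to the decay of $\tilde F$ and the weight $(1+Y)^{2m}$, and then passing to the limit $R \to \infty$ via standard weak compactness. The critical structural feature distinguishing this lemma from Lemma~\ref{A.lemma3} is the identity $v(x, 0) = 0$, without which the Hardy step controlling $v/Y$ would not be available.
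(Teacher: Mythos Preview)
Your proposal is correct and follows precisely the route the paper indicates: the paper omits the proof entirely, stating only that it ``would be almost the same as Lemma~\ref{A.lemma3}'', and your argument is exactly that adaptation, with the necessary modifications for the Neumann condition at infinity and the definition $v=-\int_0^Y u_x$. Your explicit use of Hardy's inequality to control $v/Y$ and the cut-off $\zeta_R$ to justify the vanishing boundary contributions at $Y=\infty$ are natural details that the paper leaves implicit.
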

The proof of Lemma \ref{A.lemma4} would be almost the same as Lemma \ref{A.lemma3}
, and we omit the details here.

\subsection{Proof of Theorem \ref{thm2.2}}

Combing Lemma \ref{lem2.2}-Lemma \ref{A.lemma4}, Lemma \ref{A.lemma1}, Lemma \ref{A.lemma2}, we have the following Lemma:

\begin{lemma}\label{th.asy.1} Let $u^{i,\pm}_p, v^{i,\pm}_p$ be solutions obtained in the above process,  then we have for any $m \ge 0$,
\begin{align*}
&\|(1+Y)^m \p_x^n \p_Y^l \{ u^{i,\pm}_p, v^{i,\pm}_p \}\| \leq  C_{m,n,l}\alpha_0\text{ for } 1 \le i \le M - 1 \\
&\|(1+Y)^m \p_x^n \p_Y^l  u^{M,\pm}_p\| \leq  C_{m,n,l}\alpha_0 \text{ for } l \ge 1, \\
&\| \p_x^n \{ u^{M,\pm}_p,  \frac{v^{M,\pm}_p}{Y} \}\| \leq  C_{m,n,l}\alpha_0,
\end{align*}
where $n,l$ are integers  satisfying $2n + l \leq k+2$, $(u_p^{M,\pm},v_p^{M,\pm})$ are the last boundary layer correctors.
\end{lemma}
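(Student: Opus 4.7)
The plan is to argue by induction on the layer index $i$, using Lemma \ref{A.lemma3} as the black-box solver for $1 \le i \le M-1$ and Lemma \ref{A.lemma4} for the terminal layer $i = M$. In each step the task reduces to verifying that the forcing $F^{i,\pm}$ and the Dirichlet datum $g(x) = -u_e^{i,\pm}|_{y=0,2}$ satisfy the hypotheses of those lemmas with a bound of size $\alpha_0$, uniformly in $\eps$.

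For the base case $i=1$ the forcing vanishes (see \eqref{pr.BVP.1}), so only the boundary datum has to be controlled. Lemma \ref{lem2.2} gives $\|u_e^1\|_{H^{k+2}(\Omega)} \lesssim |\mu'''/\mu|_{C^k} \le \alpha_0$, and the trace theorem then yields $\|g\|_{H^{K+1}} \lesssim \alpha_0$ for any $K \le k$. Applying Lemma \ref{A.lemma3} produces the required weighted estimates on $(u_p^{1,+}, v_p^{1,+})$, and an identical argument, using Lemma \ref{A.lemma1}/\ref{A.lemma2} from the appendix in place of Lemma \ref{A.lemma3}, handles $(u_p^{1,-}, v_p^{1,-})$.

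For the inductive step, suppose the claim holds for all indices less than $i$. Inspect how $F^{i,\pm}$ is assembled: it is the sum of the cut-off residuum $\mathcal{C}^{i-1,\pm}_{cut}$, the approximation residuum $\mathcal{C}^{i-1,\pm}_{approx}$ coming from replacing $\mu(y)$ by $\mu(2)=2$, the quadratic interaction $\mathcal{C}^{i-1,\pm}_{quad}$, and the auxiliary pressure correction $-\eps^{2}\p_x P_P^{i-1,a,\pm}$. Every summand contains at least one factor that is a previously-constructed weak boundary-layer profile (or a derivative thereof), which by the inductive hypothesis enjoys arbitrary polynomial decay in $Y$ with weighted $H^{s}$ norms bounded by $\alpha_0$. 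The Euler profile factors are bounded in $C^{k+2}$ by Lemma \ref{lem2.2}, also by $\alpha_0$. Cross-scale interactions of the type $u_e^{2,-} u_p^{1,+}$ or $v_e^{2,-} v_p^{1,+}$ are benign because they retain the $Y_+$-decay coming from the boundary-layer factor; all other mixed-scale products are, by construction, absorbed into the remainder $(\mathcal{F}_u, \mathcal{F}_v)$. Combining these observations, $F^{i,\pm}$ meets the hypothesis of Lemma \ref{A.lemma3} with $M \lesssim \alpha_0$, and Lemma \ref{lem2.2} again gives $\|g\|_{H^{K+1}} \lesssim \alpha_0$; the conclusion of Lemma \ref{A.lemma3} then delivers the weighted estimates for $(u_p^{i,\pm}, v_p^{i,\pm})$.

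For the final layer $i=M$ the only modification is to apply Lemma \ref{A.lemma4} instead of Lemma \ref{A.lemma3}. Since the transverse velocity is now defined by $v_p^{M,\pm}=-\int_0^{Y}\p_x u_p^{M,\pm}\,dY'$ and need not decay at infinity, Lemma \ref{A.lemma4} only yields control of $v_p^{M,\pm}/Y$ in place of $v_p^{M,\pm}$ itself, which is exactly the weaker bound stated in the third line of the lemma. The $\pm$ cases are handled identically after swapping $Y_+$ with $Y_-$ and invoking the appendix lemmas. The main difficulty, and hence what I would devote most care to, is the bookkeeping in the inductive step: writing out $F^{i,\pm}$ explicitly and checking that each of the $\chi'$-, $\chi''$-, and nonlocal-pressure-generated terms inherits a factor that decays rapidly in the appropriate $Y$ variable, so that all weighted $L^2$ norms on the right-hand side collapse to a constant multiple of $\alpha_0$ independent of $\eps$.
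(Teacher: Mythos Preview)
Your proposal is correct and follows the same approach as the paper: the paper's proof is in fact just the single sentence ``Combining Lemma \ref{lem2.2}--Lemma \ref{A.lemma4}, Lemma \ref{A.lemma1}, Lemma \ref{A.lemma2}, we have the following Lemma,'' so your inductive argument simply spells out the bookkeeping that the paper leaves implicit. The only minor refinement is that for the ``$-$'' profiles near $y=0$ you should consistently cite Lemmas \ref{A.lemma1} and \ref{A.lemma2} (the degenerate $Y\partial_x u$ problem) rather than Lemmas \ref{A.lemma3} and \ref{A.lemma4}, as you already note in passing.
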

\bigskip
\noindent $\mathbf{Proof\  of \ Theorem \ \ref{thm2.2}}$:

First of all, the divergence free condition of $\mau_s$ and  the boundary conditions in (\ref{2.2.1}) follow directly from the process of construction of $\mau_p^{i,\pm}$.

Then recalling the definition of $Y_\pm$ and the expression of $\mau_s$ in (\ref{asy:1:1:1}), estimates (\ref{2.2.3})-(\ref{2.2.5}) follows immediately from Lemma \ref{th.asy.1}.

Next, using definition (\ref{remain1})-(\ref{remain2}), $\mathcal{F}_u, \mathcal{F}_v$ can be divided into two parts, i.e.
\begin{align*}
\va^{M_0}\mathcal{F}_u = \mathcal{C}_{Euler,u}+\mathcal{C}_{prandtl,u}, \\ \n
\va^{M_0}\mathcal{F}_v =  \mathcal{C}_{Euler, v}+\mathcal{C}_{prandtl,v},
\end{align*}
where $\mathcal{C}_{Euler,u}, \mathcal{C}_{Euler,v}$ are defined in (\ref{euler.2})-(\ref{euler.3}).
By Lemma \ref{lem2.2}, the Euler contributions to the error in \eqref{euler.2} and \eqref{euler.3} can be written as
\begin{equation}\label{asy.euler.0}
\begin{cases}
\mathcal{C}_{Euler,u}=\eps^2 (u^1_e \p_x u^1_e + v^1_e \p_y u^1_e - \Delta u^1_e) +O(\va^{\f83}), \\
\mathcal{C}_{Euler,v} = \eps^2 (u^1_e \p_x v^1_e + v^1_e \p_y v^1_e - \Delta v^1_e) +O(\va^{\f83}),
\end{cases}\end{equation}
while the weak boundary layer contributions to the error can be arbitrarily small if we expanding enough sub-layers. Here $M=10$ would be sufficient.
If we define
$$\mathcal{T}_{u,\va^2}=\mathcal{C}_{Euler,u},\ \mathcal{T}_{v,\va^2}=\mathcal{C}_{Euler,v},\ \mathcal{F}_{u,\va^3}=\mathcal{C}_{prandtl,u},\ \mathcal{F}_{v,\va^3}=\mathcal{C}_{prandtl,v},$$
then (\ref{2.2.8}) follows immediately by Lemma \ref{lem2.2} and Lemma \ref{th.asy.1} and we finish the proof of Theorem \ref{thm2.2}.

\bigskip

\renewcommand{\theequation}{\thesection.\arabic{equation}}
\setcounter{equation}{0}
\section{Existence and uniform-in-$\va$ estimates of solutions to the linearized system}

In this section, we will study the existence as well as the uniform-in-$\va$ estimates of the solutions to the  linearized system of (\ref{main.NS}).

For given $(\bar{u},\bar{v})$ and $\mathbf{F}=(F_1,F_2)$, we  consider the linear system
\begin{eqnarray}
\dv\mau=0,\label{linear1}\\
u_su_x+u_{sy}v+u_{sx}u+v_su_y-\va\Delta u+\partial_xP=N_1(\bar{u},\bar{v})+F_1 \label{linear2}\\
u_sv_x+v_sv_y+v_{sx}u+vv_{sy}-\va\Delta v+\partial_yP=N_2(\bar{u},\bar{v})+F_2,\label{linear3}
\end{eqnarray}
with boundary condition:
\begin{eqnarray}\label{boundary2}
&&\ v_x|_{x=0}=0;\ u|_{x=0}=0;\ v|_{x=L}=0;\ v_{xx}|_{x=L}=0;\nonumber\\
&&u|_{y=0}=v|_{y=0}=v_y|_{y=0}=u|_{y=2}=v|_{y=2}=v_y|_{y=2}=0,
\end{eqnarray}
where $u_s, v_s$ are known functions defined in (\ref{0.exp}).

First we differentiate (\ref{linear2}) with $y$ and (\ref{linear3}) with $x$, using the divergence free condition to obtain the following equation for $\curl \mau=u_y-v_x$:
\begin{equation}
-u_s v_{yy}+u_{syy}v-u_sv_{xx}+u_{sxx}v+S(u,v)-\va\Delta(u_y-v_x)=\curl \mathbf{N}+\curl \mathbf{F},\label{curl1}
\end{equation}
where $S(u,v)=\partial_y(u_{sx}u+v_su_y)-\partial_x(v_sv_y+v_{sx}u)$.

Recalling that $q=\f{v}{u_s},$  equation (\ref{curl1}) can be written as
 \begin{eqnarray}
-\partial_y(u_s^2q_y)-\partial_x(u_s^2q_x)+S(u,v)-\va\Delta(u_y-v_x)=\curl \mathbf{N}+\curl  \mathbf{F}.\label{curl.2}
\end{eqnarray}

\subsection{Existence of solutions to the linearized system}

In this subsection, we will prove the existence of solutions to the linear system (\ref{linear5})-(\ref{linear7}) with boundary condition (\ref{boundary2}). Due to the boundary condition $v_{xx}=0$ on $x=L$, we can not obtain the existence of the solutions  directly from the second order equations (\ref{linear5})-(\ref{linear7}). Instead, by introducing the stream function, we will first prove the existence of solutions to a fourth order equation for the stream function. Then by an elliptic equation of second order for the pressure, we can prove the  existence of solutions to the second order linear system (\ref{linear5})-(\ref{linear7}).

 For given $\mathbf{f}=(f_1,f_2)$,  we will first consider the following linear system:
\begin{eqnarray}
\dv\mau=0,\label{linear5}\\
u_su_x+u_{sy}v+u_{sx}u+v_su_y-\va\Delta u+\partial_xP=f_1,\label{linear6}\\
u_sv_x+v_sv_y+v_{sx}u+vv_{sy}-\va\Delta v+\partial_yP=f_2,\label{linear7}
\end{eqnarray}
with boundary condition (\ref{boundary2}).
The main theorem for this subsection reads as
\begin{theorem}\label{th.linear2}
 For any given $\mathbf{f}=(f_1,f_2)\in H^1(\Omega)\times H^1(\Omega)$, there exists a unique solution $(u,v,\nabla P)$ to the system (\ref{linear5})--(\ref{linear7}) satisfying the boundary condition (\ref{boundary2}) and the following estimate:
 \begin{equation}
 \|u\|_{H^3}+\|v\|_{H^3}+\|\nabla P\|_{H^1}\leq C(\va)\|\mathbf{f}\|_{H^1}.\label{3.1.0}
 \end{equation}
\end{theorem}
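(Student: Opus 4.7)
The plan is to reduce (\ref{linear5})--(\ref{linear7}) to a scalar fourth-order equation via the stream function. Set $u=\psi_y$, $v=-\psi_x$, so that (\ref{linear5}) is automatic. Translating (\ref{boundary2}) into conditions on $\psi$ (after normalizing an additive constant and using that the net flux through $\{x=0\}$ vanishes because $u|_{x=0}=0$), one finds $\psi=\psi_y=0$ on $y=0,2$; $\psi=0$ and $\psi_{xx}=0$ on $x=0$; and $\psi_x=0$ and $\psi_{xxx}=0$ on $x=L$, while $\psi$ itself is unconstrained on $x=L$. Taking the curl of (\ref{linear6})--(\ref{linear7}) eliminates $P$ and, using $\omega=u_y-v_x=\Delta\psi$, produces
\[ -\va\Delta^2\psi+\mathcal{A}\psi=\p_y f_1-\p_x f_2=:g\in L^2(\Omega),\]
where $\mathcal{A}$ is a second-order operator with coefficients built from $u_s,v_s$ and their first derivatives.

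I would next set up a variational formulation in the Hilbert space
\[ V:=\{\psi\in H^2(\Omega):\psi|_{y=0,2}=\psi|_{x=0}=0,\ \psi_y|_{y=0,2}=0,\ \psi_x|_{x=L}=0\},\]
with bilinear form $a(\psi,\phi)=\va\int_\Omega \Delta\psi\,\Delta\phi+(\text{lower order})$. The remaining boundary conditions $\psi_{xx}|_{x=0}=0$ and $\psi_{xxx}|_{x=L}=0$ emerge as natural conditions when integrating by parts against test functions $\phi\in V$. The principal part $a_0(\psi,\phi)=\va\int\Delta\psi\,\Delta\phi$ is coercive on $V$ since the essential BCs yield a Poincar\'e-type bound $\|\psi\|_{H^2}\lesssim\|\Delta\psi\|_{L^2}$, so Lax--Milgram handles the leading operator; the lower-order terms contributed by $\mathcal{A}$ form a compact perturbation, so existence reduces via the Fredholm alternative to uniqueness.

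For uniqueness with $\mathbf{f}\equiv 0$, I would test the original system with $(u,v)$ and exploit $\dv\,(u_s,v_s)=0$ together with the full boundary conditions in (\ref{boundary2}) to cancel the transport contributions, leaving only the dissipative term and linear-in-$(u,v)$ remainders that are absorbed by a Gr\"onwall/smallness argument (the assumption $L\ll 1$ is used here, with a constant that may depend on $\va$). Classical elliptic regularity for the biharmonic operator on the rectangle then upgrades $\psi\in V$ to $\psi\in H^4(\Omega)$, so that $u,v\in H^3(\Omega)$.

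Finally, the pressure is recovered directly from (\ref{linear6})--(\ref{linear7}): these equations prescribe $\p_x P$ and $\p_y P$ pointwise in terms of $(u,v,\mathbf{f})$, and the compatibility condition $\p_y(\p_x P)=\p_x(\p_y P)$ is automatic because $(u,v)$ solves the curl equation. Hence $\nabla P\in H^1(\Omega)$ with the desired bound, and assembling everything yields (\ref{3.1.0}). The main obstacle I anticipate is the non-standard mixed boundary data on $x=L$ (combining the first-order $\psi_x=0$ with the third-order $\psi_{xxx}=0$), which do not fit any of the textbook biharmonic frameworks; verifying coercivity of $a_0$ on $V$ and obtaining $H^4$ regularity up to the four corners will require careful checking of compatibility at the vertices, but since (\ref{3.1.0}) is only asserted at fixed $\va$, the $\va$-dependent constant absorbs these technicalities.
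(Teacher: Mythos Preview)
Your overall architecture matches the paper's: pass to the stream function, study the fourth-order problem $-\va\Delta^2\psi+\mathcal{A}\psi=g$ with the mixed boundary conditions (\ref{3.1.9})--(\ref{3.1.2}), obtain existence and $H^4$ regularity for $\psi$, then recover the pressure. Two points, however, are genuine gaps.

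First, $\mathcal{A}$ is \emph{third}-order in $\psi$, not second: the dominant convection contribution in (\ref{curl2}) is $-u_s\Delta v=u_s\Delta\psi_x=u_s(\psi_{xxx}+\psi_{xyy})$, and $S$ produces terms like $v_s\psi_{yyy}$. Hence $\int\mathcal{A}\psi\cdot\phi$ is not even defined as a bounded bilinear form on your space $V\subset H^2$; one integration by parts rebalances the derivatives to give $|b(\psi,\phi)|\lesssim\|\psi\|_{H^2}\|\phi\|_{H^1}$, but then you obtain only a G{\aa}rding inequality $a(\psi,\psi)\ge c\va\|\psi\|_{H^2}^2-C\|\psi\|_{L^2}^2$ rather than coercivity. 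Fredholm still reduces existence to triviality of the kernel, and that is where the second, more serious, gap lies.

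Your uniqueness sketch---test (\ref{linear6})--(\ref{linear7}) against $(u,v)$---does not close: the pressure contributes $\int_0^2 Pu\,dy\big|_{x=L}$, and only $v$ (not $u$) is prescribed at $x=L$. Passing to the vorticity equation removes the pressure, but testing (\ref{curl2}) with $v$ alone still leaves boundary contributions $-\tfrac12\va\int_0^2 v_x^2\,dy|_{x=L}$ and $-\va\int_0^2 v_{xx}v\,dy|_{x=0}$ with the wrong (or no) sign. The paper obtains the needed a~priori bound only by combining \emph{three} multipliers on (\ref{curl2}): $v$, the quotient $q_x=(v/u_s)_x$, and $\va^2 v_{xxx}$; the $q_x$ multiplier tames the degenerate convection and the $v_{xxx}$ multiplier controls the third-order viscous traces. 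This estimate (constant depending on $\va$ but uniform in the homotopy parameter) is then fed into a Leray--Schauder argument in which $\Delta^2$ is inverted against the convection terms. Your Fredholm route is perfectly legitimate in principle, but it requires the same nontrivial kernel estimate; ``test with $(u,v)$ and use $\dv(u_s,v_s)=0$'' is not enough.

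Two smaller remarks. For the $H^4$ regularity with the non-standard conditions at $x=0,L$ that you rightly flag, the paper's device is an odd extension of $\psi$ across $x=0$ (using $\psi=\psi_{xx}=0$ there) and an even extension across $x=L$ (using $\psi_x=\psi_{xxx}=0$), after which one invokes Dirichlet biharmonic regularity on a smooth auxiliary domain; this simultaneously handles the corners. For the pressure, the paper solves a Neumann problem for $\Delta P$; your direct reading of $\nabla P$ from the momentum equations is equivalent and slightly cleaner.
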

To prove Theorem \ref{th.linear2}, we first consider the following equation for $\curl \mau$:
\begin{equation}
-u_s v_{yy}+u_{syy}v-u_sv_{xx}+u_{sxx}v+S(u,v)-\va\Delta(\curl \mau)=\curl \mathbf{\mathbf{f}}.\label{curl2}
\end{equation}
Then we introduce the stream-function $\psi$ of $(u,v)$:
\begin{equation}
\psi_x=-v,\ \psi_y=u.
\end{equation}
If we assume that $\psi(0,0)=0$, then $\psi$ satisfies the following system
\begin{eqnarray}
u_s \psi_{xyy}+u_s\psi_{xxx}-\Delta u_{s}\psi_x+S(\psi_y,-\psi_x)-\va\Delta^2\psi&=&\curl \mathbf{\mathbf{f}}\ \ \  \text{in}\ \Omega;\label{3.1.1}\\
\ \psi_{xx}|_{x=0}=\psi|_{x=0}=0;\ \psi_x|_{x=L}=0;\ \psi_{xxx}|_{x=L}&=&0;\label{3.1.9}\\
\psi|_{y=0}=\psi_y|_{y=0}=\psi|_{y=2}=\psi_y|_{y=2}&=&0.\label{3.1.2}
\end{eqnarray}

To prove the existence of solutions to system (\ref{3.1.1})-(\ref{3.1.2}), we first consider the following biharmonic equation
\begin{equation}
\Delta^2\psi=f,\ \ \text{in}\ \Omega,\label{linear8}
\end{equation}
with boundary condition (\ref{3.1.9})-(\ref{3.1.2}).

We define the function space via:
$$H^m_0=\{v\in H^m(\Omega)\;|\; (\ref{3.1.9})- (\ref{3.1.2})\mbox{ are satisfied}\}.$$
 For any $\psi,\phi\in H^2_0(\Omega)$, we  define the bilinear form
\begin{equation*}
B[\psi,\phi]=\int_\Omega(\psi_{xx}\phi_{xx}+2\psi_{xy}\phi_{xy}+\psi_{yy}\phi_{yy})dxdy.
\end{equation*}
\begin{definition}
We say that $\psi\in H^2_0$ is a weak solution to (\ref{linear8}) with boundary conditions (\ref{3.1.9})-(\ref{3.1.2}) if
\begin{equation}
B[\psi,\phi]=(f,\phi) \ \ \text{for all}\ \phi\in H^2_0. \label{3.1.3}
\end{equation}
\end{definition}
\begin{lemma}\label{lem1}
For any $f\in L^2(\Omega)$, there exists a unique solution $\psi\in H^4_0(\Omega)$ to equation (\ref{linear8}) satisfying boundary condition (\ref{3.1.9})-(\ref{3.1.2}), and the following estimate:
\begin{equation}
\|\psi\|_{H^4}\leq C\|f\|_{L^2},
\end{equation}
where $C$ is a constant depending on $\Omega$.
\end{lemma}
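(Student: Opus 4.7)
The natural approach is to apply the Lax--Milgram theorem to $B[\cdot,\cdot]$ on $H^2_0(\Omega)$, treating the higher-order conditions $\psi_{xx}|_{x=0}=0$ and $\psi_{xxx}|_{x=L}=0$ as natural boundary conditions (they cannot be imposed as $H^2$ traces), while the essential conditions encoded in $H^2_0$ are $\psi=0$ on $\{y=0\}\cup\{y=2\}\cup\{x=0\}$, $\psi_y=0$ on $\{y=0\}\cup\{y=2\}$, and $\psi_x=0$ on $\{x=L\}$. Boundedness of $B$ is immediate from Cauchy--Schwarz. For coercivity I would chain Poincar\'e inequalities: $\psi=\psi_y=0$ on $y=0,2$ gives $\|\psi\|_{L^2}\lesssim\|\psi_y\|_{L^2}\lesssim\|\psi_{yy}\|_{L^2}$, and $\psi|_{y=0,2}=0$ forces $\psi_x|_{y=0,2}=0$, yielding $\|\psi_x\|_{L^2}\lesssim\|\psi_{xy}\|_{L^2}$. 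Since $B[\psi,\psi]=\|\psi_{xx}\|^2+2\|\psi_{xy}\|^2+\|\psi_{yy}\|^2$, this produces the coercivity bound $B[\psi,\psi]\gtrsim\|\psi\|_{H^2}^2$. Lax--Milgram then delivers a unique weak solution $\psi\in H^2_0$, and testing against $\phi\in C_c^\infty(\Omega)$ gives $\Delta^2\psi=f$ in the distributional sense; uniqueness follows from the same coercivity.

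The main work is the $H^4$ regularity, which I plan to obtain by extending $\psi$ across the vertical sides to reduce to a standard clamped biharmonic problem. Odd reflection across $x=0$ (setting $\tilde\psi(-x,y):=-\psi(x,y)$) is compatible with both the essential condition $\psi|_{x=0}=0$ and the natural condition $\psi_{xx}|_{x=0}=0$, while even reflection across $x=L$ (setting $\tilde\psi(2L-x,y):=\psi(x,y)$) is compatible with both $\psi_x|_{x=L}=0$ and $\psi_{xxx}|_{x=L}=0$. Extending $f$ by the same reflections, $\tilde\psi$ is an $H^2$ weak solution of $\Delta^2\tilde\psi=\tilde f$ on $\tilde\Omega=(-L,2L)\times(0,2)$ satisfying the clamped Dirichlet data $\tilde\psi=\tilde\psi_y=0$ on the horizontal sides. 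The standard interior and boundary $H^4$ regularity for the clamped biharmonic then gives $\|\tilde\psi\|_{H^4(\tilde\Omega)}\lesssim\|\tilde f\|_{L^2}$, and restricting back yields $\psi\in H^4(\Omega)$ with $\|\psi\|_{H^4}\lesssim\|f\|_{L^2}$.

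The hardest step is justifying the reflections, because the natural BCs $\psi_{xx}|_{x=0}=0$ and $\psi_{xxx}|_{x=L}=0$ are not available a priori at the $H^2$ level. My plan is a bootstrap: using tangential difference quotients in $y$ (where every condition is essential and is preserved by the quotients) and solving the equation $\Delta^2\psi=f$ for $\partial_x^4\psi$ in terms of $\partial_x^2\partial_y^2\psi$ and $\partial_y^4\psi$, I first obtain $H^3(\Omega)$ regularity. Once $\psi\in H^3(\Omega)$, the traces $\psi_{xx}|_{x=0}$ and $(\psi_{xxx}+2\psi_{xyy})|_{x=L}$ make sense in $H^{-1/2}$; integrating by parts in the weak identity $B[\psi,\phi]=(f,\phi)$ and choosing test functions $\phi\in H^2_0$ with arbitrary $\phi_x(0,\cdot)$ (respectively arbitrary $\phi(L,\cdot)$) forces both traces to vanish. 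Since $\psi_x\equiv 0$ on $x=L$ implies $\psi_{xyy}|_{x=L}=0$, the second condition reduces to the stated $\psi_{xxx}|_{x=L}=0$. With the natural BCs rigorously verified, the reflection is legitimate and standard biharmonic regularity closes the argument.
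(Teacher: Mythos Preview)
Your approach is essentially the same as the paper's: Lax--Milgram on $H^2_0$ for existence of a weak solution, followed by odd reflection across $x=0$ and even reflection across $x=L$ to reduce to the clamped biharmonic problem and invoke standard $H^4$ regularity (the paper cites Gazzola--Grunau--Sweers). Two implementation differences are worth noting. First, for the intermediate $H^3$ step the paper simply multiplies $\Delta^2\psi=f$ by $\psi_{xx}$ and integrates by parts to get $\|\psi_{xxx}\|^2+2\|\psi_{xxy}\|^2+\|\psi_{xyy}\|^2\le C\|f\|^2$ directly; this is cleaner than your proposed $y$-difference-quotient bootstrap, which is awkward because the $y$-direction is \emph{normal} at the horizontal boundaries $y=0,2$ (so those quotients do not stay in the domain without further localization). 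Second, rather than reflecting across both sides at once and appealing to regularity on the full strip $(-L,2L)\times(0,2)$---where you would still owe an explanation of the boundary conditions at the new edges $x=-L,\,2L$---the paper reflects across one side, inserts a cutoff $\eta(x)$ to localize away from the far edge, and applies clamped-biharmonic regularity on a smooth subdomain; this sidesteps the issue you left implicit.
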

\begin{proof}

First, It is easy to  prove the existence of weak solution by Lax-Milgram Theorem.
Obviously, there exist constants $C_1,C_2>0$ such that for any $\psi,\phi\in H^2_0(\Omega)$,
$$B[\psi,\phi]\leq C_1\|\psi\|_{H_2}\|\phi\|_{H_2},$$
and
$$B[\psi,\psi]\geq C_2\|\psi\|_{H_2}^2.$$
So $B$ is a coercive bilinear operator. The Lax-Milgram Theorem implies that there exists a unique weak solution $\psi\in H^2_0(\Omega)$  to  system (\ref{3.1.9})- (\ref{linear8}) satisfying (\ref{3.1.3})  and
\begin{equation}
\|\psi\|_{H^2}\leq C\|f\|_{L^2},\label{3.1.8}
\end{equation}
Moreover, since $f\in L^2(\Omega)$, we have (\ref{linear8}) holds a.e. in $\Omega$.

Next, to obtain the global $H^4$ regularity of $\psi$ in $\Omega$,   we  multiply (\ref{linear8}) with $\psi_{xx}$. Using the boundary condition (\ref{3.1.9})-(\ref{3.1.2}) and estimate (\ref{3.1.8}), we have
\begin{equation}
\int_\Omega [\psi_{xxx}^2+2\psi_{xxy}^2+\psi_{xyy}^2]dxdy=\int_{\Omega}f\psi_{xx}dxdy\leq C\|f\|^2.\label{linear3.1.1}
\end{equation}
Then we make the odd extension of $\psi$ and $f$ with respect to $x=0$,  and denote by
\begin{eqnarray*}
\psi_{odd}(x,y)=\begin{cases}\psi(x,y)~\quad\text{if}~x>0,\\
-\psi(-x,y)\quad\text{if}~x<0, \end{cases}
f_{odd}(x,y)=\begin{cases}f(x,y)~\quad\text{if}~x>0,\\
-f(-x,y)\quad\text{if}~x<0.\end{cases}
\end{eqnarray*}
By (\ref{linear3.1.1})  and the boundary condition on $x=0$, we have $\partial_x\psi_{odd}\in H^2(\bar\Omega)$ and
\begin{equation}
\int_{\bar{\Omega}} [(\partial_{xxx}\psi_{odd})^2+2(\partial_{xxy}\psi_{odd})^2+(\partial_{xyy}\psi_{odd})^2]dxdy\leq C\|f\|^2_{\Omega},\label{linear9}
\end{equation}
 here $\bar{\Omega}=[-L,L]\times[0,2]$.

Next we define a smooth cutoff function $0\leq \eta(x)\leq1$ satisfying
\begin{eqnarray*}
\eta(x)=\begin{cases}1~\quad\text{if}~-\f34L\leq x\leq\f34L,\\
0\quad\text{if}~-L\leq x\leq-\f45L,\ \text{or}\  \f45L\leq x\leq L.\end{cases}
\end{eqnarray*}
If we denote $\tilde\psi=\eta(x)\psi_{odd}$ and $\tilde\Omega\subset\bar\Omega$ is a smooth domain containing $[-\f45L,\f45L]\times[0,2]$,
then it is easy to check that $\tilde\psi$ is the weak solution to the following bihamornic system with Dirichlet boundary condition in
$\tilde\Omega$:
\begin{eqnarray*}\begin{cases}
\Delta^2\tilde\psi= \tilde f,\ \text{in}\ \tilde\Omega\\
\tilde\psi=\partial_n\tilde\psi=0,\ \text{on}\ \partial\tilde\Omega ,\end{cases}
\end{eqnarray*}
where $\tilde f=\eta'\partial_{xxx}\psi_{odd}+\eta''\partial_{xx}\psi_{odd}+\eta'''\partial_{x}\psi_{odd}
+\eta''''\psi_{odd}+\eta'\partial_{xyy}\psi_{odd}+\eta''\partial_{yy}\psi_{odd}$.

Using (\ref{3.1.8})  and (\ref{linear9}), we have $$\|\tilde f\|_{L^2}\leq C\|f\|_{L^2}.$$ Then by Theorem 2.20 in \cite{FGS},
we have $\tilde\psi\in H^4(\tilde\Omega)$ satisfying
\begin{equation*}
\|\tilde \psi\|_{H^4(\tilde{\Omega})}\leq C\|f\|_{L^2(\tilde\Omega)}.
\end{equation*}
Similarly we can  make the even extension of $\psi$ and $f$ with respect to $x=L$ and repeat the process above to obtain
\begin{equation*}
\| \psi\|_{H^4(\Omega)}\leq C\|f\|_{L^2(\Omega)}.
\end{equation*}
Thus, we finish the proof of the Lemma.
\end{proof}

Next we turn to the following system
\begin{eqnarray}
u_s \psi_{xyy}+u_s\psi_{xxx}-\Delta u_{s}\psi_x+S(\psi_y,-\psi_x)-\va\Delta^2\psi&=&f\ \ \  \text{in}\ \Omega;\label{3.1.5}\\
\ \psi_{xx}|_{x=0}=\psi|_{x=0}=0;\ \psi_x|_{x=L}=0;\ \psi_{xxx}|_{x=L}=0;\\
\psi|_{y=0}=\psi_y|_{y=0}=\psi|_{y=2}=\psi_y|_{y=2}=0.\label{3.1.6}
\end{eqnarray}
By Lemma \ref{lem1}  and Leray-Schauder fixed point theory, which can be found in \cite[Chapter 11]{G-T}, we have the following lemma:
\begin{lemma}
For any $f\in L^2(\Omega)$, there exists a unique solution $\psi\in H^4(\Omega)$ of the boundary value problem (\ref{3.1.5})-(\ref{3.1.6})
satisfying
\begin{eqnarray}
\|\psi\|_{H^4}\leq C(\va)\|f\|_{L^2}.\label{linear.4}
\end{eqnarray}
\end{lemma}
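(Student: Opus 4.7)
The plan is to apply the Leray--Schauder fixed point theorem (as in \cite[Chapter 11]{G-T}), with the biharmonic solvability from Lemma~\ref{lem1} supplying the invertible leading part and with the third-order operator
\begin{equation*}
\mathcal{L}\psi := u_s\psi_{xyy} + u_s\psi_{xxx} - \Delta u_s\,\psi_x + S(\psi_y,-\psi_x)
\end{equation*}
treated as a compact perturbation. For each $\sigma\in[0,1]$ and each $\phi$ in $H^3_{BC}$ (the subspace of $H^3(\Omega)$ verifying the boundary conditions (\ref{3.1.9})--(\ref{3.1.2})), set $T_\sigma\phi$ to be the unique $H^4$ solution, provided by Lemma~\ref{lem1}, of
\begin{equation*}
\va\Delta^2(T_\sigma\phi) = \sigma\,\mathcal{L}\phi - f
\end{equation*}
with the same boundary conditions. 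Since $\mathcal{L}\phi\in L^2$ whenever $\phi\in H^3$, the map $T_\sigma: H^3_{BC}\to H^3_{BC}$ is jointly continuous in $(\phi,\sigma)$ and compact through the Rellich embedding $H^4\hookrightarrow\hookrightarrow H^3$, and at $\sigma=0$ the unique fixed point is supplied by Lemma~\ref{lem1} applied to $-f$.

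The central step is a uniform-in-$\sigma$ a priori bound for any fixed point $\psi = T_\sigma\psi$. Testing the identity $\va\Delta^2\psi = \sigma\mathcal{L}\psi - f$ against $\psi$, the biharmonic pairing integrates cleanly to $\va\|\Delta\psi\|_{L^2}^2$ because the mixed boundary conditions force all boundary contributions to vanish: on $y=0,2$ both $\psi$ and $\psi_y$ are zero; on $x=0$, $\psi=0$ together with $\psi_{xx}|_{x=0}=0$ give $\Delta\psi|_{x=0}=0$; on $x=L$, $\psi_x=0$ forces $\psi_{xyy}=0$, which combined with $\psi_{xxx}|_{x=L}=0$ yields $\partial_x\Delta\psi|_{x=L}=0$. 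After one or two integrations by parts, the form $(\mathcal{L}\psi,\psi)$ reduces to a sum of terms controlled by $C(\|u_s\|_{C^3})\,\|\psi\|_{H^1}^2$, the boundary traces again vanishing by (\ref{3.1.9})--(\ref{3.1.2}). Combining the Poincaré inequality based on $\psi|_{x=0}=0$ (whose constant is proportional to $L$) with the elliptic estimate $\|\psi\|_{H^2}\lesssim\|\Delta\psi\|_{L^2}$ for the mixed Dirichlet--Neumann biharmonic problem, and exploiting the standing hypothesis $0<L\ll 1$, this lower-order contribution is absorbed into $\va\|\Delta\psi\|^2_{L^2}$, producing $\|\psi\|_{H^2}\le C(\va)\|f\|_{L^2}$ uniformly in $\sigma$.

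To upgrade to $H^4$, I re-read the equation as $\va\Delta^2\psi = \sigma\mathcal{L}\psi - f$ and apply Lemma~\ref{lem1}:
\begin{equation*}
\|\psi\|_{H^4}\le \tfrac{C}{\va}\bigl(\|\mathcal{L}\psi\|_{L^2}+\|f\|_{L^2}\bigr)\le C(\va)\bigl(\|\psi\|_{H^3}+\|f\|_{L^2}\bigr);
\end{equation*}
the interpolation $\|\psi\|_{H^3}\le\delta\|\psi\|_{H^4}+C_\delta\|\psi\|_{H^2}$ with $\delta$ small (depending on $\va$), combined with the previous $H^2$ bound, gives $\|\psi\|_{H^4}\le C(\va)\|f\|_{L^2}$ uniformly in $\sigma\in[0,1]$. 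Leray--Schauder then produces a fixed point of $T_1$, the desired $\psi\in H^4$; uniqueness follows by applying the same energy estimate to the difference of two solutions (which solves the homogeneous problem with $f=0$). The main obstacle is the absorption: because $\va$ is small, a bound of the form $C\|\psi\|_{H^1}^2\lesssim C\|\Delta\psi\|^2$ with a fixed constant is not absorbable, so the smallness of $L$ has to be genuinely exploited to shrink the effective Poincaré constant. Care is also needed in tracking boundary terms at $x=L$, where only the Neumann-type conditions $\psi_x=\psi_{xxx}=0$ are prescribed and do not directly pin down $\psi$ or $\psi_{xx}$.
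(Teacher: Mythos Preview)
Your Leray--Schauder setup matches the paper's, but the a priori estimate has a real gap. The claim that $(\mathcal{L}\psi,\psi)$ reduces, after integration by parts, to terms controlled by $C\|\psi\|_{H^1}^2$ with all boundary traces vanishing is false under (\ref{3.1.9})--(\ref{3.1.2}). At $x=0$ only $\psi$ and $\psi_{xx}$ vanish, not $\psi_x$; at $x=L$ only $\psi_x$ and $\psi_{xxx}$ vanish, not $\psi$ or $\psi_{xx}$. Integrating $\int u_s\psi_{xxx}\psi$ by parts once in $x$ leaves the boundary contribution $\int_0^2 u_s\psi_{xx}\psi\big|_{x=L}\,dy$, which has no sign and whose control requires the trace of $\psi_{xx}$ at $x=L$, hence an $H^3$-type norm of $\psi$; likewise $\tfrac12\int_0^2 u_s\psi_x^2\big|_{x=0}\,dy$ and $\tfrac12\int_0^2 u_s\psi_y^2\big|_{x=L}\,dy$ survive and are at best of size $\|\psi\|_{H^1}\|\psi\|_{H^2}$, not $\|\psi\|_{H^1}^2$. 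Even if you weaken the claim to $\lesssim \|\psi\|_{H^2}^2$, your absorption still fails: the Poincar\'e gain in $L$ applies to $\|\psi\|$ and $\|\nabla\psi\|$, but there is no $L$-smallness in passing from $\|\psi\|_{H^2}$ to $\|\Delta\psi\|$, so an $O(1)$ coefficient in front of $\|\psi\|_{H^2}^2$ cannot be absorbed into $\va\|\Delta\psi\|^2$ once $\va$ is small and $L$ is fixed independently of $\va$.

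The paper obtains the uniform-in-$t$ bound by a genuinely different energy method. With $u=\psi_y$, $v=-\psi_x$, $q=v/u_s$, the fixed-point equation is rewritten as
\[
-t\,\partial_y(u_s^2 q_y)-t\,\partial_x(u_s^2 q_x)+tS-\va\Delta(u_y-v_x)=tf,
\]
and tested successively against $tv$, against $q_x$, and against $\va^2 v_{xxx}$. The divergence form of the convection produces sign-definite contributions (namely $t^2\|\sqrt{u_s}\nabla v\|^2$ from the first test and the good boundary terms $-\tfrac12 t|u_s q_y(0,\cdot)|^2$, $-\tfrac12 t|u_s q_x(L,\cdot)|^2$ from the second), and the three estimates close by a bootstrap exploiting the smallness of $L$ and of $\va$. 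The $H^4$ bound is then recovered from Lemma~\ref{lem1}, with the only residual third-order term $v_s u_{yy}$ absorbed using $|v_s|_{L^\infty}\le \va\alpha_0$ and $\alpha_0$ small. The choice of multipliers adapted to the $q$-variable, not the test against $\psi$, is what makes the boundary bookkeeping work.
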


\begin{proof}
First, taking $\mathcal{B}=H^3_0(\Omega)$, for any $\phi\in \mathcal{B},\ t\in[0,1]$, we define an operator $\mathrm{T}:\mathcal{B}\times[0,1]\rightarrow \mathcal{B}$ by the solution of the following system:
\begin{eqnarray}
-\va\Delta^2\psi=t\{f-[u_s \phi_{xyy}-\Delta u_{s}\phi_x+u_s\phi_{xxx}+S(\phi_y,-\phi_x)]\},\label{linear.2}\\
\ \psi_{xx}|_{x=0}=\psi|_{x=0}=0;\ \psi_x|_{x=L}=0;\ \psi_{xxx}|_{x=L}=0;\\
\psi|_{y=0}=\psi_y|_{y=0}=\psi|_{y=2}=\psi_y|_{y=2}=0.
\end{eqnarray}
Then by Lemma \ref{lem1}, $T$ is a compact operator and $T(\phi,0)=0$ for all $\phi\in \mathcal{B}$.

Next, remember that $\psi_x=-v, \ \psi_y=u,\ q=\f{v}{u_s}$,  if we replace $\phi$ by $\psi$,  then  (\ref{linear.2}) can be written as
\begin{equation}
-t[\partial_y(u_s^2q_y)-\partial_x(u_s^2q_x)+S(\psi_y,-\psi_x)]-\va\Delta(u_y-v_x)=tf.\label{linear.3}
\end{equation}
Multiplying (\ref{linear.3}) with $tv$ to obtain
\begin{eqnarray}
&&-\int_\Omega t^2v[\partial_y(u_s^2q_y)-\partial_x(u_s^2q_x)+S(\psi_y,-\psi_x)]dxdy-\va\int_\Omega\Delta(u_y-v_x)tvdxdy\nonumber\\
=&&\int_\Omega tvfdxdy.\label{linear3.1.3}
\end{eqnarray}
By similar  process in Lemma \ref{lem.2}, we have
\begin{eqnarray}
&&\int_\Omega t^2[ -u_s v_{yy}+u_{syy}v-u_sv_{xx}+u_{sxx}v]vdxdy\nonumber\\
=&&t^2\int_\Omega u_s(v_y^2+v_x^2)dxdy+t^2\int_\Omega(\f12 u_{syy}v^2+u_{sxx}v^2+ u_{sx}vv_x)dxdy\nonumber\\
\gtrsim&&t^2\|\sqrt{u_s} v_{y}\|^2+t^2\|\sqrt{u_s} v_{x}\|^2,
\end{eqnarray}
and
\begin{eqnarray}
&&t^2|\int_\Omega S(u,v)vdxdy|=t^2|\int_\Omega [-\partial_y(u_{sx}u+v_su_y)+\partial_x(v_sv_y+v_{sx}u)]vdxdy|\nonumber\\
=&&t^2|\int_\Omega (u_{sx}u+v_su_y)v_ydxdy+\int_\Omega (v_sv_{xy}+v_{sxx}u)vdxdy|\nonumber\\
\lesssim &&t^2\{L\va^{\f12}\|\sqrt{u_s}v_y\|^2+L\va\|\sqrt{u_s}v_y\|\|v_{yy}\|+\|v\|_{L^2}(\va\|v_{xy}\|+L\va\|\sqrt{u_s}v_y\|)\}\nonumber\\
\lesssim &&t^2L^{\f14}(\|\sqrt{u_s} v_{y}\|^2+\|\sqrt{u_s} v_{x}\|^2)+\va^2\|\sqrt{u_s}\nabla^2q\|^2,
\end{eqnarray}
while
\begin{eqnarray}
&&-\va\int_\Omega\Delta (u_y-v_x)tvdxdy\nonumber\\
=&&\f12t\va\int_0^2 u_y^2|_{x=L}+t\va\int_0^2 v_y^2|_{x=0}
-t\va\int_0^2 v_{xx}v|_{x=0}-\f12t\va\int_0^2 v_x^2|_{x=L}\nonumber\\
\gtrsim&&t\va|u_y(L,\cdot)|^2+t\va|v_y(0,\cdot)|^2-t\va|u_s^{-\f14}v_{xx}(0,\cdot)||u_s^{\f14}v(0,\cdot)|-t\va\|\sqrt{u_s} v_{x}\|\|\sqrt{u_s}\nabla^2q\|,\nonumber
\end{eqnarray}
where
\begin{eqnarray}
&&t^2\va^2|u_s^{\f14}v(0,\cdot)|^2|u_s^{-\f14}v_{xx}(0,\cdot)|^2\nonumber\\
=&&t^2\va^2\{\int_\Omega(2u_s^{\f12}vv_x+\f12u_s^{-\f12}u_{sx}v^2)dxdy\}\{\int_\Omega[2u_s^{-\f12}v_{xx}v_{xxx}
-\f12u_s^{-\f32}u_{sx}v_{xx}^2]dxdy\} \nonumber\\
\leq &&t^2\va^2(\|\sqrt{u_s}v_{x}\|\|v\|+\va^{\f14}\|v\|^2)[(\|\sqrt{u_s}q_{xx}\|
+\|q_{x}\|)\|v_{xxx}\|+\va^{\f12}(\|\sqrt{u_s}q_{xx}\|+\|q_{x}\|)^2]\nonumber\\
\lesssim&&L^{\f12}t^2\|\sqrt{u_s} \nabla v\|^2(\va\|\sqrt{u_s}\nabla^2q\|+\va^3\|v_{xxx}\|^2).\label{linear3.1.4}
\end{eqnarray}
Combing (\ref{linear3.1.3})-(\ref{linear3.1.4}) and (\ref{linear.2.1}), we have
\begin{eqnarray}
t^2\|\sqrt{u_s} v_{y}\|^2+t^2\|\sqrt{u_s} v_{x}\|^2\leq C[L^{\f14}(\va\|\sqrt{u_s}\nabla^2q\|+\va^3\|v_{xxx}\|^2)+\|f\|^2],\label{linear3.1.8}
\end{eqnarray}
where the constant $C$ does not depend on $t,\;\va$.

Then, we multiply (\ref{linear.3}) with $q_x$ to get
\begin{eqnarray}
&&\int_\Omega[ -t\partial_y(u_s^2\partial_yq)-t\partial_x(u_s^2q_x)+tS(u,v)-\va\Delta(u_y-v_x)]q_xdxdy=\int_\Omega fq_xdxdy.\nonumber\\\label{linear3.1.5}
\end{eqnarray}
Similar to the process in Lemma \ref{lem3.1}, we obtain
\begin{eqnarray}
&&\int_\Omega tq_x[-\partial_y(u_s^2\partial_yq)-\partial_x(u_s^2q_x)]dxdy\nonumber\\
=&&-\f12t\int_\Omega u_s^2q_y^2|_{x=0}dy-\f12t\int_\Omega u_s^2q_x^2|_{x=L}dy+\f12t\int_\Omega u_{sx}^2q^2|_{x=0}dy\nonumber\\
&&-t\int_\Omega u_su_{sx}(q_x^2+q_y^2)dxdy\nonumber\\
\lesssim&&-t(|u_sq_y(0,\cdot)|^2+|u_sq_x(L,\cdot)|^2)+Lt\va\int_\Omega (u_sq_{xx}^2+u_sq_{xy}^2)dxdy,
\end{eqnarray}
and
\begin{eqnarray}
&&t|\int_\Omega S(u,v)q_xdxdy|\nonumber\\
=&&t|\int_\Omega [-\partial_y(u_{sx}u+v_su_y)+\partial_x(v_sv_y+v_{sx}u)]q_xdxdy|\nonumber\\
=&&t|\int_\Omega[(u_{sx}u+v_su_y)q_{xy}+(v_sv_{xy}+v_{sxx}u)q_x]dxdy|\nonumber\\
\leq&&tL^{\f12}\|\sqrt{u_s}q_{xy}\|(\va^{\f12}\|\sqrt{u_s}v_y\|+\va\|\sqrt{u_s}\nabla^2q\|),
\end{eqnarray}
while by virtue of (\ref{linear.2.11}),
\begin{eqnarray}
&&-\va\int_\Omega\Delta(u_y-v_x)q_xdxdy
\lesssim -\va\|\sqrt{u_s}\nabla^2q\|^2+\va^{\f14+3}\|v_{xxx}\|^2.\label{linear3.1.6}
\end{eqnarray}
Putting (\ref{linear3.1.5})-(\ref{linear3.1.6}) together, we conclude
\begin{eqnarray}
\va\|\sqrt{u_s}\nabla^2q\|^2\leq C(\va)\|f\|^2+C_0(L^{\f14}t^2\|\sqrt{u_s}\nabla v\|^2+\va^{\f14+3}\|v_{xxx}\|^2), \label{linear3.1.7}
\end{eqnarray}
where the constant $C(\va)$ does not depend on $t$, while $C_0$ does not depend on $t,\va$.

Next, we multiply (\ref{linear.3}) with $\va^2v_{xxx}$ to find that
\begin{eqnarray}
&&-\va^3\int_\Omega v_{xxx}\Delta(u_y-v_x)dxdy=\va^3\int_\Omega(v_{xxx}^2+v_{xxy}^2+v_{xyy}^2)dxdy\nonumber\\
\leq&&\va^2\int_\Omega t|f-u_s v_{yy}+u_{syy}v-u_sv_{xx}+u_{sxx}v+S(u,v)||v_{xxx}|dxdy\nonumber\\
\leq&&\va^2(\|v_{xxx}\|+\|v_{xxy}\|)(\|f\|+\|\sqrt{u_s}\nabla^2q\|),  \nonumber
\end{eqnarray}
which implies
\begin{equation}
\va^3\|\nabla^2v_x\|^2 \leq C(\|f\|^2+\va\|\sqrt{u_s}\nabla^2q\|^2), \label{linear3.1.2}
\end{equation}
where the constant $C$ does not depend on $\va,t$.

Combing (\ref{linear3.1.8}) and (\ref{linear3.1.7}) with (\ref{linear3.1.2}), we obtain
$$\va\|\sqrt{u_s}\nabla^2q\|^2+\va^3\|\nabla^2v_x\|^2 \leq C(\va)\|f\|^2\quad\mbox{for any }0\leq t\leq 1,$$
where the constant $C(\va)$ does not depend on $t$.
 Recalling that $u=\psi_y, v=-\psi_x$, and the divergence free condition of $\mau$, we have
 $$\|t\{f-[u_s \psi_{xyy}-u_{syy}\psi_x+u_s\psi_{xxx}+S(\psi_y,-\psi_x)]\}\|\leq C(\va)\|f\|_{L^2}+\|v_su_{yy}\|.$$

Using Lemma \ref{lem1} again and recalling that $|v_s|_{L^\infty}\leq \va\alpha_0$, we have
$$\va\|\psi\|_{H^4}\lesssim C(\va)\|f\|_{L^2}+\|v_su_{yy}\|\lesssim C(\va)\|f\|_{L^2}+\va\alpha_0\|\psi_{yyy}\|.$$
Taking $\alpha_0$ small enough, then
\begin{equation}\|\psi\|_{H^4}\lesssim C(\va)\|f\|_{L^2}, \label{3.1.10}\end{equation}
where the constant $C(\va)$ does not depend on $t$.

 The Leray-Schauder fixed point theorem implies that there exists a unique $\psi\in\mathcal{B}$ with estimate (\ref{3.1.10}),
 such that $\psi=T(\psi,1)$. Then the proof is completed.
\end{proof}

\noindent $\mathbf{Proof\ of\ Theorem\ \ref{th.linear2}:}$

To prove Theorem \ref{th.linear2}, we also have to show the existence of the pressure $P$. We first differentiate (\ref{linear6})
with respect to $x$ and differentiate (\ref{linear7}) with respect to $y$ to have
\begin{eqnarray}
\Delta P=\partial_xf_1+\partial_yf_2-(2u_{sy}v_x+4v_{sy}v_y+2v_{sx}u_y).\label{div.1}
\end{eqnarray}
Moreover, we assume the following boundary condition for $P$ from equations (\ref{linear6}) and (\ref{linear7}):
\begin{eqnarray}
P_x=f_1-[u_su_x+u_{sy}v+u_{sx}u+v_su_y-\va\Delta u]\ \text{ on}\ \{x=0\}\cup\{x=L\},\label{boundary.3.1}\\
P_y=f_2-[u_sv_x+v_sv_y+v_{sx}u+vv_{sy}-\va\Delta v]\ \text{ on}\ \{y=0\}\cup\{y=2\}.\label{boundary.3.2}
\end{eqnarray}
Then, by the theory for elliptic equations, there exists a unique solution $P$ up to a constant to the system (\ref{div.1})-(\ref{boundary.3.2}) satisfying the estimate
\begin{equation}
\|\nabla P\|_{H^1}\leq C(\va)(\|\mathbf{f}\|_{H^1}+\|u\|_{H^3}+\|v\|_{H^3}) \leq C(\va)\|\mathbf{f}\|_{H^1}.
\end{equation}
Therefore, we have obtained a triple $(u,v,\nabla P)$ satisfying (\ref{3.1.0}). Besides, the equations (\ref{linear6})-(\ref{linear7}) follow immediately
from (\ref{curl2}), (\ref{div.1}) and the boundary conditions (\ref{boundary.3.1})-(\ref{boundary.3.2}).

\subsection{Uniform-in-$\va$ estimates}

In this subsection, we will work on the uniform-in-$\va$ estimates of the linear system (\ref{linear1})-(\ref{linear3}).
The main result of this subsection reads as follows.
\begin{theorem}\label{th.linear1}
For given $\bar{\mathbf{u}}=(\bar{u},\bar{v})\in \mathcal{X}$, $(F_1,F_2)\in H^1(\Omega)\times H^1(\Omega)$,
if $\mau=(u,v)$ is the solution to  system (\ref{linear1})- (\ref{boundary2}), then we have the following estimate:
\begin{equation}
\|\mau\|_{\mathcal{X}}^2\leq C\{\va^\gamma\|\bar{\mau}\|_{\mathcal{X}}^4+\|  \mathbf{F}\|_{H_1}^2+\va^3\| \mathbf{F}\|_{H_2}^2
+(\curl{ \mathbf{F}},q_x)\},
\end{equation}
where the constant $C$ does not depend on $\va,\ L$.
\end{theorem}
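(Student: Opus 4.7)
The plan is to derive four coupled energy identities based on testing the vorticity equation \eqref{curl.2} against carefully chosen multipliers, and to close the estimates by a bootstrap that absorbs all cross terms into the $\mathcal{X}$-norm. Throughout, the nonlinear contribution $\curl\mathbf{N}(\bar u,\bar v)$ is placed on the right-hand side and bounded at the end by the $\mathcal{X}$-norm of $\bar{\mau}$ using Sobolev embeddings and the small weight $\va^{M_0}$ which is attached to the nonlinearity by the ansatz \eqref{0.exp}.

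First I would multiply \eqref{curl.2} by $v$ and integrate. The principal elliptic piece $-\partial_y(u_s^2q_y)-\partial_x(u_s^2q_x)$, after using $v=u_sq$ and the Dirichlet conditions on $v$, produces the positive term $\int u_s(v_x^2+v_y^2)$ plus commutator pieces governed by $u_{sx},u_{syy},u_{sxx}$, all of order $\va^{1/2}$ near the walls by Theorem \ref{thm2.2}. The viscous term $-\va\int\Delta(u_y-v_x)v$ yields the boundary contributions $\va|u_y(L,\cdot)|^2+\va|v_y(0,\cdot)|^2$ with controllable remainders, while $S(u,v)$ is absorbed into $L^{1/4}\|\sqrt{u_s}\nabla v\|^2+\va^2\|\sqrt{u_s}\nabla^2q\|^2$ just as in the existence computation. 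This gives a preliminary bound on $\|\sqrt{u_s}\nabla v\|^2$ in terms of $\va\|\sqrt{u_s}\nabla^2q\|^2$, $\va^3\|\nabla^2v_x\|^2$, the data, and the nonlinear source. Next, testing with the twisted derivative $q_x=(v/u_s)_x$ produces both the boundary trace $|u_sq_y(0,\cdot)|^2$ and the bulk term $\va\int u_s|\nabla q_x|^2$; the viscous part $-\va\int\Delta(u_y-v_x)q_x$, after integration by parts, delivers $\va\|\sqrt{u_s}\nabla^2q\|^2$ up to a $\va^{3+1/4}\|v_{xxx}\|^2$ tail coming from the trace of $v_{xx}|_{x=0}$, in analogy with \eqref{linear3.1.4}. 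Combined with the first identity, this controls $\|\sqrt{u_s}\nabla v\|^2+\va\|\sqrt{u_s}\nabla^2q\|^2+|u_sq_y(0,\cdot)|^2$ in terms of $\va^3\|\nabla^2v_x\|^2$ and the data.

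To close the third-order estimate I would then test \eqref{curl.2} against $\va^2v_{xxx}$: because $v$ satisfies $v|_{y=0,2}=v|_{x=L}=v_{xx}|_{x=L}=v_x|_{x=0}=0$, the integration by parts produces only harmless boundary traces, and the viscous term yields $\va^3(\|v_{xxx}\|^2+\|v_{xxy}\|^2+\|v_{xyy}\|^2)$ on the left, while the convective coefficients are controlled by the previously bounded quantities $\|\sqrt{u_s}\nabla v\|$ and $\sqrt{\va}\|\sqrt{u_s}\nabla^2q\|$. This recovers $\va^3\|\nabla^2v_x\|^2$. To obtain the last piece $\va^3\|v_{yyy}\|^2$ of $\va^3\|\nabla^3\mau\|^2$ I would view the curl equation as the biharmonic equation $-\va\Delta^2\psi=\curl\mathbf{F}+\curl\mathbf{N}-(\text{lower order terms})$ for the stream function $\psi$, whose Dirichlet data \eqref{3.1.9}--\eqref{3.1.2} fit exactly the framework of Lemma \ref{lem1}; using the already-controlled $\nabla^2v_x$ plus the right-hand side to bound the forcing in $L^2$, the Dirichlet biharmonic regularity supplies the missing $\|v_{yyy}\|$. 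The divergence-free condition then recovers $u_{xyy},u_{xxy},u_{xxx}$, and $u_{yyy}$ is extracted by differentiating the horizontal momentum equation \eqref{linear2} once in $y$ and solving for $\va u_{yyy}$ algebraically.

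Finally, the nonlinear contributions $\curl\mathbf{N}=-\va^{M_0}\curl(\bar{\mau}\cdot\nabla\bar{\mau})$ paired with the multipliers $v$, $q_x$, and $\va^2v_{xxx}$ are estimated by Hölder and Sobolev embeddings in terms of $\|\bar{\mau}\|_{\mathcal X}$; since $q=v/u_s$ carries a $u_s^{-1}\sim y^{-1}$ weight near $y=0$ that costs $\va^{-1/2}$ through the boundary-layer scale, together with $M_0=\tfrac{11}{8}+\gamma$ this produces the factor $\va^\gamma\|\bar{\mau}\|_{\mathcal X}^4$ after Young's inequality. The main obstacle is the $q_x$ step: one must carefully cancel the wall trace $\va|u_s^{-1/4}v_{xx}(0,\cdot)|\,|u_s^{1/4}v(0,\cdot)|$ against $|u_sq_y(0,\cdot)|$ using the degeneracy of $u_s$ at $y=0$ while still retaining positivity, and simultaneously handle the fact that $u_{sy}$ has no favorable sign at $y=2$, which is precisely what forces the detour through biharmonic regularity instead of a direct third-order energy identity.
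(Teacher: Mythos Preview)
Your first three steps---testing the vorticity equation against $v$, then $q_x$, then $\va^2 v_{xxx}$---coincide exactly with the paper's Lemmas~\ref{lem.2}, \ref{lem3.1}, and~\ref{lem3.2}. The difference lies in how you recover $\va^{3/2}\|v_{yyy}\|$. The paper differentiates the vorticity equation once in $x$ to obtain a biharmonic equation $\va\Delta^2 v=G$ for $v$ itself; since $v$ does \emph{not} satisfy the boundary conditions of Lemma~\ref{lem1} (one inherits a nonzero $v_{xxx}|_{x=0}$ from compatibility with \eqref{curl1}), the paper must first homogenize that trace by constructing an auxiliary corrector $v_0$ via two nested elliptic problems, then perform even/odd extensions, invoke Dirichlet biharmonic regularity for $\va^{5/2}\|v\|_{H^4}$, and finally interpolate to reach $\va^{3/2}\|v\|_{H^3}$. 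Your route---applying Lemma~\ref{lem1} directly to the stream function $\psi$, which already satisfies \eqref{3.1.9}--\eqref{3.1.2}---bypasses all of that machinery and yields $\va\|\psi\|_{H^4}\lesssim\|\sqrt{u_s}\nabla^2 q\|+\|\curl\mathbf{F}\|+\|\curl\mathbf{N}\|+\alpha_0\va\|\psi\|_{H^4}$ in one step; after absorption this gives $\va^{3/2}\|\nabla^3\mau\|\lesssim A_2+\va^{1/2}\|\mathbf{F}\|_{H^1}+\cdots$, which is in fact slightly stronger than the paper (it does not need the $\va^3\|\mathbf{F}\|_{H^2}^2$ term).

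Two small corrections to your write-up. First, the forcing in your biharmonic step is controlled by $\|u_s\Delta v\|\lesssim\|\sqrt{u_s}\nabla^2 q\|$ (i.e.\ by $A_2$), not by the ``already-controlled $\nabla^2 v_x$'' you mention; this does not hurt, since $A_2$ is already in hand. Second, your final extraction of $u_{yyy}$ by differentiating \eqref{linear2} in $y$ would introduce the pressure term $P_{xy}$, which you have no independent control over; the paper instead reads $u_{yyy}$ off the vorticity equation \eqref{curl1}. But in your scheme this step is actually superfluous: once you have $\|\psi\|_{H^4}$, the identity $u_{yyy}=\psi_{yyyy}$ delivers it for free. (Also, the trace $\va|u_s^{-1/4}v_{xx}(0,\cdot)|\,|u_s^{1/4}v(0,\cdot)|$ you flag as the obstacle arises in the $v$-multiplier step, not the $q_x$ step, and is handled by a product estimate rather than a cancellation against $|u_sq_y(0,\cdot)|$.)
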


\subsubsection{Preparation}

Before proving Theorem \ref{th.linear1}, we first introduce the following notations:
\begin{eqnarray}
A_1^2&=&\|\sqrt{u_s} v_{y}\|^2+\|\sqrt{u_s} v_{x}\|^2,\label{A1}\\
A_2^2&=&\va (\|\sqrt{u_s}q_{xx}\|^2+\|\sqrt{u_s}q_{xy}\|^2+\|\sqrt{u_s}q_{yy}\|^2)+| u_sq_y(0,\cdot)|^2+ | u_sq_x(L,\cdot)|^2,\nonumber\\\label{A2}
A_3^2&=&\va^3 (\|v_{xxx}\|^2+\|v_{xxy}\|^2+\|v_{xyy}\|^2).\label{A3}
\end{eqnarray}

\begin{lemma}\label{lem.linear.1}
Let $\mau=(u,v)$ be the solution to  system (\ref{linear1})-(\ref{boundary2}), then we have the following estimates:
\begin{eqnarray}
&&\|v\|_{L^2}\lesssim L^{\f14}(\|\sqrt{u_s} v_{y}\|+\|\sqrt{u_s} v_{x}\|),\label{linear.2.1}\\
&&\va^{\f38}\|q_x\|\lesssim A_1 + A_2,   \label{linear3.2.2}\\
&&\va^{\f14}\|v\|_{H^1}+\va\|u_{yy}\|_{L^2}+\va^{\f12+\tau}\|u\|_{L^\infty}+\va^{\f14+\tau}\|v\|_{L^\infty}
+\va^{\f12+\tau}\|v_y\|_{L^\infty}\lesssim \|\mau\|_{\mathcal{X}},  \nonumber\\\label{linear.2.3}
\end{eqnarray}
where $\tau$ is any constant satisfying $0<\tau\ll\gamma$.
\end{lemma}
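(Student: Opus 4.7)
The plan is to establish the three estimates in turn, exploiting the boundary conditions on $v$ together with the identity $v=u_sq$ and weighted Hardy--Poincar\'e inequalities that take advantage of $u_s\gtrsim y$ near $y=0$ and $u_s\gtrsim 1$ on the rest of $\Omega$. Every step will have to contend with the degeneracy of $u_s$ at the wall $y=0$.

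For the first bound I start from the identity $\int_0^2 v^2\,dy=-2\int_0^2 y\,v\,v_y\,dy$, obtained by integrating $\partial_y(yv^2)=v^2+2yvv_y$ and using $v(x,0)=v(x,2)=0$. Since $y\lesssim u_s$ pointwise on $\Omega$, Cauchy--Schwarz gives $\|v\|_{L^2}^2\lesssim \|\sqrt{u_s}v\|\cdot\|\sqrt{u_s}v_y\|$. I then derive two separate bounds on $\|\sqrt{u_s}v\|$: a Poincar\'e bound $\|\sqrt{u_s}v\|\lesssim L\|\sqrt{u_s}v_x\|$ from $v(L,y)=0$, and a weighted Hardy bound $\|\sqrt{u_s}v\|\lesssim \|\sqrt{u_s}v_y\|$ from $v(x,0)=0$ combined with $u_s\sim y$ near zero. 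Taking their geometric mean yields $\|\sqrt{u_s}v\|\lesssim L^{1/2}\|\sqrt{u_s}v_x\|^{1/2}\|\sqrt{u_s}v_y\|^{1/2}$, and substituting back produces $\|v\|_{L^2}^2\lesssim L^{1/2}A_1^2$, i.e. $\|v\|_{L^2}\lesssim L^{1/4}A_1$.

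For the second bound I split $\Omega$ at the level $y=\va^{1/4}$. On the outer set $\{y\ge\va^{1/4}\}$ the weight $u_s$ is bounded below by $c\va^{1/4}$, so the ODE identity $q_x(x,y)=q_x(L,y)-\int_x^L q_{xx}(s,y)\,ds$ combined with Cauchy--Schwarz in $x$ and the two $A_2$-controls $|u_sq_x(L,\cdot)|\le A_2$ and $\va^{1/2}\|\sqrt{u_s}q_{xx}\|\le A_2$ produces $\|q_x\|_{L^2}^2\lesssim (L\va^{-1/2}+L^2\va^{-5/4})A_2^2$ on this set. On the inner strip $\{y\le\va^{1/4}\}$ the boundary conditions $v(x,0)=v_y(x,0)=0$ together with $v=u_sq$ and the vanishing $u_s(x,0)=u_{sx}(x,0)=0$ force $q_x(x,0)=0$; Hardy's inequality in $y$ followed by $u_s\sim y$ then gives $\int_0^{\va^{1/4}}q_x^2\,dy\lesssim \va^{1/4}\int_0^{\va^{1/4}}u_sq_{xy}^2\,dy$, and integrating in $x$ yields $\|q_x\|_{L^2}^2\lesssim \va^{-3/4}A_2^2$ on the strip. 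Adding the two contributions gives $\va^{3/8}\|q_x\|\lesssim A_1+A_2$; the cut-off exponent $1/4$ is chosen precisely to balance the $\va$-losses in the two regions so that the overall exponent is $\va^{-3/8}$, and the $A_1$ contribution arises from controlling the lower-order piece $vu_{sx}/u_s^2$ of $q_x$ via the first estimate.

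For the third bound each quantity is obtained by interpolation between an $L^2$ control---from estimate~(1), from the weighted $\sqrt{u_s}$-norms inside $\|\mathbf{u}\|_{\mathcal X}$ via the same cut-off device, or from the boundary trace $|u_sq_y(0,\cdot)|$---and a higher-derivative norm from the $\mathcal X$-norm. For $\va^{1/4}\|v\|_{H^1}$ I split $\Omega$ as in estimate~(2) and use Hardy in the near-wall strip where $v(x,0)=v_y(x,0)=0$ to convert $\|\sqrt{u_s}\nabla v\|$ into $\|\nabla v\|$ at the cost of $\va^{-1/4}$. For $\va\|u_{yy}\|_{L^2}$ I integrate $u_{yy}^2$ by parts against $u_y$ using $u(x,0)=u(x,2)=0$, so the interior term is bounded by $\|u_y\|\cdot\|u_{yyy}\|$ via $\va^{3/2}\|\nabla^3\mathbf{u}\|\le\|\mathbf{u}\|_{\mathcal X}$, while the boundary trace $u_{yy}(x,0)$ is read off from the linearized momentum equation on $y=0$, where $u=v=u_s=u_{sx}=v_s=0$ forces $\va u_{yy}|_{y=0}=P_x|_{y=0}-F_1|_{y=0}$. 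Finally the three $L^\infty$ bounds come from the 2D Sobolev embedding $H^{1+\tau}(\Omega)\hookrightarrow L^\infty(\Omega)$ together with the Gagliardo--Nirenberg interpolation $\|f\|_{H^{1+\tau}}\lesssim\|f\|_{H^1}^{1-\tau}\|f\|_{H^2}^{\tau}$; plugging in $H^1$-bounds of order $\va^{-1/4}\|\mathbf{u}\|_{\mathcal X}$ and $H^2$-bounds of order $\va^{-1/2}\|\mathbf{u}\|_{\mathcal X}$ reproduces the stated $\va$-powers. The main obstacle throughout is the degeneracy of $u_s$ at $y=0$: every use of the weighted norms in $\mathcal X$ must be paired either with a cut-off at a carefully chosen power of $\va$ or with a Hardy inequality exploiting the higher-order vanishing of $v$ at the wall, and this balancing is what fixes the exponents $L^{1/4}$, $\va^{3/8}$, $\va^{1/4}$, $\va^{1/2+\tau}$ that appear in the statement.
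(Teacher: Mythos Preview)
Your argument for the first estimate via the geometric mean of a Poincar\'e bound in $x$ and a Hardy-type bound in $y$ is a clean alternative to the paper's cutoff device (the paper splits at $y=\delta=L^{1/2}$ and uses the same $\int v^2=\int\partial_y(y)\,v^2$ trick only on the near-wall piece). One caveat: the pointwise bound $y\lesssim u_s$ fails near $y=2$ when $\alpha_1=0$, so that case needs a symmetric argument at the upper wall, as the paper notes.

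The genuine gap is in the outer region for the second estimate. Your ODE-in-$x$ identity together with $u_s\gtrsim\va^{1/4}$ gives, as you compute, a contribution $L^2\va^{-5/4}A_2^2$ to $\|q_x\|^2$ on $\{y\geq\va^{1/4}\}$. After the final factor $\va^{3/4}$ this becomes $L^2\va^{-1/2}A_2^2$, which blows up as $\va\to 0$ since $L$ is a fixed constant independent of $\va$. The paper's route here is different and essential: on $\{y\geq\va^{1/4}\}$ one writes $q_x=v_x/u_s-v\,u_{sx}/u_s^2$ directly, and since $u_s^3\gtrsim\va^{3/4}$ on this set,
\[
\va^{3/4}\!\int_{\{y\geq\va^{1/4}\}}\frac{v_x^2}{u_s^2}\,dx\,dy
\;\lesssim\;\int_\Omega u_s v_x^2\,dx\,dy\;\leq\;A_1^2,
\]
with the $v\,u_{sx}/u_s^2$ piece lower order. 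So the $A_1$ in the statement is not a minor correction coming from $v\,u_{sx}/u_s^2$: it carries the entire outer-region contribution, and the $A_2$ control (through $\|\sqrt{u_s}q_{xy}\|$) is used only in the near-wall strip via the same $\int\partial_y(y)\cdot$ device as in the first estimate.

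A smaller point on the third estimate: reading off $\va u_{yy}|_{y=0}=P_x-F_1$ from the momentum equation brings $P$ and $F$ into the bound, but the claim is a pure inequality in terms of $\|\mathbf u\|_{\mathcal X}$, so that route cannot close. One should instead interpolate $\|u_{yy}\|$ between $\va^{3/2}\|\nabla^3\mathbf u\|$ and a lower norm already controlled.
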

\begin{remark}\label{re.1}
In fact, (\ref{linear.2.1}) holds for any $v\in H^1(\Omega)$ satisfying $v|_{x=0}=0$, or $v|_{x=L}=0$. For example, we also have
$$\|\nabla q\|\lesssim L^{\f14}\|\sqrt{u_s}\nabla^2q\|.$$
\end{remark}
\begin{proof}
First of all, if $\alpha_1>0$, then $u_s>0$ on $y=2$. Let $\chi$ be the cutoff function defined in (\ref{cutoff}), $\delta>0$ is a small constant to be determined,
 direct computation shows that
\begin{eqnarray}
&&\int_\Omega v^2dxdy=\int_\Omega \chi^2(\f y\delta)v^2dxdy+\int_\Omega [1-\chi^2(\f y\delta)]v^2dxdy\nonumber\\
=&&\int_\Omega \partial_y(y) \chi^2(\f y\delta)v^2dxdy+\int_\Omega [1-\chi^2(\f y\delta)](\int_L^xv_xds)^2dxdy\nonumber\\
=&&-\int_\Omega y[2\chi^2 vv_y +\f2\delta\chi\chi'v^2]dxdy+\int_\Omega [1-\chi^2(\f y\delta)](\int_L^xv_xds)^2dxdy\nonumber\\
\leq&&8\delta\|y^{\f12}\chi v_y \|^2+\f12\|\chi v \|^2+\f {L}\delta\|\sqrt{u_s}v_x\|^2.\label{linear3.2.1}
\end{eqnarray}
If we take $\delta=L^{\f12}$ in (\ref{linear3.2.1}) we obtain
$$\|v\|\lesssim L^{\f14} (\|\sqrt{u_s} v_{y}\|+\|\sqrt{u_s} v_{x}\|).$$
Secondly
\begin{eqnarray}
&&\va^{\f34}\|q_x\|^2\nonumber\\
=&&\va^{\f34}\int_\Omega q_x^2dxdy=\va^{\f34}\int_\Omega \chi^2(\f y{\va^{\f14}})q_x^2dxdy+\va^{\f34}\int_\Omega [1-\chi^2(\f y{\va^{\f14}})]q_x^2dxdy\nonumber\\
=&&\va^{\f34}\int_\Omega \partial_y(y) \chi^2(\f y{\va^{\f14}})q_x^2dxdy+\va^{\f34}\int_\Omega [1-\chi^2(\f y{\va^{\f14}})](\f {v_x}{u_s}-\f{vu_{sx}}{u_s^2})^2dxdy\nonumber\\
=&&-\va^{\f34}\int_\Omega [2y\chi^2 q_xq_{xy} +2y\va^{-\f14}\chi\chi'q_x^2]+\va^{\f34}\int_\Omega [1-\chi^2(\f y{\va^{\f14}})](\f {v_x}{u_s}-\f{vu_{sx}}{u_s^2})^2\nonumber\\
\lesssim&&\va^{\f12+\f3{8}}\|\sqrt{u_s}q_{xy}\|^2\|\chi q_x\|+\|\sqrt{u_s}v_x\|^2
\end{eqnarray}
which implies that
$$\va^{\f38}\|q_x\|\lesssim A_1+A_2.$$
Next, if $\alpha_1=0$, then we have $u_s=0$ on $y=2$. The estimate of $v$ and $q_x$ around $y=2$ can be obtained
by arguments similar to the case near $y=0$. For $u_s=\mu$ satisfying (\ref{0.3}), we can obtain the estimate in a similar manner.
Thus, we obtain (\ref{linear.2.1}) and (\ref{linear3.2.2}).

Finally, recalling the definition of $\mathcal{X}$, the estimate (\ref{linear.2.3}) follows immediately from the divergence free condition
of $\mau$ and the Sobolev imbedding theorem.
\end{proof}

\begin{remark}\label{re.2}
Let's remind here that in the following subsection, we often use the fact that if $u_s=\mu(y),\ v_s=0$, then
$$u_{sx}=0,\ |u_s|_{C^k}\leq C;$$
while if $(u_s,v_s)$ is constructed in section 2, we have
$$|u_{sy}|_{L^\infty}+|u_{syy}|_{L^\infty}\lesssim \alpha_0,$$
and $u_{sx},v_s,v_{sy}$ degenerate near $y=0$ and $y=2$ as shown in (\ref{2.2.3}).
\end{remark}

\subsubsection{Proof of Theorem \ref{th.linear1} }

Let $A_i$, $i=1,2,3$,  be defined in (\ref{A1})-(\ref{A3}), the proof of Theorem \ref{th.linear1} will be broken up into  several lemmas as well as the method of bootstrap.   First we  multiply (\ref{curl1})  with $v$ to have the following weighted estimates for the first order derivatives:

\begin{lemma}\label{lem.2}
 Under the assumption of Theorem \ref{th.linear1}, the solution $(u,v)$ to system (\ref{linear1})-(\ref{boundary2}) satisfying:
\begin{eqnarray}
A_1^2\leq C(\va^{\f{3}8+\f\gamma2}\|\mau\|_{\mathcal{X}}\|\bar{\mathbf{u}}\|_{\mathcal{X}}^2+L^{\f18}(A_2^2+A_3^2)+\| \mathbf{F}\|_{H^1}^2),\nonumber
\end{eqnarray}
where $C$ is a constant independent of $\va$.
\end{lemma}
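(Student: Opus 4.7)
I would test the vorticity equation~(\ref{curl1}) against $v$ and integrate over $\Omega$. The two principal viscous-like pieces $-u_s v_{yy}\cdot v$ and $-u_s v_{xx}\cdot v$ integrate by parts in $y$ and $x$ respectively to produce
\[
\int_\Omega u_s(v_y^2+v_x^2)\,dxdy = A_1^2,
\]
with every boundary contribution vanishing thanks to $v=0$ on $\{y=0\}\cup\{y=2\}\cup\{x=L\}$ and $v_x|_{x=0}=0$. The remaining lower-order pieces $(u_{syy}+u_{sxx})v^2$, $\int u_{sy}vv_y$ and $\int u_{sx}vv_x$ are all dominated by $L^{1/2}A_1^2$ via $|u_{syy}|_{L^\infty}+|u_{sx}|_{L^\infty}\lesssim\alpha_0$ (Theorem~\ref{thm2.2}) and the Poincar\'e-type bound $\|v\|_{L^2}\lesssim L^{1/4}A_1$ of Lemma~\ref{lem.linear.1}, and hence absorb into $A_1^2$ for $L\ll1$.

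\textbf{Convection remainder, viscous piece, and forcing.} For the term $S(u,v)\cdot v$, integration by parts moves one derivative onto $v$ and leaves the coefficients $u_{sx},v_s,v_{sx},v_{sy}$, which by (\ref{2.2.3})--(\ref{2.2.5}) are $O(\eps)$ and vanish linearly at the walls; combined with Cauchy--Schwarz and $\eps\|\sqrt{u_s}\nabla^2q\|^2\lesssim A_2^2$, this produces a contribution of order $L^{1/2}(A_1^2+A_2^2)$. For the viscous term $-\eps\int\Delta(u_y-v_x)\,v$, I would repeat the two-step integration by parts already carried out in the existence proof (cf.~(\ref{linear3.1.8})): this generates the favorable-sign boundary pieces $\eps|u_y(L,\cdot)|^2+\eps|v_y(0,\cdot)|^2$, both controlled by $A_2$, together with bulk cross-terms that Young's inequality reduces to $L^{1/8}(A_2^2+A_3^2)+\delta A_1^2$. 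The forcing is handled crudely by Cauchy--Schwarz,
\[
|(\curl\mathbf{F},v)|\le \|\mathbf{F}\|_{H^1}\|v\|_{L^2}\le CL^{1/4}\|\mathbf{F}\|_{H^1}A_1\le\tfrac12\|\mathbf{F}\|_{H^1}^2+CL^{1/2}A_1^2,
\]
with the second term absorbable.

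\textbf{Nonlinear term and main obstacle.} Using $\dv\bar{\mathbf{u}}=0$, a direct computation rewrites the nonlinear contribution as $\curl\mathbf{N}=\eps^{M_0}\bigl(\bar u\,\Delta\bar v-\bar v\,\Delta\bar u\bigr)$. Pairing this with $v$, the $L^\infty$ estimates of Lemma~\ref{lem.linear.1} ($\eps^{1/2+\tau}\|\bar u\|_{L^\infty}+\eps^{1/4+\tau}\|\bar v\|_{L^\infty}\lesssim\|\bar{\mathbf{u}}\|_\mathcal{X}$), together with the second-derivative bounds $\eps\|\bar u_{yy}\|+\eps^{1/2}\|\sqrt{u_s}\nabla^2\bar q\|\lesssim\|\bar{\mathbf{u}}\|_\mathcal{X}$ read off from the $\mathcal{X}$-norm (interpolated where necessary against the $\eps^{3/2}\|\nabla^3\bar{\mathbf{u}}\|$ piece), and the Poincar\'e bound $\|v\|_{L^2}\lesssim L^{1/4}\|\mathbf{u}\|_\mathcal{X}$, give after careful $\eps$-accounting the desired bound $C\eps^{3/8+\gamma/2}\|\mathbf{u}\|_\mathcal{X}\|\bar{\mathbf{u}}\|_\mathcal{X}^2$, provided $M_0=\tfrac{11}{8}+$ and $\tau$ is chosen strictly smaller than $\gamma$.

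\textbf{Where the difficulty lies.} The main obstacle is precisely this $\eps$-bookkeeping in the nonlinear step: each factor loses close to $\eps^{1/2}$ relative to $\|\cdot\|_\mathcal{X}$, so the estimate uses essentially all of the gain built into $M_0>1$, and the failure of $\sqrt{u_s}$ to control $\nabla^2\bar v$ uniformly forces the interpolation with the top-order $\eps^{3/2}\|\nabla^3\bar{\mathbf{u}}\|$ piece. A secondary technical subtlety is the viscous step, where $\sqrt{u_s}$ vanishes at $y=0$ (and also at $y=2$ when $\alpha_1=0$): the boundary integrals produced by the two successive IBPs on $-\eps\Delta(u_y-v_x)\cdot v$ must be rearranged so that every remaining term either carries a favorable sign or is absorbed in $A_2^2+A_3^2$ with a strictly sub-unit constant. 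Once these two points are dealt with, combining all of the above and absorbing $\delta A_1^2$ back into the left-hand side yields the lemma.
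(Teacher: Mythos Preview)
Your overall strategy---testing the vorticity equation against $v$ and handling the convection, viscous, $S(u,v)$, forcing, and nonlinear pieces separately---is exactly the paper's approach, and your treatment of the first four pieces matches the paper closely (including the recognition that the delicate object in the viscous step is the boundary contribution at $x=0$, which the paper controls via the weighted splitting $\eps|u_s^{-1/4}v_{xx}(0,\cdot)|\cdot|u_s^{1/4}v(0,\cdot)|$ already seen in the existence argument).

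There is one point in the nonlinear step where your sketch does not quite deliver the stated exponent. Pairing $\eps^{M_0}\bar v\,\Delta\bar u$ directly with $v$ and invoking $\eps\|\bar u_{yy}\|\lesssim\|\bar{\mathbf u}\|_{\mathcal X}$ gives
\[
\eps^{M_0}\|\bar v\|_{L^\infty}\|\bar u_{yy}\|\|v\|\;\lesssim\;\eps^{\frac{11}{8}+\gamma}\cdot\eps^{-\frac14-\tau}\cdot\eps^{-1}\cdot L^{\frac14}\|\bar{\mathbf u}\|_{\mathcal X}^2\|\mathbf u\|_{\mathcal X}
=\eps^{\frac18+\gamma-\tau}L^{\frac14}\|\bar{\mathbf u}\|_{\mathcal X}^2\|\mathbf u\|_{\mathcal X},
\]
which is $\eps^{1/8+}$ rather than the claimed $\eps^{3/8+\gamma/2}$; interpolation against $\eps^{3/2}\|\nabla^3\bar{\mathbf u}\|$ does not help here. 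The paper avoids $\bar u_{yy}$ entirely by first integrating $\int(\bar v\bar u_y+\bar u\bar u_x)_y\,v$ by parts in $y$ (and the companion $x$-term in $x$), so that only first derivatives of $\bar{\mathbf u}$ are paired with first derivatives of $v$, yielding the sharper bound. This is an easy fix and your weaker exponent would still suffice for Theorem~\ref{th.linear1}, but as written your accounting does not reproduce the exponent stated in the lemma.
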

\begin{proof}
We multiply (\ref{curl1}) with $v$ to see that
\begin{eqnarray}
&&\int_\Omega[ -u_s v_{yy}+u_{syy}v-u_sv_{xx}+u_{sxx}v+S(u,v)-\va\Delta(u_y-v_x)]vdxdy\nonumber\\
=&&\int_\Omega(\curl \mathbf{N}+\curl  \mathbf{F})vdxdy,  \label{linear4}
\end{eqnarray}
whence by virtue of (\ref{linear.2.1}) and the boundary condition (\ref{boundary2}),
\begin{eqnarray}
&&\int_\Omega[ -u_s v_{yy}+u_{syy}v-u_sv_{xx}+u_{sxx}v]vdxdy\nonumber\\
=&&\int_\Omega u_s(v_y^2+v_x^2)dxdy+\int_\Omega u_{sy}vv_ydxdy+\int_\Omega (u_{sxx}+u_{syy})v^2dxdy+\int_\Omega u_{sx}vv_xdxdy\nonumber\\
=&&\int_\Omega u_s(v_y^2+v_x^2)dxdy+\int_\Omega(\f12 u_{syy}v^2+u_{sxx}v^2+ u_{sx}vv_x)dxdy\nonumber\\
\gtrsim&&\|\sqrt{u_s} v_{y}\|^2+\|\sqrt{u_s} v_{x}\|^2.
\end{eqnarray}
Using the divergence free condition, one gets
\begin{eqnarray}
&&-\va\int_\Omega\Delta(u_y-v_x)vdxdy\nonumber\\
=&&\va\int_\Omega u_{yy}v_ydxdy-\va\int_0^2 v_{xx}v|_{x=0}dy-\va\int_\Omega v_{xx}v_xdxdy-2\va\int_\Omega v_{xy}v_ydxdy\nonumber\\
=&&\va\int_\Omega u_yu_{xy}dxdy-\va\int_0^2 v_{xx}v|_{x=0}dy-\f12\va\int_0^2 v_x^2|_{x=L}dy+\va\int_0^2 v_y^2|_{x=0}dy\nonumber\\
=&&\f12\va\int_0^2 u_y^2|_{x=L}+\va\int_0^2 v_y^2|_{x=0}
-\va\int_0^2 v_{xx}v|_{x=0}-\f12\va\int_0^2 v_x^2|_{x=L}\nonumber\\
\gtrsim&&\va|u_y(L,\cdot)|^2+\va|v_y(0,\cdot)|^2-\va|u_s^{-\f14}v_{xx}(0,\cdot)||u_s^{\f14}v(0,\cdot)|-\va|u_sq_x(L,\cdot)|^2, \nonumber
\end{eqnarray}
where
\begin{eqnarray}
|u_s^{\f14}v(0,\cdot)|^2=&&\int_0^2 u_s^{\f12}v^2(0,y)dy=\int_\Omega(2u_s^{\f12}vv_x+\f12u_s^{-\f12}u_{sx}v^2)dxdy\nonumber\\
\leq &&\|\sqrt{u_s}v_{x}\|\|v\|+\va^{\f14}\|v\|^2\leq L^{\f14}A_1^2,\nonumber
\end{eqnarray}
 and
\begin{eqnarray}
&&\va^2|u_s^{-\f14}v_{xx}(0,\cdot)|^2=\va^2\int_0^2 u_s^{-\f12}v_{xx}^2dy=\va^2\int_\Omega\partial_x(u_s^{-\f12}v_{xx}^2)dxdy\nonumber\\
=&&\va^2\int_\Omega[2u_s^{-\f12}v_{xx}v_{xxx}-\f12u_s^{-\f32}u_{sx}v_{xx}^2]dxdy\nonumber\\
\lesssim&&\va^2[(\|\sqrt{u_s}q_{xx}\|+\|q_{x}\|)\|v_{xxx}\|+\va^{\f12}(\|\sqrt{u_s}q_{xx}\|+\|q_{x}\|)^2]\nonumber\\
\lesssim&&\va\|\sqrt{u_s}\nabla^2q\|^2+\va^3\|v_{xxx}\|^2.  \label{linear.2.6}
\end{eqnarray}
Consequently, we conclude
\begin{eqnarray}
-\va\int_\Omega\Delta(u_y-v_x)vdxdy\gtrsim\va|u_y(L,\cdot)|^2+\va|v_y(0,\cdot)|^2-L^{\f18}(A_1^2+A_2^2+A_3^2). \label{linear.2.4}
\end{eqnarray}
On the other hand, it is easy to see that
\begin{eqnarray}
&&|\int_\Omega S(u,v)vdxdy|=|\int_\Omega [-\partial_y(u_{sx}u+v_su_y)+\partial_x(v_sv_y+v_{sx}u)]vdxdy|\nonumber\\
=&&|\int_\Omega (u_{sx}u+v_su_y)v_ydxdy+\int_\Omega (v_sv_{xy}+v_{sxx}u)vdxdy|\nonumber\\
\lesssim && L\va^{\f12}\|\sqrt{u_s}v_y\|^2+L\va\|\sqrt{u_s}v_y\|\|v_{yy}\|+\|v\|_{L^2}(\va\|v_{xy}\|+L\va\|\sqrt{u_s}v_y\|)\nonumber\\
\lesssim && L^{\f14}(\|\sqrt{u_s} v_{y}\|^2+\|\sqrt{u_s} v_{x}\|^2)+\va^2\|\sqrt{u_s}\nabla^2q\|^2.
\end{eqnarray}
Finally, by (\ref{linear.2.3}) we arrive at
\begin{eqnarray}
&&|\int_\Omega(\curl \mathbf{N}+\curl  \mathbf{F})vdxdy|\nonumber\\
=&&|-\va^{\f{11}8+\gamma}\int_\Omega[( \bar{v} \bar{u}_y+ \bar{u}\bar{u}_x)_y
-(\bar{v}\bar{v}_y+ \bar{u}\bar{v}_x)_x]vdxdy+\int_\Omega\curl \mcf vdxdy|\nonumber\\
=&&\va^{\f{11}8+\gamma}|\int_\Omega ( \bar{v}\bar{u}_yv_y-\bar{u}\bar{v}_yv_y-v\bar{v}\bar{v}_{xy}-v\bar{v}_x\bar{v}_y+\bar{u}\bar{v}_xv_x)dxdy+\int_\Omega\curl \mcf vdxdy|\nonumber\\
\leq&&\va^{\f{11}8+\gamma}[\|\bar v\|_{L^\infty}(\|\bar u_y\|\|v_y\|+\|v\|\|\bar{v}_{xy}\|)+\|\bar{u}\|_{L^\infty}\|\nabla v\|\|\nabla \bar v\|+\|v\|_{L^\infty}\|\nabla \bar v\|^2]\nonumber\\
&&+L^{\f14}\|\curl  \mathbf{F}\|A_1\nonumber\\
\lesssim&&\va^{\f{3}8+\f\gamma2}\|\mau\|_{\mathcal{X}}\|\bar{\mathbf{u}}\|_{\mathcal{X}}^2+\| \mathbf{F}\|_{H^1}^2+L^{\f12}A_1^2.\label{linear.2.5}
\end{eqnarray}
Putting (\ref{linear4})-(\ref{linear.2.5}) together, we conclude
\begin{eqnarray}
A_1^2\lesssim\va^{\f{3}8+\f\gamma2}\|\mau\|_{\mathcal{X}}\|\bar{\mathbf{u}}\|_{\mathcal{X}}^2+L^{\f18}(A_2^2+A_3^2)+\| \mathbf{F}\|_{H^1}^2,
\end{eqnarray}
which completes the proof of Lemma \ref{lem.2}.
\end{proof}
Next, to well control the convection terms, we shall use $q_x$ as a multiplier to obtain the second-order derivative estimates.
More precisely, we have the following lemma.
\begin{lemma}\label{lem3.1}
Under the assumptions of Theorem \ref{th.linear1}, the solution $(u,v)$ of the system (\ref{linear1})-(\ref{boundary2}) satisfies
\begin{eqnarray}
A_2^2\lesssim&&LA_1^2+\va^{\f18} A_3^2+\va^{\f\gamma2} \|\mau\|_{\mathcal{X}}\|\bar{\mathbf{u}}\|_{\mathcal{X}}^2+(\curl{ \mathbf{F}},q_x).\label{lem.3.1}
\end{eqnarray}
\end{lemma}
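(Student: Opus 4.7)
\smallskip
\noindent\textbf{Proof proposal.}
The plan is to test the curl equation in its divergence form (\ref{curl.2}) against the multiplier $q_x$ and integrate over $\Omega$. This yields the identity
$$\int_\Omega q_x\bigl[-\partial_y(u_s^2 q_y)-\partial_x(u_s^2 q_x)+S(u,v)-\va\Delta(u_y-v_x)\bigr]\,dxdy=\int_\Omega(\curl\mathbf{N}+\curl\mathbf{F})\,q_x\,dxdy,$$
and the task reduces to bounding each of the five pieces. For the principal part $-\partial_y(u_s^2 q_y)-\partial_x(u_s^2 q_x)$, I would integrate by parts in $x$ and $y$ using the boundary conditions in (\ref{boundary2}) (note $v=0$ on $y=0,2$ and on $x=L$ forces $q=0$ there, and $v_x|_{x=0}=0$ gives a clean form on $x=0$). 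This is the exact computation already carried out in the existence proof for Lemma~3.7 and produces the positive boundary contributions $|u_sq_y(0,\cdot)|^2+|u_sq_x(L,\cdot)|^2$ together with an interior residual $\int u_s u_{sx}(q_x^2+q_y^2)$. Thanks to the smallness $|u_{sx}|\lesssim\va$ from (\ref{2.2.3}) and a Poincar\'e inequality in $x$ (cf.~Remark \ref{re.1}), this residual is dominated by $L\va\|\sqrt{u_s}\nabla^2 q\|^2\lesssim L\,A_2^2$ and absorbed on the left.

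The main obstacle is the viscous term $-\va\int_\Omega\Delta(u_y-v_x)q_x\,dxdy$, since neither $q_x$ nor $\Delta(u_y-v_x)$ is a clean derivative of the other, and we need to extract the full $\va\|\sqrt{u_s}\nabla^2q\|^2$ coercive contribution of $A_2^2$. I would use $u_x=-v_y$ to rewrite $\Delta(u_y-v_x)=u_{yyy}-2v_{xyy}-v_{xxx}$ and expand $q_x=v_x/u_s-vu_{sx}/u_s^2$, then integrate by parts repeatedly. The bulk terms produce the desired coercive $\va\|\sqrt{u_s}\nabla^2q\|^2$, while the boundary contributions on $x=0,L$ are controlled by the same trace bound as in (\ref{linear.2.6}), which converts boundary $v_{xx}$ traces into the interior quantity $\va^3\|v_{xxx}\|^2\lesssim A_3^2$. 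The net estimate this step produces is morally the inequality (\ref{linear.2.11}) cited in the existence proof: a gain of $\va\|\sqrt{u_s}\nabla^2q\|^2$ on the left, at the cost of a $\va^{1/8}A_3^2$ term.

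For the convection term $\int S(u,v)q_x\,dxdy$, I would use the explicit form $S=\partial_y(u_{sx}u+v_su_y)-\partial_x(v_sv_y+v_{sx}u)$ and the decay estimates (\ref{2.2.3})--(\ref{2.2.5}) on $u_{sx}$ and $v_s$. After integration by parts to move derivatives off $q_x$ onto $S$, Cauchy--Schwarz combined with the first-order bound (\ref{linear3.2.2}) $\va^{3/8}\|q_x\|\lesssim A_1+A_2$ yields an $LA_1^2+\text{(small)}\,A_2^2$ contribution. For the nonlinear term $\int\curl\mathbf{N}\,q_x\,dxdy$, expanding $\mathbf{N}=-\va^{M_0}(\bar v\bar u_y+\bar u\bar u_x,\bar v\bar v_y+\bar u\bar v_x)$, integrating by parts as in (\ref{linear.2.5}), and using the $L^\infty$ bounds in (\ref{linear.2.3}) on $\bar u,\bar v,\bar v_y$ together with (\ref{linear3.2.2}) gives a contribution of size $\va^{\gamma/2}\|\mau\|_{\mathcal{X}}\|\bar{\mau}\|_{\mathcal{X}}^2$; note that the exponent matches because $M_0=\tfrac{11}{8}+$ and $\va^{3/8}\|q_x\|$ costs $\va^{3/8}$. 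The forcing term $(\curl\mathbf{F},q_x)$ is kept as is. Finally, combining all contributions, absorbing $LA_2^2$ and $\va^{1/8}A_2^2$ back into the left-hand side by choosing $L$ and $\va$ small enough, yields (\ref{lem.3.1}).
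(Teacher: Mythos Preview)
Your proposal is correct and follows essentially the same route as the paper: multiply (\ref{curl.2}) by $q_x$, extract the boundary terms $|u_sq_y(0,\cdot)|^2+|u_sq_x(L,\cdot)|^2$ from the principal part, derive (\ref{linear.2.11}) for the viscous piece, and bound $S$, $\curl\mathbf{N}$ as you indicate. Two points of divergence are worth flagging. First, the paper does \emph{not} expand $q_x=v_x/u_s-vu_{sx}/u_s^2$; instead it keeps $q_x$ intact, integrates by parts to produce terms like $\int v_{xy}q_{xy}$ and $\int v_{yyy}q_y$, and only then substitutes $v=u_sq$ so that $u_s$ stays in the numerator---your expansion would introduce bare $1/u_s$ factors near $y=0$ that are awkward to control. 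Second, the step you summarize as ``the bulk terms produce the desired coercive $\va\|\sqrt{u_s}\nabla^2q\|^2$'' hides the most delicate part of the proof: the $\va\|\sqrt{u_s}q_{yy}\|^2$ contribution comes from $\va\int v_{yyy}q_y$, and after integrating by parts in $y$ one must check the signs of the boundary terms at $y=0,2$. At $y=0$ one uses $u_s=0$ and $u_{sy}>0$ to get $-\va\int_0^L u_{sy}q_y^2|_{y=0}\,dx$ with the correct sign; at $y=2$ one uses that either $u_s>0$ (whence $q_y=0$) or $u_s=0$ with $u_{sy}\le 0$. This sign analysis is the content of (\ref{3.2.4})--(\ref{linear.2.8}) and is what makes (\ref{linear.2.11}) work; it is proved here, not in the existence section (the citation there points forward to this lemma).
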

\begin{proof}
We multiply (\ref{curl.2}) with $q_x$ to have
\begin{eqnarray}
&&\int_\Omega[ -\partial_y(u_s^2\partial_yq)-\partial_x(u_s^2q_x)+S(u,v)-\va\Delta(u_y-v_x)]q_xdxdy\nonumber\\
=&&\int_\Omega(\curl \mathbf{N}+\curl  \mathbf{F})q_xdxdy.\nonumber
\end{eqnarray}
First, the convection terms can be bounded as follows.
\begin{eqnarray}
&&\int_\Omega q_x[-\partial_y(u_s^2\partial_yq)-\partial_x(u_s^2q_x)]dxdy\nonumber\\
=&&\int_\Omega u_s^2q_{xy}q_ydxdy-\int_\Omega u_s^2q_xq_{xx}dxdy-\int_\Omega 2u_su_{sx}q_x^2dxdy\nonumber\\
=&&-\f12\int_\Omega u_s^2q_y^2|_{x=0}dy-\f12\int_\Omega u_s^2q_x^2|_{x=L}dy+\f12\int_\Omega u_s^2q_x^2|_{x=0}dy\nonumber\\
&&-\int_\Omega u_su_{sx}(q_x^2+q_y^2)dxdy\nonumber\\
\lesssim&&-\f12\int_\Omega u_s^2q_y^2|_{x=0}dy-\f12\int u_s^2q_x^2|_{x=L}dy+L^{\f12}A_2^2.\label{linear.2.7}
\end{eqnarray}
Now, we turn to controlling the viscous terms:
\begin{eqnarray}
&&-\va\int_\Omega q_x\Delta(u_y-v_x)dxdy\nonumber\\
=&&-\va\int_\Omega u_{yyy}q_xdxdy +\va\int_\Omega(2v_{xyy}+v_{xxx})q_xdxdy\nonumber\\
=&&\va\int_\Omega u_{yy}q_{xy}dxdy -2\va\int_\Omega v_{xy}q_{xy}dxdy-\va\int_\Omega v_{xx}q_{xx}dxdy-\va\int_0^2v_{xx}q_x|_{x=0}dy\nonumber\\
=&&-\va\int_\Omega u_{xyy}q_{y} dxdy-2\va\int_\Omega (u_s q_{xy}+u_{sy}q_x+u_{sx}q_y)q_{xy}dxdy\nonumber\\
&&-\va\int_\Omega (u_s q_{xx}+2u_{sx}q_x+u_{sxx}q)q_{xx}dxdy-\va\int_0^2\f{u_{sx}}{u_s}v_{xx}q|_{x=0}dy\nonumber\\
=&&\va\int_\Omega v_{yyy}q_{y}dxdy  -\va\int_\Omega (2u_s q_{xy}^2+u_s q_{xx}^2)dxdy+\va\int_\Omega u_{syy}q_x^2dxdy\nonumber\\
&&-\va\int_\Omega (2u_{sx}q_xq_{xx}+u_{sxx}qq_{xx}+2u_{sx}q_yq_{xy})dxdy-\va\int_0^2\f{u_{sx}}{u_s}v_{xx}q|_{x=0}dy,  \label{linear.2.2}
\end{eqnarray}
where from Lemma \ref{lem.linear.1} and (\ref{linear.2.6}) one gets
\begin{eqnarray}
&&|\va\int_\Omega [u_{syy}q_x^2-(2u_{sx}q_xq_{xx}+u_{sxx}qq_{xx}+2u_{sx}q_yq_{xy})]dxdy-\va\int_0^2\f{u_{sx}}{u_s}v_{xx}q|_{x=0}dy|\nonumber\\
\lesssim&& \va \|q_x\|^2+\va^{\f32}\|\sqrt{u_s}q_{xx}\|^2+\va^{\f32}\|\sqrt{u_s}q_{xy}\|^2++\va L^{\f12}|u_{sx}u_s^{-\f34}|_{L^\infty}|u_s^{-\f14}v_{xx}(0,\cdot)|\|q_x\|\nonumber\\
\lesssim&& \va L^{\f12} \|\sqrt{u_s}\nabla^2 q\|^2+\va^{\f12+\f18}L^{\f12}(\va^{\f12}\|\sqrt{u_s}\nabla^2q\|+\va^{\f32}\|v_{xxx}\|) \|\sqrt{u_s}\nabla^2 q\|\nonumber\\
\lesssim&& L^{\f12}\|\sqrt{u_s}\nabla^2 q\|^2+\va^{\f14+3} \|v_{xxx}\|^2,
\end{eqnarray}
while
\begin{eqnarray}
&&\va\int_\Omega v_{yyy}q_{y}dxdy\nonumber\\
=&&-\va\int_0^L (u_sq_{yy}+2u_{sy}q_y)q_{y}|_{y=0}dx+\va\int_0^L (u_sq_{yy}+2u_{sy}q_y)q_{y}|_{y=2}dx\nonumber\\
&&-\va\int_\Omega (u_s q_{yy}+2u_{sy}q_y+u_{syy}q)q_{yy}dxdy\nonumber\\
=&&-\va\int_0^L (u_sq_{yy}+u_{sy}q_y)q_{y}|_{y=0}dx+\va\int_0^L (u_sq_{yy}+u_{sy}q_y)q_{y}|_{y=2}dx\nonumber\\
&&-\va\int_\Omega u_s q_{yy}^2+\va\int_\Omega(u_{syy}q_y^2-u_{syy}qq_{yy})dxdy.\label{3.2.4}
\end{eqnarray}

If $u_s>0$ on $y=2$, then we have $q_y=0$, while when $u_s=0$ on $y=2$, we have  $ u_{sy}\leq 0$ on $y=2$. So, we always have
\begin{eqnarray}
\va\int_0^L (u_sq_{yy}+u_{sy}q_y)q_{y}|_{y=2}dx\leq0,
\end{eqnarray}
which  implies
\begin{eqnarray}
&&\va\int_\Omega v_{yyy}q_{y}dxdy  \nonumber\\
&& \leq -\va\int_0^L u_{sy}q_y^2|_{y=0}dx-\va\int_\Omega u_s q_{yy}^2+\va\int_\Omega(u_{syy}q_y^2-u_{syy}qq_{yy})dxdy \nonumber\\
&& \leq -\va\int_0^L u_{sy}q_y^2|_{y=0}dx-\va\int_\Omega u_s q_{yy}^2+\va\|q_y\|^2+\va\|\sqrt{u_s}q_{yy}\|_{L^2}\|u_s^{-\f12}q\| \nonumber\\
&& \leq -\va\int_0^L u_{sy}q_y^2|_{y=0}dx-\va\int_\Omega u_s q_{yy}^2+\va L^{\f14}\|\sqrt{u_s}\nabla^2q\|.\label{linear.2.8}
\end{eqnarray}

Putting (\ref{linear.2.2})-(\ref{linear.2.8}) together, we have
\begin{equation}
-\va\int_\Omega q_x\Delta(u_y-v_x)dxdy
\lesssim -\va \|\sqrt{u_s}\nabla^2q\|^2-\va|q_y(\cdot,0)|_{L^2}^2+\va^{\f14+3}\|v_{xxx}\|^2.\label{linear.2.11}
\end{equation}
Furthermore, by virtue of (\ref{2.2.3}) and Lemma \ref{lem.linear.1}
\begin{eqnarray}
&&|\int_\Omega S(u,v)q_xdxdy|\nonumber\\
=&&|\int_\Omega [-\partial_y(u_{sx}u+v_su_y)+\partial_x(v_sv_y+v_{sx}u)]q_xdxdy|\nonumber\\
=&&|\int_\Omega[(u_{sx}u+v_su_y)q_{xy}+v_{sxx}uq_x-v_{sy}v_xq_x-v_{s}v_xq_{xy}]dxdy|\nonumber\\
\lesssim&&\|\sqrt{u_s}q_{xy}\|(\va^{\f34}\|u\|+\va\|u_y\|+\va\|v_x\|)+\va\|q_{x}\|(\|u\|+\|v_x\|)\nonumber\\
\lesssim&& L(A_2^2+A_1^2).
\end{eqnarray}
Finally, from Lemma \ref{lem.linear.1} one gets
\begin{eqnarray}
&&|\int_\Omega \curl Nq_xdxdy|=\va^{\f{11}8+\gamma}|\int_\Omega q_x[\partial_y(\bar{v}\bar{u}_y-\bar{u}\bar{v}_y)-\partial_x(\bar{v}\bar{v}_y+\bar{u}\bar{v}_x)]dxdy|\nonumber\\
=&&\va^{\f{11}8+\gamma}|-\int_\Omega q_{xy}\bar{v}\bar{u}_ydxdy-\int_\Omega q_x(\bar{u}_y\bar{v}_y+\bar{u}\bar{v}_{yy}+\bar{v}\bar{v}_{xy}+\bar{u}\bar{v}_{xx})dxdy|\nonumber\\
\leq&&\va^{\f{11}8+\gamma}\{\|\sqrt{u_s}q_{xy}\|_{L^2}\|\bar{u}_y\|_{L^2}|\bar{v}_y|_{L^\infty}^{\f12}|\bar{v}|_{L^\infty}^{\f12}+\|q_x\|_{L^2}\|\bar{u}_y\|_{L^2}|\bar{v}_y|_{L^\infty}\nonumber\\
&&+\|q_x\|_{L^2}(|\bar{v}|_{L^\infty}+|\bar{u}|_{L^\infty})\|\nabla^2\bar{v}\|_{L^2}\}\nonumber\\
\lesssim&&\va^{\f\gamma2}\|\mau\|_{\mathcal{X}}\|\bar{\mathbf{u}}\|_{\mathcal{X}}^2.\label{linear.2.10}
\end{eqnarray}
Putting (\ref{linear.2.7})-(\ref{linear.2.10}) together, we obtain the estimate (\ref{lem.3.1}).
\end{proof}

To obtain the estimates for the third order derivatives, we shall first multiply (\ref{curl1}) with $\va^{2} v_{xxx}$
to obtain the estimate for $\|\nabla^2v_x\|$. This will be done in the following lemma.
\begin{lemma}\label{lem3.2}
Under the assumptions of Theorem \ref{th.linear1}, the solution $(u,v)$ to the system (\ref{linear1})-(\ref{boundary2}) satisfies
\begin{eqnarray}
&&A_3^2\leq C\{ A_2^2+\va^{\f38+\gamma}\|\mau\|_{\mathcal{X}}^2\|\bar{\mau}\|_{\mathcal{X}}^2+\va\| \mathbf{F}\|_{H^1}^2\},\nonumber\\\label{linear3.2.4}
\end{eqnarray}
where the constant $C$ does not depend on $\va$.
\end{lemma}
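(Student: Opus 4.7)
The plan is to test the vorticity equation~\eqref{curl1} against $\va^{2}v_{xxx}$ and integrate over $\Omega$, in the same spirit as the existence calculation leading to~\eqref{linear3.1.2}, but now keeping the nonlinear term $\curl\mathbf{N}$ and the forcing $\curl\mathbf{F}$ explicit so as to match the right-hand side of~\eqref{linear3.2.4}. The first step is to establish the viscous identity
\[
-\va^{3}\int_{\Omega}\Delta(u_{y}-v_{x})\,v_{xxx}\,dxdy
=\va^{3}\bigl(\|v_{xxx}\|^{2}+\|v_{xxy}\|^{2}+\|v_{xyy}\|^{2}\bigr)=A_{3}^{2},
\]
obtained by first rewriting $\Delta(u_{y}-v_{x})=u_{yyy}-2v_{xyy}-v_{xxx}$ via the divergence-free identities $u_{xy}=-v_{yy}$ and $u_{xx}=-v_{xy}$, then integrating by parts twice, and killing every boundary contribution using~\eqref{boundary2}: $v=v_{x}=v_{xx}=0$ on $y=0,2$; $u=u_{y}=u_{yy}=0$ and $v_{xy}=0$ on $x=0$; and $v_{y}=v_{xx}=v_{xxy}=0$ on $x=L$ (the last two by tangential differentiation of the prescribed conditions).

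For the remaining linear terms on the left of~\eqref{curl1} I would write $v=u_{s}q$ and invoke Remark~\ref{re.2} together with the profile estimates~\eqref{2.2.3}--\eqref{2.2.5} to bound each of $\|u_{s}v_{yy}\|$, $\|u_{s}v_{xx}\|$, $\|u_{syy}v\|$, $\|u_{sxx}v\|$ and $\|S(u,v)\|$ by $A_{2}/\sqrt{\va}+A_{1}$. Pairing against $\|v_{xxx}\|\le A_{3}/\va^{3/2}$ and applying Young's inequality $A_{j}A_{3}\le\delta A_{3}^{2}+C_{\delta}A_{j}^{2}$ produces a $CA_{2}^{2}$ contribution (the residual $A_{1}^{2}$ being already controlled by $A_{2}^{2}$ via Lemma~\ref{lem.2}) plus a small absorbable $A_{3}^{2}$-piece.

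For the nonlinear piece $\va^{2}\int\curl\mathbf{N}\,v_{xxx}$, I would first integrate by parts in $y$, which is legal since $v_{xxx}$ vanishes on $y=0,2$, moving one derivative onto $v_{xxx}$; then combine the $L^{\infty}$--$L^{2}$ Sobolev estimates of Lemma~\ref{lem.linear.1} applied to $\bar{\mau}$ with the prefactor $\va^{M_{0}}=\va^{11/8+\gamma}$ and the bound $\va^{2}\|\na v_{xx}\|\lesssim\va^{1/2}\|\mau\|_{\mathcal{X}}$ to arrive at
\[
\va^{2}\Bigl|\int\curl\mathbf{N}\cdot v_{xxx}\Bigr|\lesssim \va^{(3/8+\gamma)/2}\,\|\bar{\mau}\|_{\mathcal{X}}^{2}\,\|\mau\|_{\mathcal{X}},
\]
which by Young splits into $\tfrac12\va^{3/8+\gamma}\|\mau\|_{\mathcal{X}}^{2}\|\bar{\mau}\|_{\mathcal{X}}^{2}$, matching~\eqref{linear3.2.4}, plus an absorbable term. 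For the forcing, $\va^{2}\|\curl\mathbf{F}\|\,\|v_{xxx}\|\le\va^{1/2}\|\mathbf{F}\|_{H^{1}}A_{3}\le\delta A_{3}^{2}+C_{\delta}\va\|\mathbf{F}\|_{H^{1}}^{2}$.

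The main obstacle is the boundary-term bookkeeping in the viscous identity: with three $x$-derivatives in the test function, candidate boundary integrals appear on all four sides of $\Omega$ and several of them involve $u_{xyy}$ or $v_{yyy}$, which are not obviously zero. Eliminating them requires combining no-slip on $y=0,2$ with tangential derivatives of $v_{x}|_{x=0}=0$ and $v_{xx}|_{x=L}=0$ and with the divergence-free identity, in the same spirit as the auxiliary computation~\eqref{linear.2.6} in the existence proof. Once this bookkeeping is completed, collecting the four groups of estimates and absorbing the small $A_{3}^{2}$ multiples on the right yields~\eqref{linear3.2.4}.
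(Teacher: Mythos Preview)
Your proposal is correct and follows essentially the same route as the paper: multiply the vorticity equation~\eqref{curl1} by $\va^{2}v_{xxx}$, extract $A_{3}^{2}$ from the viscous term via integration by parts (all boundary contributions vanish exactly as you describe), and bound the linear, nonlinear, and forcing pieces separately with Young's inequality.

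Two minor discrepancies are worth noting. First, the viscous identity actually produces $\va^{3}(\|v_{xxx}\|^{2}+2\|v_{xxy}\|^{2}+\|v_{xyy}\|^{2})\ge A_{3}^{2}$, not equality. Second, for the nonlinear piece the paper does \emph{not} integrate by parts: it first expands $\curl\mathbf{N}=-\va^{M_{0}}[\bar v\bar u_{yy}-\bar u\bar v_{yy}-\bar v\bar v_{xy}-\bar u\bar v_{xx}]$ using the divergence-free relation $\bar u_{x}=-\bar v_{y}$, and then bounds each product directly by $\|\bar v\|_{L^{\infty}}\|\bar u_{yy}\|$, $\|\bar u\|_{L^{\infty}}\|\nabla^{2}\bar v\|$, etc., paired with $\|v_{xxx}\|$. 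Your proposed IBP in $y$ would move a derivative onto $v_{xxx}$ and produce $v_{xxxy}$, which is fourth order and does not match the quantity $\va^{2}\|\nabla v_{xx}\|$ you then invoke; the direct expansion is both cleaner and what is actually needed to reach the stated bound. Finally, your detour through Lemma~\ref{lem.2} to handle the $A_{1}$ contribution works but is unnecessary: the paper simply absorbs $\|u_{syy}v\|$ and $\|u_{sxx}v\|$ into $C\|\nabla^{2}v\|$ via Poincar\'e (using $v|_{y=0,2}=0$), so no $A_{1}$ term appears.
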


\begin{proof}
We multiply (\ref{curl1})  with $\va^{2} v_{xxx}$   to have\begin{eqnarray}
&& \va^{2}\int_\Omega  v_{xxx}(-u_s v_{yy}+u_{syy}v-u_sv_{xx}+u_{sxx}v+S(u,v))dxdy\nonumber\\&&-\va^3\int_\Omega \Delta (u_y-v_x)v_{xxx}dxdy
=\va^2\int_\Omega(\curl \mathbf{N}+\curl  \mathbf{F})v_{xxx}dxdy.\nonumber
\end{eqnarray}
First of all,
\begin{eqnarray}
&&\va^2|\int_\Omega(-u_s v_{yy}+u_{syy}v-u_sv_{xx}+u_{sxx}v)v_{xxx}dxdy|\nonumber\\
\leq &&C\va^2\|\nabla^2v\|\|v_{xxx}\|\leq\delta A_3^2+C(\delta)A_2^2,\nonumber
\end{eqnarray}
where $\delta>0$ is a sufficiently small constant. By Lemma \ref{lem.linear.1} we see that
\begin{eqnarray}
&&\va^2|\int_\Omega S(u,v)v_{xxx}dxdy|\nonumber\\
=&&\va^2|\int_\Omega[ -\partial_y(u_{sx}u+v_su_y)+\partial_x(v_sv_y+v_{sx}u)]v_{xxx}dxdy|\nonumber\\
=&&\va^2|\int_\Omega[v_{syy}u-v_su_{yy}+v_sv_{xy}+v_{sxx}u]v_{xxx}dxdy|\nonumber\\
\leq&&\va^{2}\left\{\va^{\f12}\|u\|+\va\|u_{y}\|+\va\|u_{xy}\|+\va\|v_{xy}\|\right\}(\|v_{xxx}\|+\|v_{xxy}\|)\nonumber\\
\leq&&\delta A_3^3+\va^{\f12}A_2^2, \nonumber
\end{eqnarray}
where $\delta>0$ is a constant small enough.

Next, we turn to estimating the viscous terms. In view of the boundary condition (\ref{boundary2}), we see that
\begin{eqnarray}
&&\va^{3}\int_\Omega(-u_{yyy}+v_{xxx}+2v_{xyy})v_{xxx}dxdy\nonumber\\
=&&-\va^3\int_\Omega v_{yyyy}v_{xx}dxdy+\va^3\int_\Omega v_{xxx}^2dxdy+2\va^3\int_\Omega v_{xxy}^2dxdy\nonumber\\
=&&\va^3\int_\Omega (v_{xxx}^2+2v_{xxy}^2+v_{xyy}^2)dxdy.\nonumber
\end{eqnarray}
For the nonlinear terms, we have
\begin{eqnarray}
&&\va^2|\int ( \curl N)v_{xxx}|=\va^{2+\f{11}8+\gamma}|\int  v_{xxx}[\bar{v}\bar{u}_{yy}-\bar{u}\bar{v}_{yy}-\bar{v}\bar{v}_{xy}-\bar{u}\bar{v}_{xx})]|\nonumber\\
\leq&&\va^{2+\f{11}8+\gamma}\|v_{xxx}\|[\|\bar{v}\|_{L^\infty}\|\bar{u}_{yy}\|_{L^2}+
(\|\bar{u}\|_{L^\infty}+\|\bar{v}\|_{L^\infty})\|\nabla^2v\|_{L^2}]\nonumber\\
\leq&&\va^{\f{3}8+\gamma}\|\mau\|_{\mathcal{X}}\|\bar{\mau}\|_{\mathcal{X}}^2.\nonumber
\end{eqnarray}

Finally,
\begin{eqnarray}
\va^2|(\curl{\mcf},v_{xxx})|
\leq\va^2 \|\mcf\|_{H^1}\|v_{xxx}\|\leq \delta A_3^2+\va \| \mathbf{F}\|_{H^1}^2.\nonumber
\end{eqnarray}
Thus, (\ref{3.2.4}) follows from combing the above estimates together.
\end{proof}
\begin{remark}
In the case $\alpha_1=0$, we can control $\|\sqrt{u_s}q_{yyy}\|$, and consequently control $\|v_{yyy}\|$  by the method used in \cite{GI}.
For the general case, however,  we can not control $\|\sqrt{u_s}q_{yyy}\|$ as we do not always have good sign for $u_{sy}$
on the upper bound $y=2$.
\end{remark}
Next, we use the forth-order estimates of $v$ as well as Sobolev imbedding to get the bound of $\|v_{yyy}\|$. To this end,
differentiating (\ref{curl1}) with respect to $x$,  we can obtain the following biharmonic equation for $v$
\begin{equation}
\va\Delta^2v=G,\label{linear.2.9}
\end{equation}
where $$G=\partial_x\curl{\mathbf{N}}+\partial_x \mathbf{F}-\partial_x[-u_s\Delta v+\Delta u_{s} v+S(u,v)].$$
Recalling that we already have the following boundary condition for $v$:
$$v_x|_{x=0}=v|_{x=L}=v_{xx}|_{x=L}=0,\ v=v_y=0\ \mbox{ on }\ \{y=0\}\cup\{y=2\},$$
we only need one more condition for $v$ on $x=0$. Using  equation (\ref{curl1}) and boundary condition $u|_{x=0}=v_x|_{x=0}=0$, we can obtain :
 $$\va v_{xxx}(0,y)=[\curl{\mathbf{N}}+\curl \mathbf{F}+u_s\Delta v-\Delta u_{s}v-S(u,v)]|_{x=0}. $$
Besides, it is easy to find that $ v_{xxx}(0,0)= v_{xxx}(0,2)=0$.

 In the following, we shall first homogenize the boundary condition for $v_{xxx}$ on $x=0$.
 Then, making use of the even extension of $v$ with respect to $x=0$, and the odd extension of $v$ with respect to $x=L$,
 we obtain a Dirichlet problem for the biharmonic equation (\ref{linear.2.9}) in a new domain $\Omega^*$.

To homogenize the boundary condition of $v_{xxx}$ on $x=0$, we construct a function $v_0$, such that
$$\partial_xv_0(0,y)=0,\ \partial_{xxx}v_0(0,y)=v_{xxx}(0,y),\  \partial_{y}v_0=0,\ \text{on}\ \{y=0\}\cup\{y=2\}.$$
In fact, we can first define a function $W$ satisfying
\begin{equation}\label{fourth.2}\begin{cases}
\Delta W =0,\ \text{in}\ \Omega,\\
W =0,\ \text{on}\ \{x=L\}\cup\{y=0\}\cup\{y=2\},\\
\partial_xW =v_{xxx}(0,y),\ \text{on}\ \{x=0\}.
\end{cases}\end{equation}
Thanks to the compatibility condition $v_{xxx}(0,0)= v_{xxx}(0,2)=0$, we have $W \in H^2(\Omega)$ satisfying
$$\|W \|_{H^2}\leq C |v_{xxx}(0,\cdot)|_{H^{\f12}([0,2])}$$
Next, we define $v_0$ by solving the following system:
\begin{equation}\label{fourth.1}\begin{cases}
\Delta v_0= W ,\ \text{in}\ \Omega,\\
\partial_yv_0=0,\ \text{on}\ \{y=0\}\cup\{y=2\},\\
\partial_xv_0=0,\ \text{on}\ \{x=0\},\\
v_0=0,\ \text{on}\ \{x=L\}.
\end{cases}\end{equation}
Then, the system (\ref{fourth.1}) has a unique solution $v_0\in H^4(\Omega)$ with the estimate
\begin{equation*}
\|v_0\|_{H^4}\leq C\|W \|_{H^2}\leq C |v_{xxx}(0,\cdot)|_{H^{\f12}([0,2])}.
\end{equation*}
Combining the system (\ref{fourth.2}) with (\ref{fourth.1}), we have
$$\partial_{xxx}v_0(0,y)=\partial_x\Delta v_0(0,y)-\partial_{xyy}v_0(0,y)=v_{xxx}(0,y),$$
$$ v_x(0,y)= \partial_{xxx}v_0(0,0)=\partial_{xxx}v_0(0,2)=0.$$
\begin{remark}
Here we do not need $v_0=0$ on $\{y=0\}\cup\{y=2\}$.
\end{remark}
Let $\chi(x)$ be the cutoff function defined in (\ref{cutoff}), we denote by $\bar v=v-v_0\chi(\f x{L/4})$.
Then $\bar v$ satisfies the following system:
\begin{eqnarray}
\begin{cases}
\Delta^2\bar v=F,\ \text{in}\ \Omega,\\
\bar v_x=\bar v_{xxx}=0,\ \text{on}\ x=0,\\
\bar v=\bar v_{xx}=0,\ \text{on}\ x=L,\\
\bar{v}=-v_0\chi(\f x{L/4}),\ \text{on}\ \{y=0\}\cup\{y=2\},\\
\bar{v}_y=0,\ \text{on}\ \{y=0\}\cup\{y=2\},\\
\end{cases}
\end{eqnarray}
where $F=\va^{-1}G-\Delta^2 (v_0\chi(\f x{L/4}))$.

Now, if we take the even extension of $\bar v$ with respect to $x=0$ and denote the new function by $v_{even}$, then thanks to the boundary condition $\partial_yv_0=0$ on $\{y=0\}\cup\{y=2\}$, as well as the compatibility condition $\partial_{xxx}v_0(0,0)=\partial_{xxx}v_0(0,2)=\partial_xv_0(0,0)=\partial_xv_0(0,2)=0$, we have
$$\partial_yv_{evev}(\cdot,0)=\partial_yv_{evev}(\cdot,2)\equiv0,$$
and
$$|v_{even}(\cdot,0)|_{H^{3+\f12}(-L,L)}+|v_{even}(\cdot,2)|_{H^{3+\f12}(-L,L)}\leq \|v_0\|_{H^4(\Omega)}.$$
Next, following a process similar to that in the proof of Lemma \ref{lem1}, we get
\begin{equation*}
\|\bar v\|_{H^4}\leq\|v_0\|_{H^4}+\|F\|_{L^2}\leq |v_{xxx}(0,\cdot)|_{H^{\f12}([0,2])}+\va^{-1}\|G\|_{L^2},
\end{equation*}
 and  consequently
 \begin{equation}
\va^{\f52}\|v\|_{H^4}\lesssim \va^{\f52}\|\bar v\|_{H^4}+\va^{\f52}\|v_0\|_{H^4}\lesssim \va^{\f52}|v_{xxx}(0,\cdot)|_{H^{\f12}([0,2])}+\va^{\f32}\|G\|_{L^2}.\label{3.2.3}
\end{equation}

Recalling  the expression of $v_{xxx}(0,y)$ and $G$,  we have
\begin{eqnarray}
\va^{\f52}\|v\|_{H^4} \lesssim && \va^{\f32}\|\curl{\mathbf{N}}+\curl \mathbf{F}+u_s\Delta v+\Delta u_{s}v-S(u,v)\|_{H^1}\nonumber\\
\lesssim && \va^{\f32}\|\curl{(\mathbf{N}+ \mathbf{F})}\|_{H^1}+\va^{\f32}\|v\|_{H^3}+\va\|v\|_{L^2}+\va^{\f52}\|u_{yy}\|_{H^1}.\label{fourth.5}
\end{eqnarray}
The Sobolev imbedding theory implies
\begin{equation}
\va^{\f32}\|v\|_{H^3}\leq \va^{\f32}(\delta\va\|v\|_{H^4}+C(\delta)\va^{-1}\|v\|_{H^2})=\delta\va^{\f52}\|v\|_{H^4}+C(\delta)\va^{\f12}\|v\|_{H^2}\label{fourth.3}\end{equation}
here $\delta>0$ is a constant small enough. Besides,
\begin{eqnarray}
\va^{\f32}\|\curl{\mathbf{N}}\|_{H^1}=&&\va^{\f{11}8+\gamma+\f32}\|( \bar{v} \bar{u}_y+ \bar{u}\bar{u}_x)_y
-(\bar{v}\bar{v}_y+ \bar{u}\bar{v}_x)_x\|_{H^1}\nonumber\\
=&&\va^{\f{11}8+\gamma+\f32}\| \bar{v} \bar{u}_{yy}+ \bar{u}\bar{v}_{yy}
-\bar{v}\bar{v}_{xy}+ \bar{u}\bar{v}_{xx}\|_{H^1}\nonumber\\
\lesssim&&\va^{\f{7}8+\f\gamma2}\|\bar{\mau}\|_{\mathcal{X}}^2.\label{fourth.4}
\end{eqnarray}
Putting (\ref{fourth.3}) and (\ref{fourth.4}) into (\ref{fourth.5}), we have
proved the following Lemma:
\begin{lemma}\label{lem3.3}
Under the assumptions of Theorem \ref{th.linear1}, the solution $(u,v)$ to the system (\ref{linear1})-(\ref{boundary2}) satisfies
\begin{equation}
\va^{\f52}\|v\|_{H^4}\leq C(A_2+\va\|\mau\|_{\mathcal{X}}+\va^{\f78+\f\gamma2}\|\bar{\mau}\|_{\mathcal{X}}^2+\va^{\f32}\| \mathbf{F}\|_{H^2}),\label{fourth.7}
\end{equation}
where the constant $C$ does not depend on $\va$.
\end{lemma}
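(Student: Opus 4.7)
The plan is to derive a biharmonic equation for $v$ by differentiating the curl equation \eqref{curl1} in $x$, so that $\va\Delta^2 v = G$ where $G$ collects $\partial_x\curl\mathbf{N}$, $\partial_x\mathbf{F}$, and the lower-order convection terms $\partial_x[-u_s\Delta v+\Delta u_s\, v + S(u,v)]$. Among the boundary data for $v$ we already have the homogeneous conditions $v|_{x=L}=v_{xx}|_{x=L}=0$, $v_x|_{x=0}=0$ and $v=v_y=0$ on $\{y=0\}\cup\{y=2\}$, which are enough for three of the four Dirichlet-type traces on $\partial\Omega$; the missing trace $v_{xxx}|_{x=0}$ is not zero, but it can be \emph{read off} from the curl equation evaluated at $x=0$: using $u|_{x=0}=v_x|_{x=0}=0$ one finds $\va v_{xxx}(0,y)=[\curl\mathbf{N}+\curl\mathbf{F}+u_s\Delta v-\Delta u_s\, v-S(u,v)]|_{x=0}$. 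Compatibility at the two corners $(0,0)$ and $(0,2)$ is automatic because the right-hand side vanishes there thanks to the wall conditions.

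Next I would homogenize this nonzero trace. I would first solve a mixed Poisson problem $\Delta W=0$ in $\Omega$ with $\partial_xW|_{x=0}=v_{xxx}(0,y)$ and $W=0$ on the other three sides, giving $W\in H^2$ with $\|W\|_{H^2}\lesssim |v_{xxx}(0,\cdot)|_{H^{1/2}([0,2])}$, then define $v_0$ by $\Delta v_0=W$ with compatible Neumann/Dirichlet data so that $\partial_{xxx}v_0(0,y)=v_{xxx}(0,y)$, $\partial_xv_0|_{x=0}=0$, $\partial_yv_0|_{y=0,2}=0$, $v_0|_{x=L}=0$. A cutoff localized near $x=0$ gives $\bar v:=v-v_0\chi(4x/L)$, which satisfies a \emph{pure Dirichlet-type} biharmonic problem: $\bar v_x=\bar v_{xxx}=0$ on $x=0$, $\bar v=\bar v_{xx}=0$ on $x=L$, and $\bar v_y=0$, $\bar v=-v_0\chi$ on $\{y=0,2\}$. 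An even extension of $\bar v$ across $x=0$ (allowed because of the vanishing of $\bar v_x$ and $\bar v_{xxx}$ there) combined with the treatment of $x=L$ used in Lemma \ref{lem1} lands the problem in a smooth domain where the standard $H^4$ Dirichlet theory for the biharmonic operator yields
\begin{equation*}
\|\bar v\|_{H^4}\lesssim |v_{xxx}(0,\cdot)|_{H^{1/2}} + \|F\|_{L^2},
\end{equation*}
with $F=\va^{-1}G - \Delta^2(v_0\chi)$.

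From here the bookkeeping is straightforward. Multiplying through by $\va^{5/2}$ and plugging in the expressions for $v_{xxx}(0,\cdot)$ and $G$ gives
\begin{equation*}
\va^{5/2}\|v\|_{H^4}\lesssim \va^{3/2}\|\curl(\mathbf{N}+\mathbf{F})\|_{H^1} + \va^{3/2}\|v\|_{H^3} + \va\|v\|_{L^2} + \va^{5/2}\|u_{yy}\|_{H^1}.
\end{equation*}
The term $\va^{5/2}\|u_{yy}\|_{H^1}$ and the $L^2$ term are already controlled by $A_2$ and $\|\mathbf{u}\|_{\mathcal{X}}$. The dangerous looking $\va^{3/2}\|v\|_{H^3}$ is absorbed by Sobolev interpolation $\|v\|_{H^3}\le \delta\va\|v\|_{H^4}+C(\delta)\va^{-1}\|v\|_{H^2}$, after which $\|v\|_{H^2}$ feeds back into $A_2$ via $\|\mathbf{u}\|_{\mathcal{X}}$. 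Finally the nonlinear piece is estimated in $H^1$: expanding $\curl\mathbf{N}$ and using the $L^\infty$ bounds from Lemma \ref{lem.linear.1} yields $\va^{3/2}\|\curl\mathbf{N}\|_{H^1}\lesssim \va^{7/8+\gamma/2}\|\bar{\mathbf{u}}\|_{\mathcal{X}}^2$, while $\va^{3/2}\|\curl\mathbf{F}\|_{H^1}\le \va^{3/2}\|\mathbf{F}\|_{H^2}$ directly, and \eqref{fourth.7} follows.

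The main obstacle, and where I would spend the most care, is the treatment of the nonhomogeneous trace $v_{xxx}|_{x=0}$: turning it into a well-posed Dirichlet biharmonic problem requires both the right auxiliary construction $(W,v_0)$ and verifying the compatibility at the corners so that the even extension across $x=0$ does not lose regularity; this is what allows the $H^{1/2}$ boundary norm of $v_{xxx}(0,\cdot)$ to be traded for the interior estimates already obtained in Lemma \ref{lem3.1} and Lemma \ref{lem3.2}, closing the loop with the bootstrap scheme.
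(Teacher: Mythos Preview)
Your proposal is correct and follows essentially the same approach as the paper's own proof: the biharmonic equation $\va\Delta^2 v=G$, the reading-off of $v_{xxx}|_{x=0}$ from the curl equation with corner compatibility, the two-step auxiliary construction $(W,v_0)$ to homogenize that trace, the cutoff $\bar v=v-v_0\chi$, the even extension across $x=0$ followed by the Dirichlet biharmonic estimate of Lemma~\ref{lem1}, and the final bookkeeping via the interpolation $\|v\|_{H^3}\le \delta\va\|v\|_{H^4}+C(\delta)\va^{-1}\|v\|_{H^2}$ together with the $H^1$ bound on $\curl\mathbf{N}$ are all exactly what the paper does.
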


\noindent $\mathbf{Proof\  of \ Theorem}$ \ref{th.linear1}:
First by Lemmas \ref{lem.2}-\ref{lem3.1} and the fact that $L$ is sufficiently small, we find
\begin{eqnarray}
A_1^2+A_2^2\lesssim&& L^{\f14} A_3^2+\va^{\f\gamma2} \|\mau\|_{\mathcal{X}}\|\bar{\mathbf{u}}\|_{\mathcal{X}}^2
+(\curl{ \mathbf{F}},q_x)+\| \mathbf{F}\|_{H^1}^2,
\end{eqnarray}
which, together with Lemma \ref{lem3.2}, implies
\begin{eqnarray}
A_1^2+A_2^2\lesssim \va^{\f\gamma2} \|\mau\|_{\mathcal{X}}\|\bar{\mathbf{u}}\|_{\mathcal{X}}^2+(\curl{ \mathbf{F}},q_x)+\| \mathbf{F}\|_{H^1}^2,\label{3.2.1}
\end{eqnarray}

Next, combing (\ref{fourth.3}) with (\ref{fourth.7}), we see that
\begin{equation}
\va^{\f32}\|v\|_{H^3}\lesssim A_2+\va\|\mau\|_{\mathcal{X}}+\va^{\f78+\f\gamma2}\|\bar{\mau}\|_{\mathcal{X}}^2+\va^{\f32}\| \mathbf{F}\|_{H^2}.\label{fourth.6}
\end{equation}
Finally, using equation (\ref{curl1}), we find
\begin{eqnarray}
\va^{\f32}\|u_{yyy}\|\lesssim &&\va^{\f12}\|v\|_{H^2}+\va^{\f32}\|\nabla^2v_x\|_{L^2}+\va^{\f12}\|\curl{\mathbf{N}}\|_{L^2}+\va^{\f12}\| \mathbf{F}\|_{H^1}\nonumber\\
\lesssim&&A_2+\va\|\mau\|_{\mathcal{X}}+\va^{\f38}\|\bar{\mau}\|_{\mathcal{X}}^2+\va^{\f12}\| \mathbf{F}\|_{H^1},
\end{eqnarray}
which together with (\ref{fourth.6}) gives
\begin{eqnarray}\va^{\f32}\|\mathbf{u}\|_{H^3}\leq A_2+\va\|\mau\|_{\mathcal{X}}+\va^{\f38}\|\bar{\mau}\|_{\mathcal{X}}^2+\va^{\f12}\| \mathbf{F}\|_{H^1}+\va^{\f32}\| \mathbf{F}\|_{H^2}. \label{fourth.8}
\end{eqnarray}

Now, if we put (\ref{3.2.1}) and (\ref{fourth.8}) together, we conclude
\begin{eqnarray*}
\|\mau\|_{\mathcal{X}}^2\lesssim\va^{\gamma}\|\bar{\mathbf{u}}\|_{\mathcal{X}}^4+(\curl{ \mathbf{F}},q_x)+\| \mathbf{F}\|_{H^1}^2+\va^{3}\| \mathbf{F}\|_{H^2}^2.
\end{eqnarray*}
This completes the proof of Theorem \ref{th.linear1}.

\renewcommand{\theequation}{\thesection.\arabic{equation}}
\setcounter{equation}{0}
\section{Proof of the main theorems}

 Based on the uniform-in-$\va$ estimates for the linearized system as well as the contract mapping principle,
 we are now ready to prove our main theorems. Obviously, the main difference between the proofs of the main theorems
 lies in the term $(\curl{ \mathbf{F}},q_x)$.
 \bigskip

\noindent $\mathbf{Proof\ of\ the\  main\  theorems:}$

Step I: Estimates of the term $(\curl{ \mathbf{F}},q_x)$

(i) In the Poiseuille-Couette flow case with $\alpha_2>0$ in the absence of external forces, we take
$$ \mau_s=\mau^0, \quad P_s=\va U''(y) x=-2\va\alpha_2x, $$
where $U$ is defined in (\ref{0.2}).
Then, by (\ref{remain1}) and (\ref{remain2}),
\begin{eqnarray*}
&&\mcf_u=-\va^{-\f{11}8-\gamma}(-\va\mu''+\partial_xP_s)=\va^{-\f{11}8-\gamma}\va(\mu''-U''),\\
&&\mcf_v=0.
\end{eqnarray*}
 Thus, from the condition (\ref{0.13}) in Theorem \ref{main.1} we get
 \begin{eqnarray}
 &&|(\curl{ \mathbf{F}},q_x)|=|(\curl{\mcf},q_x)|\nonumber\\
 =&&|\va^{-\f38-\gamma}\int_\Omega\mu'''(y)q_xdxdy|= |\va^{-\f38-\gamma}\int_0^2\mu'''q(0,y)dy|\nonumber\\
 \leq &&C\va^{-\f38-\gamma}|\mu'''|_{L^\infty}(|u_sq_y(0,\cdot)|_{L^2}+\|\sqrt{u_s}v_x\|_{L^2}), \nonumber\\
 \leq&&C\alpha_0\|\mau\|_{\mathcal{X}},\label{4.1}
 \end{eqnarray}
where the constant $C$ does not depend on $\va$.

(ii) In the case when $\alpha_2=0$ in the absence of external forces, i.e., the flow is of Couette type,
$(\mau_s,P_s)$ is defined in (\ref{asy:1:1:1}) and $(\mathcal{F}_u,\mathcal{F}_v)$ can be written as (\ref{2.2.6}) and (\ref{2.2.7}).
Therefore, by virtue of (\ref{2.2.8}) and (\ref{linear3.2.2}), one has
\begin{eqnarray}
 &&|(\curl{ \mathbf{F}},q_x)|=|(\curl{\mcf},q_x)|\nonumber\\
 \leq &&\|\curl{\mcf}\|\|q_x\|\leq C\va^{-\f{11}8-\gamma}\va^{2-\f38}\alpha_0\|\mau\|_{\mathcal{X}}=C\va^{\f14-\gamma}\alpha_0\|\mau\|_{\mathcal{X}}, \label{4.2}
\end{eqnarray}
with the constant $C$ being independent of $\va$.

(iii) In the case when there is a proper external force $\mathbf{f}^\va=(f^\va_1,f^\va_2)$ to control the flow, we take
$$\mau_s=\mau^0,\quad P_s=C,$$
where $C$ is any constant. By (\ref{remain1}) and (\ref{remain2}) we have
$$F_1=\va^{-M_0}f^\va_1+\mcf_u=\va^{-M_0} (\va\mu''-f^\va_1)= \va^{-M_0}g_1^\va;$$
$$ F_2=\va^{-M_0}f^\va_2+\mcf_v=\va^{-M_0} f^\va_2=\va^{-M_0} g_2^\va.$$
Then, from (\ref{0.12}) in Theorem \ref{main.3} we get
\begin{eqnarray}
|(\curl{\mathbf{F}},q_x)|=&&\va^{-\f{11}8-\gamma}|(\curl{\mathbf{g}^\va},q_x)|=\va^{-\f{11}8-\gamma}|\int_0^2\curl{\mathbf{g}^\va}q|_{x=0}dy| \nonumber\\
\leq &&\va^{-\f{11}8-\gamma}\|\mathbf{g}^\va\|_{H^2}(|u_sq_y(0,\cdot)|_{L^2}+\|\sqrt{u_s}v_x\|_{L^2})\nonumber\\
\leq &&C\alpha_0\|\mau\|_{\mathcal{X}},\label{4.3}
\end{eqnarray}
where the constant $C$ does not depend on $\va$.
\vspace{1mm}

Step II: Solutions to the nonlinear systems.

First of all, let $(\mau_s,P_s)$ be one of the cases in Step I,
 then we see that
\begin{equation}
\|\mathcal{\mathbf{F}}\|_{H_1}^2+\va^3\|\mathcal{\mathbf{F}}\|_{H_2}^2+|(\curl{\mathbf{F}},q_x)|\leq C(\alpha_0^2+\alpha_0\|\mau\|_{\mathcal{X}})\label{4.4}\end{equation}
with the constant $C$ being independent of $\va$. So we have from  Theorem \ref{th.linear1}
\begin{equation}
\|\mau\|_{\mathcal{X}}^2\leq C(\va^{\gamma }\|\bar{\mau}\|_{\mathcal{X}}^4+\alpha_0^2).\label{4.0}\end{equation}
In view of Theorems \ref{th.linear2} and (\ref{4.0}), we see that
for any $\bar{\mau}=(\bar u,\bar v)\in \mathcal{X}$, there is a unique solution $(\mau,P)$ satisfying (\ref{linear1})-(\ref{boundary2})
with the following estimate:
\begin{equation}
\|\mau\|_{\mathcal{X}}\leq C(\va^{\f\gamma 2}\|\bar{\mau}\|_{\mathcal{X}}^2+\alpha_0).\label{4.5}\end{equation}

Now, for $\alpha_0$ small enough, we define
$$V=\{\mau\in\mathcal{X}|\|\mau\|_{\mathcal{X}}<2C\alpha_0\}.$$
Then we can define a mapping $T:V\rightarrow V$ by $T(\bar\mau)=\mau$.

Now, for any $\bar\mau_1,\bar\mau_2\in V$, we denote $\mau_1=(u_1,v_1)=T(\bar\mau_1)$ and $\mau_2=(u_2,v_2)=T(\bar\mau_2)$.
Then, $\mau_1-\mau_2$ satisfies the following system:
\begin{eqnarray}
&&-u_s \Delta (v_1-v_2)+\Delta u_s(v_1-v_2)+S(u_1-u_2,v_1-v_2)-\va\Delta\curl (\mau_1-\mau_2)\nonumber\\
=&&\curl \mathbf{N}(\bar u_1,\bar v_1)-\curl \mathbf{N}(\bar u_2,\bar v_2)\nonumber\\
=&&\partial_y[(\bar{v}_1-\bar{v}_2)\bar{u}_{1y}+(\bar{u}_1-\bar{u}_2)\bar{u}_{1x}]-\partial_x[(\bar{v}_1-\bar{v}_2)\bar{v}_{1y}+(\bar{u}_1-\bar{u}_2)\bar{v}_{1x}]\nonumber\\
&&+\partial_y[\bar v_2(\bar u_1-\bar u_2)_y+\bar u_2(\bar u_1-\bar u_2)_x]-\partial_x[v_2(\bar v_1-\bar v_2)_y+\bar u_2(\bar v_1-\bar v_2)_x].\label{4.6}
\end{eqnarray}
If we further denote $\bar q=\f{v_1-v_2}{u_s}$, and
\begin{eqnarray}
B_1^2&=&\|\sqrt{u_s} (v_1-v_2)_{y}\|^2+\|\sqrt{u_s} (v_1-v_2)_{x}\|^2,\nonumber\\
B_2^2&=&\va \|\sqrt{u_s}\bar{q}_{xx}\|^2+\va\|\sqrt{u_s}\bar{q}_{xy}\|^2+\va\|\sqrt{u_s}\bar{q}_{yy}\|^2+|u_s\bar q_y(0,\cdot)|^2 , \nonumber
\end{eqnarray}
then we have in a manner similar to (\ref{3.2.1}) that
\begin{equation}
B_1^2+B_2^2\leq \va^{\f\gamma2}\|\mau_1-\mau_2\|_{\mathcal{X}}\|
\bar \mau_1-\bar \mau_2\|_{\mathcal{X}}(\|\bar{\mau}_1\|_{\mathcal{X}}+\|\bar\mau_2\|_{\mathcal{X}}).\label{4.7}\end{equation}
Similarly to the proof of Lemma \ref{lem3.3}, we obtain
\begin{equation}
\va^{\f52}\|v_1-v_2\|_{H^4}\lesssim B_2+\va\|\mau_1-\mau_2\|_{\mathcal{X}}+\va^{\f78}\|\bar\mau_1-\bar\mau_2\|_{\mathcal{X}}(\|\bar{\mau}_1\|_{\mathcal{X}}
+\|\bar\mau_2\|_{\mathcal{X}}).\label{4.8}
\end{equation}
Combing (\ref{4.7}) with (\ref{4.8}), we argue similarly to the proof of Theorem \ref{th.linear1} to deduce
$$\|\mau_1-\mau_2\|_{\mathcal{X}}\lesssim \va^{\f\gamma2}\|\bar \mau_1-\bar \mau_2\|_{\mathcal{X}}(\|\bar{\mau}_1\|_{\mathcal{X}}+\|\bar\mau_2\|_{\mathcal{X}}).$$
Thus, $T$ is a compact mapping if $\alpha_0$ is taken to be small enough. We finish the proof of the main theorems.

\renewcommand{\theequation}{\thesection.\arabic{equation}}
\setcounter{equation}{0}
\appendix
\section{Sketch of the construction of $\mau_p^{i,-}$}

In this section, we sketch the construction of $\mau_p^{i,-}$, and refer to \cite{IZ1} for the details.

As the equations for the weak boundary layer correctors are linear parabolic equations including  degenerate terms $\mu(y)\partial_xu_p^{i,-}$
near $y=0$, we would have to use the scale $Y_-=\va^{-\f13}y$ near $y=0$.
 Recalling that $\mu(y)\sim y$ as $y\rightarrow0$, and that the boundary layer correctors would degenerate rapidly when $Y_->1$, we first collect from (\ref{remain1}) the leading   $\bigO(\eps^{\frac{4}{3}})$ terms for the boundary layer profiles near $y=0$:
\begin{eqnarray}
R^{1,-}_u=\eps^{-\f13}\mu\partial_xu_p^{1,-}+v_p^{1,-}\mu'+\eps^{-\f13}\partial_xP_p^{1,-}-\partial_{Y_-Y_-}u_p^{1,-},\end{eqnarray}
and the leading $\bigO(\eps^{\frac{2}{3}})$ terms for (\ref{remain2}) is
\begin{equation}
\partial_{Y_-}P_p^{1,-}=0.
\end{equation}

Then to construct $\mau_p^{1,-}$, we first consider the following system (using the fact that $\mu(y)\sim y$ when $y\rightarrow0$):
\begin{eqnarray}\begin{cases}
Y_-\partial_xu_p^{1,0,-}+v_p^{1,0,-}+\eps^{-\f13}\partial_xP_p^{1,0,-}-\partial_{Y_-Y_-}u_p^{1,0,-}=0,\ \ \partial_{Y_-}P_p^{1,0,-}=0,\\
u_p^{1,0,-}|_{x=0}=0,\ u_p^{1,0,-}|_{Y_-=0}=-u_e^1(x,0),\ u_p^{1,0,-}|_{Y_-\rightarrow\infty}=0,\\
v_p^{1,0,-}=\int_{Y_-}^\infty \partial_xu_p^{1,0,-}.
\end{cases}\end{eqnarray}
It is easy to obtain $P_p^{1,-}\equiv0$, and thus,
\begin{eqnarray}\begin{cases} \label{eq:PR:1}
Y_-\partial_xu_p^{1,0,-}+v_p^{1,0,-}-\partial_{Y_-Y_-}u_p^{1,0,-}=0,\\
u_p^{1,0,-}|_{x=0}=0,\ u_p^{1,0,-}|_{Y_-=0}=-u_e^1(x,0),\ u_p^{1,0,-}|_{Y_-\rightarrow\infty}=0,\\
v_p^{1,0,-}=\int_{Y_-}^\infty \partial_xu_p^{1,0,-}.
\end{cases}\end{eqnarray}

 Note that we construct $u^{1,0,-}_p, v^{1,0,-}_p$ on $(0,L) \times (0,\infty)$. We now cut-off these layers and
 make an $O(\va^{\f13})$-order error:
\begin{align} \label{cut.off.1}
u^{1,-}_p = \chi(\f{\eps^{\f13}Y_-}{a_0}) u^{1,0, -}_p - \frac{\eps^{\f13}}{a_0} \chi'(\f{\eps^{\f13}Y_-}{a_0}) \int_0^x v^{1,0,-}_p,
\;\; v^{1,-}_p := \chi(\f{\eps^{\f13}Y_-}{a_0}) v^{1,0, -}_p
\end{align}
where $a_0>0$ is a fixed constant small enough, and $\chi$ is defined in (\ref{cutoff}).

When $i=2$, let $u_p^{1,0,-},\ v_p^{1,0,-}$ be solutions to the system (\ref{eq:PR:1}), then after cutting off (\ref{cut.off.1}),
the contribution to the next layer is
\begin{eqnarray}
\mathcal{C}_{cut}^{1,-}=\f1{a_0}\eps^{\f13}Y_-\chi'v_p^{1,0,-}+3\f1{a_0}\eps^{\f13}\chi'\partial_{Y_-}u_p^{1,0,-}
+3\f1{a_0^2}\eps^{\f23}\chi''u_p^{1,0,-}+\f1{a_0^3}\eps\chi'''\int_0^xv_p^{1,0,-}.\nonumber\\
\end{eqnarray}
We also obtain another error, due to approximating $\eps^{- \frac 1 3} \mu$ by $Y_-$ in the support of the cut-off function $\chi(\frac{\eps^{\frac 1 3} Y}{a_0})$ and by approximating $\mu'$ by $1$. This error is given by
\begin{align}
\mathcal{C}_{approx}^{1,-} := &  (\eps^{- \frac 1 3} \mu(y) - Y_-)(\chi(\frac{y}{a_0}) \p_x u^{1,0,-}_p+\f1{a_0}\eps^{\f13}\chi'u_p^{1,0,-}) + (\mu' - 1) \chi(\frac{y}{a_0}) v^{1,0,-}_p
\end{align}
Finally, we have higher order terms that contribute to the error:
\begin{eqnarray}
\mathcal{C}^{1,-}_{quad} := &&\eps^2 (u^1_e + u^{1,-}_p) \p_x u^{1,-}_p + \eps^2 u^{1,-}_p \p_x u^1_e + \eps^{\frac 53} v^1_e u^{1,-}_{pY_-} + \eps^{\frac 73} v^{1,-}_p u^1_{ey}\nonumber\\
&& + \eps^2 v^1_p u^{1,-}_{pY_-} - \eps^2 u^{1,-}_{pxx}.
\end{eqnarray}
For the higher order terms in the second equation, we shall use our auxiliary pressure to move it to the top equation.
This is achieved by defining the first auxiliary pressure $P^{1,a,-}_p$ to  zero out the terms contributed from
\begin{align} \n
&\eps^{\frac 43} (\mu + \eps u^1_e) v^{1,-}_{px} + \eps u^{1,-}_p (\eps v^1_{ex} + \eps^{\frac 4 3} v^{1,-}_{px})
+ \eps^2 v^1_e v^{1,-}_{pY_-} + \eps^{\frac 73} v^1_p v^{1,-}_{pY_-} + \eps^{\frac 73} v^{1,-}_p v^1_{ey} \\
&- \eps^{\frac 53} v^{1,-}_{pY_-Y_-} - \eps^{\frac 73} v^{1,-}_{pxx} + \eps^{\frac 4 3} P^{1,a,-}_{pY_-} =0,
\end{align}
which therefore motivates our following definition:
\begin{align}  \n
- \eps^{\frac 4 3} P^{1,a,-}_{P} := & \int_{Y_-}^{\infty} \Big( \eps^{\frac 43} (\mu + \eps u^1_e) v^{1,-}_{px} + \eps u^{1,-}_p (\eps v^1_{ex} + \eps^{\frac 4 3} v^{1,-}_{px}) + \eps v^1_e v^{1,-}_{pY_-} + \eps^{\frac 73} v^{1,-}_p v^{1,-}_{pY_-}  \\
&+ \eps^{\frac 73} v^{1,-}_p v^1_{ey} - \eps^{\frac 53} v^{1,-}_{pY_-Y_-} - \eps^{\frac 73} v^{1,-}_{pxx} \Big) \ud Y'.
\end{align}
As a result, we can define the force for the next order weak boundary layer via
\begin{align}
F^{2,-} := \eps^{- \frac 5 3} \Big( -\eps^{\frac 4 3} \mathcal{C}^{1,-}_{cut} +  \eps^{\frac 4 3} \mathcal{C}^{1,-}_{approx} + \mathcal{C}^{1,-}_{quad} - \eps^{\frac 5 3} \p_x P^{1,a,-}_P \Big).
\end{align}

\begin{remark} For $i=3,...,M$, $F^{i,-}$ can be constructed similarly.
Since we shall put the terms with $H^2$-norm smaller than $\va^{\f74+\gamma}$ into the remainders $(\mcf_u,\mcf_v)$,
we only need the auxiliary pressure $P_p^{i,a,-}$ for $i=1,...,5$. When $i>5$, we shall take $P_p^{i,a,+}=0$.
We remark here that all the interaction terms with form $u_e^{i,+}u_p^{j,-}$ will be put into the remainders $(\mcf_u,\mcf_v)$.
\end{remark}

Let us therefore consider the abstract problem (dropping indices):

\begin{eqnarray}
&&Y \p_x u + v - \p_{Y}^2 u = F, \qquad (x, Y) \in (0, L) \times (0, \infty) \label{A.1}\\
&&v := \int_Y^\infty \p_x u \ud Y',\label{A.3} \\
&&u|_{x = 0} = 0, \qquad u|_{Y = 0} = g(x), \qquad u|_{Y \rightarrow \infty} = 0.\label{A.4}
\end{eqnarray}
For this abstract problem, we have the following estimates:
\begin{lemma} \label{A.lemma1} Assume that $F(x,Y)$ decays rapidly at infinity, i.e., for any $m\geq0$, there is a constant $M>0$,
such that
\begin{equation*}
\|(1+Y)^m \p_x^n \p_Y^{l}F \| \leq M  \ \text{ for } 0 \le 2n+l \le K,
\end{equation*}
where $K>0$ is a sufficiently large constant. Then, there exists a unique solution $(u, v)$ to \eqref{A.1}--\eqref{A.4} satisfying
\begin{equation*}
\|(1+Y)^m \partial_x^n \partial_Y^{l} \{ u, v \}\| \leq C(m,n,l)(M+\|g\|_{H^{K+1}})\ \text{ for any } 2n + l \leq K+2,
\end{equation*}
where the constant $C$ does not depend on $Y$.
\end{lemma}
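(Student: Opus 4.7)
\textbf{Proof proposal for Lemma \ref{A.lemma1}.}
The plan is to mirror the argument of Lemma \ref{A.lemma3}, with adaptations to handle the degenerate coefficient $Y$ in front of $u_x$ and the nonlocal coupling through $v$. First I would homogenize the Dirichlet data: pick $\chi\in C_c^\infty([0,\infty))$ with $\chi(0)=1$ and $\int_0^\infty\chi(Y)\,dY=0$, and set $\tilde u=u-\chi(Y)g(x)$. The zero-mean condition ensures that $\tilde v:=\int_Y^\infty \tilde u_x\,dY'$ differs from $v$ only by $g'(x)\int_Y^\infty \chi\,dY'$, which has compact support in $Y$. The equation for $\tilde u$ then reads $Y\tilde u_x+\tilde v-\tilde u_{YY}=\tilde F$ with $\tilde u|_{Y=0}=0$ and a modified forcing that still decays rapidly and satisfies $\|(1+Y)^m\partial_x^n\partial_Y^l\tilde F\|\lesssim M+\|g\|_{H^{K+1}}$. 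In what follows I drop the tildes.

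For the basic energy estimate I multiply the equation by $u_x$ and integrate in $Y$. The convection term gives $\int_0^\infty Y u_x^2\,dY\ge 0$. Using $v_Y=-u_x$, the nonlocal term becomes
\[
\int_0^\infty v u_x\,dY=-\tfrac12\int_0^\infty (v^2)_Y\,dY=\tfrac12 v(x,0)^2\ge 0.
\]
The viscous term, after integrating by parts in $Y$ and using $u|_{Y=0}=0$ together with decay at infinity, produces $\tfrac12\partial_x\int_0^\infty u_Y^2\,dY$. Integrating in $x\in(0,L)$ and using the initial condition $u|_{x=0}=0$, one obtains
\[
\int_\Omega Y u_x^2\,dYdx+\tfrac12\int_0^L v(x,0)^2\,dx+\tfrac12\sup_{x\in[0,L]}\int_0^\infty u_Y^2\,dY\le C\|F\|_{L^2}^2.
\]
The pure $L^2$-bound $\|u\|_{L^2(\Omega)}$ is then recovered from $u=\int_0^Y u_Y\,ds$ and Cauchy--Schwarz in $Y$, while $\|v\|_{L^2}$ follows from $v_Y=-u_x$ together with decay at infinity. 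Weighted versions follow by replacing $u_x$ by $(1+Y)^{2m}u_x$: the weight produces a favorable extra nonnegative term $m\int(1+Y)^{2m-1}v^2$ from the nonlocal part, and the commutator $2m\int(1+Y)^{2m-1}u_xu_Y$ from the viscous part is absorbed by Young's inequality into $\int Y u_x^2$ and lower-order norms already under control.

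To obtain higher regularity, I differentiate the equation $n$ times in $x$ and repeat the $u_x$-multiplier argument on $\partial_x^n u$; the required Cauchy datum $\partial_x^n u|_{x=0}$ is obtained inductively from the equation itself using the relation $Yw(Y)+\int_Y^\infty w(Y')\,dY'=\Phi(Y)$ with $\Phi$ built from $F$ and lower-order $x$-derivatives at $x=0$ (differentiating this relation in $Y$ yields $Yw'=\Phi'$, so $w$ is explicitly determined provided $F$ and $g$ decay sufficiently fast, which is exactly our hypothesis). The $Y$-derivatives are then recovered from the equation written as $u_{YY}=Yu_x+v-F$, which, after $\partial_x^n\partial_Y^{2l-2}$ applied and rearranged, gives the full range $2n+l\le K+2$. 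The estimates for $v$ follow automatically from $v_Y=-u_x$ and iteration. The main technical obstacle is the interaction between the degeneracy at $Y=0$ and the nonlocal term $v$: one must verify that the multiplier $u_x$ simultaneously controls both, and that at $x=0$ the integral relation determining $\partial_x^n u|_{x=0}$ is solvable compatibly near $Y=0$ thanks to the rapid decay and smoothness hypotheses on $F$ and $g$; once these points are dispatched the remaining arguments are the direct analogues of those already carried out in Lemma \ref{A.lemma3}.
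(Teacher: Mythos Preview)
The paper itself does not prove this lemma: the appendix only states it and refers to \cite{IZ1}, giving a full proof only for the non-degenerate analogue Lemma~\ref{A.lemma3}. Your plan to transport that argument is the natural one, and the identity $\int vu_x=-\int vv_Y=\tfrac12 v(x,0)^2\ge 0$ is exactly what makes the multiplier $u_x$ compatible with the nonlocal term; together with the favorable extra contribution $m\int(1+Y)^{2m-1}v^2$ this is the heart of the matter.

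There is, however, a genuine gap in your treatment of the weighted commutator. With weight $(1+Y)^{2m}$ the viscous commutator is $2m\int(1+Y)^{2m-1}u_xu_Y$, and your claim that it ``is absorbed by Young's inequality into $\int Yu_x^2$ and lower-order norms'' fails near $Y=0$: any Young splitting that places a factor $Y$ on $u_x^2$ forces a factor $Y^{-1}$ on $u_Y^2$, and since $u_Y(x,0)$ need not vanish, $\int_0^1 Y^{-1}u_Y^2\,dY$ is uncontrolled. A clean remedy is to use instead $\langle Y\rangle^{2m}=(1+Y^2)^m$; then $\partial_Y\langle Y\rangle^{2m}=2mY\langle Y\rangle^{2m-2}$ already carries a factor $Y$, the commutator becomes $2m\int Y\langle Y\rangle^{2m-2}u_xu_Y$, and Young gives $\epsilon\|Y^{1/2}\langle Y\rangle^{m}u_x\|^2+C_\epsilon\|Y^{1/2}\langle Y\rangle^{m-2}u_Y\|^2$, both harmless. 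A secondary issue is the order of your Steps~3 and~4: on the half-line $(0,\infty)$, $u=\int_0^Yu_Y$ with Cauchy--Schwarz does \emph{not} yield $\|u\|_{L^2}$ (the pointwise bound $|u|\le Y^{1/2}\|u_Y\|_{L^2_Y}$ is not square-integrable in $Y$), and the dual Hardy inequality gives only $\|v\|\lesssim\|Yu_x\|$, not $\|Y^{1/2}u_x\|$. The unweighted $\|v\|$ should instead be read off the good term in the $m=1$ weighted estimate, after which $\|u\|$ follows from $u=-\int_Y^\infty u_Y$ and the weighted control of $u_Y$; in other words, the weighted step has to come first.
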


For $i = M$, we need to slightly modify the abstract problem. Consider
\begin{align} \nonumber
&Y \p_x u + v - \p_{Y}^2 u = F, \qquad (x, Y) \in (0, L) \times (0, \infty) \\ \label{final.2}
&v := - \int_0^Y \p_x u \ud Y', \\ \nonumber
&u|_{x = 0} = 0, \qquad u|_{Y = 0} = g(x), \qquad \partial_Yu|_{Y \rightarrow \infty} = 0,
\end{align}
and we have
\begin{lemma}\label{A.lemma2}
Assume  that $F(x,Y)$ decays rapidly at infinity, i.e., for any $m\geq0$, there is a constant $M>0$,
such that
\begin{align}
\|(1+Y)^m \p_x^n \p_Y^{l}F \| \leq M \text{ for } 0 \le 2n+ l \le K ,
\end{align}
then there exists a unique solutions $(u, v)$ of (\ref{final.2}), satisfying
\begin{align*}
&\|(1+Y)^m \p_x^n \p_Y^{l} u\| \le  C(m,n,l)(M+\|g\|_{H^{K+1}}) \text{ for any }  l \ge 1, 1 \le 2n + l \le K+2, \\
&\|\p_x^n \{ u, \frac{v}{Y} \}\| \le  C(m,n,l)(M+\|g\|_{H^{K+1}}) \text{ for any } 0 \le 2n \le K+2.
\end{align*}
\end{lemma}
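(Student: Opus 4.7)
I will follow the template of Lemma~\ref{A.lemma1} closely, since the PDE in Lemma~\ref{A.lemma2} is the same and only the boundary data differ. These differences are precisely what produces the weaker conclusions (no weights on $u$ itself, and $v$ controlled only through $v/Y$). First I homogenise by setting $\tilde u:=u-\chi(Y)g(x)$ for a smooth compactly supported $\chi$ equal to $1$ near $Y=0$. The new unknown satisfies $\tilde u|_{Y=0}=\tilde u|_{x=0}=0$, $\p_Y\tilde u|_{Y\to\infty}=0$, and the same PDE with a modified forcing $\tilde F$ enjoying the same weighted bounds as $F$, up to a constant multiple of $\|g\|_{H^{K+1}}$. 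The companion $\tilde v:=-\int_0^Y\p_x\tilde u\,\ud Y'$ satisfies $\tilde v|_{Y=0}=0$ and $\p_Y\tilde v=-\p_x\tilde u$.

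The naive multiplier $\tilde u_x$ applied to the homogenised equation produces
\begin{equation*}
\tfrac12\p_x\!\int_0^\infty\tilde u_Y^{\,2}\,\ud Y+\int_0^\infty Y\tilde u_x^{\,2}\,\ud Y-\tfrac12\tilde v(x,\infty)^2=\int_0^\infty\tilde F\tilde u_x\,\ud Y,
\end{equation*}
since $\int\tilde v\tilde u_x=-\int\tilde v\,\p_Y\tilde v=-\tfrac12\tilde v(x,\infty)^2$ by $\tilde v|_{Y=0}=0$. The unfavourable sign of the final term on the right is the structural obstruction: in Lemma~\ref{A.lemma1} the analogous identity yields $+\tfrac12\tilde v(x,0)^2$ because there $\tilde v$ vanishes at $Y=\infty$. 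To bypass this, I differentiate the PDE once in $Y$ and cancel $\tilde u_x$ using $\p_Y\tilde v=-\p_x\tilde u$, obtaining the clean degenerate parabolic equation
\begin{equation*}
Y\p_x w-\p_Y^2 w=\p_Y\tilde F,\qquad w:=\p_Y\tilde u,\quad w|_{x=0}=0,\quad w|_{Y\to\infty}=0,
\end{equation*}
with no $v$-term at all, and for which no boundary condition at $Y=0$ is required because the coefficient of $\p_x w$ degenerates there.

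For this auxiliary equation the multiplier scheme of Lemma~\ref{A.lemma1} applies verbatim: multiplying successively by $w_x$, then $w_x(1+Y)^{2m}$, and then $\p_x^n\p_Y^{2l}w_x\cdot(1+Y)^{2m}$, one obtains by induction in $(n,l,m)$ the weighted bounds
\begin{equation*}
\|(1+Y)^m\p_x^n\p_Y^{l}\tilde u\|=\|(1+Y)^m\p_x^n\p_Y^{l-1}w\|\lesssim M+\|g\|_{H^{K+1}},\quad l\ge 1,\ 2n+l\le K+2.
\end{equation*}
Initial traces $\p_x^n w|_{x=0}$ used in the induction are read off algebraically from $\p_x^{n-1}\tilde F(0,\cdot)$ by applying the equation (an ODE in $Y$) at $x=0$, exactly as in Lemma~\ref{A.lemma3}. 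For derivatives at $Y=0$ that appear in the integration-by-parts boundary terms, the $Y$-factor in $Y\p_x w$ absorbs the indefinite sign.

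To recover the unweighted bounds on $\tilde u$ and $\tilde v/Y$, I use $\tilde u(x,Y)=\int_0^Y w(x,Y')\,\ud Y'$ together with Cauchy--Schwarz against a sufficiently heavy weight on $w$, yielding $\|\p_x^n\tilde u\|_{L^2}\lesssim M+\|g\|_{H^{K+1}}$ for $2n\le K+2$. Hardy's inequality, permitted by $\tilde v|_{Y=0}=0$, then gives
\begin{equation*}
\|\p_x^n(v/Y)\|_{L^2_Y}\le 2\|\p_x^n\,\p_Y v\|_{L^2_Y}=2\|\p_x^{n+1}\tilde u\|_{L^2_Y},
\end{equation*}
which is already controlled. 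Restoring $u=\tilde u+\chi g$ closes the argument. The hardest step is the reduction in the second paragraph---recognising that differentiating once in $Y$ kills the $v$-term and sends the problem back into the framework of Lemma~\ref{A.lemma1}; once this is in hand the remaining work is a routine adaptation of the arguments already used for Lemmas~\ref{A.lemma1} and~\ref{A.lemma3}. The intrinsic cost of this manoeuvre is the loss of one power of $Y$-decay when reconstructing $\tilde u$ from $w$ by integration, which is precisely why the statement offers no weighted $L^2$ estimate on $\tilde u$ itself.
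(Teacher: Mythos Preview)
The paper does not give a detailed proof of this lemma; it refers to \cite{IZ1} and remarks only that, compared with Lemma~\ref{A.lemma1}, the difference is that $v$ no longer decays as $Y\to\infty$. Your $Y$-differentiation trick --- passing to $w=\tilde u_Y$, for which the offending $v$-term disappears --- is a clean idea and is not what the paper sketches, so it is worth recording. However, as written the argument has a genuine gap and a smaller imprecision.

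\textbf{The gap.} Your recovery of $\|\p_x^n\tilde u\|_{L^2}$ from weighted bounds on $w$ does not work. Writing $\tilde u(x,Y)=\int_0^Y w(x,Y')\,\ud Y'$ and applying Cauchy--Schwarz against $(1+Y')^{-m}$ yields only $|\tilde u(x,Y)|\le C\|(1+Y')^m w(x,\cdot)\|_{L^2_Y}$, i.e.\ an $L^\infty_Y$ bound, not an $L^2_Y$ bound. Since $\tilde u(x,Y)\to c(x):=\int_0^\infty w(x,Y')\,\ud Y'$ as $Y\to\infty$, one has $\tilde u\in L^2((0,L)\times(0,\infty))$ if and only if $c\equiv 0$, and nothing in your $w$-estimates forces this. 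The same obstruction then propagates to the Hardy step, because $\|\p_x^n(\tilde v/Y)\|_{L^2_Y}\le 2\|\p_x^{n+1}\tilde u\|_{L^2_Y}$ presupposes exactly the unweighted bound on $\tilde u_x$ that you have not yet established. One way to close this is to argue directly from the original equation (not the $w$-equation): from $-Y\tilde v_Y+\tilde v=\tilde F+w_Y$ one sees that $\tilde v/Y$ has a limit at infinity, and an additional moment identity or a separate energy estimate (for instance testing the original equation against $\tilde v$ and using the $w$-bounds on $\tilde u_Y$) is needed to show this limit, and hence $c'(x)$, vanishes. Some such ingredient is essential; it cannot be replaced by Cauchy--Schwarz on $w$ alone.

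\textbf{The smaller imprecision.} The sentence ``the $Y$-factor in $Y\p_xw$ absorbs the indefinite sign'' is not the correct mechanism for handling the boundary contribution at $Y=0$ in the $w_x$-multiplier identity: the coefficient $Y$ vanishes there and absorbs nothing. What actually happens is that, evaluating the original (homogenised) equation at $Y=0$, one reads off the Neumann relation $w_Y(x,0)=-\tilde F(x,0)$; the boundary term $w_Y(x,0)\,w_x(x,0)$ then cancels exactly against the boundary term obtained by integrating the right-hand side $\int_0^\infty\tilde F_Y\,w_x\,\ud Y$ once by parts. This also shows that the $w$-problem carries a hidden boundary condition at $Y=0$, contrary to your assertion that none is needed.
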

\begin{remark} The main difference between the proofs of Lemma \ref{A.lemma1} and  Lemma \ref{A.lemma2} lies in that one does not
have the decay of $v$ as $Y\rightarrow\infty$ in Lemma \ref{A.lemma2}.
\end{remark}
\bigskip

\noindent\textbf{Acknowledgements.} Chunhui Zhou would like to thank Prof. Yan Guo for many fruitful discussions.
The research
of Song Jiang is supported by National Key R$\&$D Program (2020YFA0712200), National Key Project
(GJXM92579), and the Sino-German Science Center (Grant No.
GZ 1465) and the ISF¨CNSFC joint research program (Grant No. 11761141008).
  Chunhui Zhou is supported by the NSFC under the contract 11871147 and the Zhishan scholarship of Southeast University.

\end{document}